\newtheorem{theorem}{Theorem}[section]
\newtheorem{corollary}[theorem]{Corollary}
\newtheorem{lemma}[theorem]{Lemma}
\newtheorem{proposition}[theorem]{Proposition}
\theoremstyle{definition}
\newtheorem{remark}[theorem]{Remark}
\titleformat{\section}[block]{\scshape\filcenter}{\thesection.}{3pt}{}
\titleformat{\subsection}[block]{\scshape\filcenter}{\thesubsection.}{3pt}{}
\title{\large{\textbf{A PLANARITY ESTIMATE FOR PINCHED SOLUTIONS OF MEAN CURVATURE FLOW}}}
\author{\textsc{\small KEATON NAFF}}
\date{}
\numberwithin{equation}{section}
\begin{document}
\maketitle

\abstract{We show that the blow-ups of compact solutions to the mean curvature flow in $\mathbb{R}^N$ initially satisfying the pinching condition $|A|^2 < c |H|^2$ for a suitable constant $c = c(n)$ must be codimension one.}

\section{Introduction}

In this note, we are interested in studying singularity formation in the mean curvature flow in higher codimension. There is little hope to classify singularity models in general, even in codimension one. However, for suitable assumptions on initial data, this problem becomes more feasible. For hypersurfaces, assumptions like mean-convexity, convexity, and two-convexity (\cite{Hui84},\cite{HS99},\cite{HS09}) strongly restrict the types of singularity models that can appear. In higher codimension, the second fundamental form is much more complicated and preserved curvature conditions for the mean curvature flow are, so far, relatively rare. Here, we are interested in a preserved curvature pinching condition discovered by Andrews and Baker in \cite{AB10}. Research in this setting thus far \cite{AB10},\cite{Ngu18} has suggested that singularity models for these pinched flows might always be codimension one. This behavior is quite interesting, as the original flow may be of arbitrarily high codimension. Colding and Minicozzi \cite{CM20} have recently shown that if the asymptotic shrinker of a singularity model is a multiplicity-one cylinder, then the solution must be codimension one. The multiplicity-one assumption is difficult to verify in practice because embedded initial data need not remain embedded in higher codimension, and so higher multiplicity can occur. Here we take an alternative approach and we prove that blow-ups are codimension one directly. 

The condition introduced by Andrews-Baker \cite{AB10} is a curvature pinching inequality $|A|^2 < c|H|^2$. Importantly, they proved this condition is preserved by the mean curvature flow if $c \leq \frac{4}{3n}$. Let us call a solution of the mean curvature flow satisfying $|A|^2 < c|H|^2$ a \textit{$c$-pinched flow}. The dimensional bound $\frac{4}{3n}$ is a technical constraint imposed by the proof that $c$-pinching is preserved. It is more natural to consider $c = \frac{1}{n-k}$ for $k \in \{1, \dots, n-1\}$.  In codimension one, the condition $|H| > 0$ is equivalent to mean-convexity and the condition $|A|^2 < \frac{1}{n-k}|H|^2$ implies $k$-convexity. Note that $\frac{1}{n-k} \leq \frac{4}{3n}$ if $n \geq 4k$. The mean curvature flow of convex and two-convex hypersurfaces has been studied in the foundational works of Huisken \cite{Hui84} and Huisken-Sinestrari \cite{HS99},\cite{HS09}. The results of \cite{AB10} and \cite{Ngu18} are extensions of these results established by Huisken and Sinestrari to higher codimension, assuming the stronger pinching condition. To contextualize our result, we must first discuss these. 

In the seminal paper \cite{Hui84} - which draws upon Hamilton's foundational work on the Ricci flow \cite{Ham82} - Huisken proved the mean curvature flow evolves closed convex hypersurfaces into spherical singularities. Using the techniques developed there - in particular, the delicate Stampacchia iteration - Andrews and Baker proved that the mean curvature flow in $\mathbb{R}^N$ will deform closed $n$-dimensional initial data satisfying 
\[
 |A|^2 < c_{1} |H|^2, \quad c_{1} := \min\Big\{ \frac{4}{3n}, \frac{1}{n-1} \Big\} = \begin{cases} \frac{4}{3n} & \text{if} \;\;n = 2, 3 \\ \frac{1}{n-1} & \text{if}\;\; n \geq 4 \end{cases}
\]
to a point in finite time. Moreover, the flow is asymptotic to a family of shrinking round spheres contained in some $(n+1)$-dimensional affine subspace of $\mathbb{R}^N$.

Because $\frac{4}{3n} \leq \frac{1}{n-1}$ for $n \leq 4$ and $c$-pinching is only preserved for $c \leq \frac{4}{3n}$, the previous result is the best currently possible if $n \leq 4$. Suppose now we have a closed initial submanifold satisfying the weaker pinching condition studied by Nguyen \cite{Ngu18}:
\[
|A|^2 < c_2|H|^2, \quad c_2 := \min\Big\{ \frac{4}{3n}, \frac{1}{n-2} \Big\} = \begin{cases} \frac{4}{3n} & \text{if} \;\;n = 5, 6, 7 \\ \frac{1}{n-2} & \text{if}\;\; n \geq 8 \end{cases}.
\]
In \cite{HS99} and \cite{HS09}, Huisken and Sinestrari established several very important a priori estimates for the study of flows of two-convex hypersurfaces. In particular, they establish a \textit{convexity estimate} for mean-convex flows, which shows blow-ups must be weakly convex, and a \textit{cylindrical estimate} for two-convex flows, which shows blow-ups must satisfy $|A|^2 \leq \frac{1}{n-1} H^2$. Also very important in their work is a \textit{pointwise derivative estimate} for the second fundamental form.  In higher codimension, we no longer have a notion of convexity, but the derivative and cylindrical estimates still make sense. In \cite{Ngu18}, Nguyen first proves a pointwise derivative estimate using the pinching condition, and then uses a blow-up argument to establish the cylindrical estimate for $c_2$-pinched flows. Specifically, his quantitative cylindrical estimate shows the following alternative: for a $c_2$-pinched flow, either the flow becomes everywhere $c_1$-pinched, hence spherical, or there are regions of the manifold which are becoming arbitrarily close to a codimension one round cylinder $S^{n-1} \times \mathbb{R}$ contained in $\mathbb{R}^{N}$ prior to the first singular time.

The result of \cite{Ngu18} leaves open the possibility that a potential ``cap" to a forming cylindrical singularity may not lie in the same $(n+1)$-dimensional subspace as the ``neck''. Presently, we will show that this does not occur and more generally that all high curvature regions prior to singularity formation must become nearly planar. Results of this type have been obtained before in the curve shortening flow. In \cite{Alt91}, Altschuler proved singularities of the curve shortening flow in $\mathbb{R}^3$ are always planar. 

Here is our setting. We suppose $n \geq 5$ and our initial data $M_0 = F_0(M)\subset \mathbb{R}^N$ is an $n$-dimensional, closed, immersed submanifold satisfying 
\[
|A|^2 < c_n|H|^2\quad c_n := \min\Big\{\frac{4}{3n}, \frac{3(n+1)}{2n(n+2)} \Big\} =  \begin{cases} \frac{3(n+1)}{2n(n+2)} & \text{if} \;\; n = 5, 6, \text{ or } 7 \\ \frac{4}{3n} &  \text{if} \;\; n \geq 8 \end{cases}.
\]
For $n = 5$ and $n =6$, the constant $c_n$ is strictly between $\frac{1}{n-1}$ and $\frac{4}{3n}$. This value of $c_n$ in these dimensions is the largest we can allow in our new estimates in the proof of our main theorem below. For $n \geq 7$, we have $\frac{3(n+1)}{2n(n+2)} \geq \frac{4}{3n}$, with equality when $n = 7$. The value of $c_n$ in higher dimensions is the largest allowed by estimates used in \cite{AB10} to prove that  $c$-pinching is preserved. We will use these estimates as well. Under these assumptions, we consider a maximal solution $M_t = F(M,t) \subset \mathbb{R}^N$, $t\in [0, T)$, to the mean curvature flow, where $T$ is the first singular time. 

For the purpose of studying planarity, we define a tensor $\hat A$ by
\begin{align}\label{def-hat-A}
\hat A(X, Y) := A(X, Y) - \frac{\langle A(X, Y), H \rangle}{|H|^2}  H,
\end{align}
for vector fields  $X$ and $Y$ tangent to $M_t$. Since $c_n |H|^2 > |A|^2 \implies |H| > 0$ for $t \in [0, T)$, this tensor is well-defined along the flow. Under suitable assumptions, $\hat A$ vanishes identically if and only if our submanifold is a hypersurface inside an $(n+1)$-dimensional affine subspace of $\mathbb{R}^N$. See Proposition \ref{whyhatA} in Section 2. Here is our main theorem:

\begin{theorem}\label{main}
Suppose $n \geq 5$ and $N > n+1$. Let $c_n = \min\{\frac{4}{3n}, \frac{3(n+1)}{2n(n+2)}\}$. Suppose $M_t = F(M,t) \subset \mathbb{R}^N$, $t \in [0, T)$, is a smooth family of $n$-dimensional, closed, immersed submanifolds evolving by mean curvature flow, which initially satisfies $|A|^2 < c_n |H|^2$. Then there are constants $\sigma > 0$ and $C < \infty$, depending only upon the initial submanifold $M_0$, such that
\[
|\hat A|^2 \leq C|H|^{2 -\sigma}  
\]
holds pointwise on $M \times [0, T)$.  
\end{theorem}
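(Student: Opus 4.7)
My plan is to follow the Stampacchia iteration method of Huisken \cite{Hui84} and Huisken-Sinestrari \cite{HS99}, adapted to high codimension by Andrews-Baker \cite{AB10} and Nguyen \cite{Ngu18}, applied to the auxiliary function $f_\sigma := |\hat A|^2 |H|^{\sigma-2}$ for a small $\sigma>0$ to be chosen. The conclusion $|\hat A|^2 \leq C |H|^{2-\sigma}$ is exactly $\sup f_\sigma \leq C$, and this will follow once I produce an evolution inequality for $f_\sigma$ whose reaction term can be absorbed into a good gradient term via a Michael-Simon-Sobolev inequality at sufficiently high integrability.

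First I would derive the evolution equation for $\hat A$. The evolution equations for $A$ and $H$ are well known: schematically $\nabla_t A = \Delta^\perp A + A \ast A \ast A$ and $\nabla_t H = \Delta^\perp H + A \ast A \ast H$. Applying the product rule to $\hat A = A - \langle A, H \rangle H / |H|^2$ produces
\[
\nabla_t \hat A = \Delta^\perp \hat A + Q(\hat A, A, H) + L(\nabla A, \nabla H),
\]
where $Q$ is quadratic in $A$ and linear in $\hat A$, and $L$ collects first-order coupling terms coming from differentiating $H/|H|^2$. Tracing yields
\[
\partial_t |\hat A|^2 \leq \Delta |\hat A|^2 - 2|\nabla \hat A|^2 + C|A|^2 |\hat A|^2 + \text{(gradient coupling)},
\]
and the coupling must be handled using a Kato-type inequality. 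The specific constant $c_n = \min\{4/(3n), 3(n+1)/(2n(n+2))\}$ should enter here: its $n \in \{5,6,7\}$ branch appears to be calibrated so that a Kato estimate of the form $|\nabla \hat A|^2 \geq \alpha(n,c_n) |\hat A|^2 |\nabla H|^2 / |H|^2$ closes in the borderline dimensions where one cannot rely on Andrews-Baker's estimates alone.

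Next I would compute the evolution of $f_\sigma$ by the chain rule,
\[
\partial_t f_\sigma \leq \Delta f_\sigma + \frac{2(1-\sigma)}{|H|^2}\bigl\langle \nabla f_\sigma, \nabla |H|^2 \bigr\rangle + R_\sigma,
\]
where $R_\sigma$ collects the rescaled reaction together with the gradient remainder $|H|^{\sigma-2}\bigl(|\nabla \hat A|^2 - |\hat A|^2 |\nabla H|^2 / |H|^2\bigr)$ modulated by a $\sigma$-dependent coefficient. The goal is to show that, thanks to the pinching $|A|^2 < c_n |H|^2$ with $c_n$ as above, (i) the bad reaction is at worst $C \sigma^{-1} |A|^2 f_\sigma$ and (ii) the remaining gradient contribution is nonpositive after the Kato inequality, provided $\sigma$ is taken small enough. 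This is the most delicate step and the main obstacle: the interaction between $|\nabla A|$ and $|\nabla H|$ is subtler in higher codimension because the normal bundle curvature contributes to the Simons-type identity, and the pinching constant must be used both to dominate the reaction and to ensure the Kato estimate closes.

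Finally, given such an evolution inequality, I would run a Stampacchia iteration in the style of \cite{AB10}: test the inequality against $(f_\sigma - k)_+^{p-1}$ for threshold $k$ and large $p$, integrate in space-time, and apply the Michael-Simon Sobolev inequality on each slice $M_t$. By the pinching, the bad reaction $\int |A|^2 (f_\sigma - k)_+^p$ is bounded by $c_n \int |H|^2 (f_\sigma - k)_+^p$, and the $|H|^2$ factor is absorbable against $|\nabla (f_\sigma - k)_+^{p/2}|^2$ via Sobolev and Hölder once $p$ is chosen sufficiently large depending on $\sigma$ and $c_n$. A De Giorgi iteration in $k$ then promotes $L^p$ control into the pointwise bound $\sup_{M \times [0,T)} f_\sigma \leq C$, establishing the theorem with constants depending on the initial data through the initial pinching gap and the $L^p$ norm that starts the iteration.
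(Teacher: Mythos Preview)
Your proposal has a genuine gap in two related places: the choice of comparison function and the handling of the gradient coupling.

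You compare $|\hat A|^2$ against $|H|^{2-\sigma}$, but the paper compares against $f^{1-\sigma}$ where $f := c_0|H|^2 - |A|^2$. This is not a cosmetic difference. The evolution of $f$ contributes the Bochner term $|\nabla^\perp A|^2 - c_0|\nabla^\perp H|^2$, which contains a full $|\nabla^\perp A|^2$; the evolution of $|H|^2$ only contributes $|\nabla^\perp H|^2$. The extra strength of $|\nabla^\perp A|^2$ is precisely what is needed to absorb the troublesome first-order term in the evolution of $|\hat A|^2$, namely
\[
4\,Q_{ijk}\,\langle \hat A_{ij}, \nabla_k^\perp \nu_1\rangle, \qquad Q_{ijk} = \langle \nabla_k^\perp \mathring A_{ij}, \nu_1\rangle - \langle \nabla_k^\perp \hat A_{ij}, \nu_1\rangle - |H|^{-1}\mathring h_{ij}\nabla_k|H|.
\]
This term involves $\langle \nabla^\perp \mathring A, \nu_1\rangle$, not merely $\nabla|H|$, and the paper shows (Lemmas~4.8--4.10) that controlling it requires a term of the form $\tfrac{|\hat A|^2}{f}\,|\langle \nabla^\perp \mathring A, \nu_1\rangle|^2$, which one only obtains from the $|\nabla^\perp A|^2$ piece of the Bochner term for $f$. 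A Kato inequality relating $|\nabla \hat A|^2$ to $|\hat A|^2|\nabla H|^2/|H|^2$ does not produce this; the relevant Codazzi-improved inequalities split along the decomposition $NM = E_1 \oplus \hat E$ and yield \eqref{final-improved-1}--\eqref{final-improved-perp}, which are of a different shape than the Kato estimate you invoke. With $|H|^{2-\sigma}$ as comparison, the scale-invariant case $\sigma=0$ does not close at the gradient level, so the Stampacchia machinery never gets off the ground.

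Secondly, even had you chosen $f$, the Stampacchia iteration is unnecessary. Once the comparison with $f$ is in place and the $Q$-term is absorbed (this is where the constant $\tfrac{3(n+1)}{2n(n+2)}$ enters, via the improved Codazzi estimate for $|H|^2|\nabla^\perp\nu_1|^2$), the paper obtains the pointwise inequality
\[
\Big(\frac{\partial}{\partial t} - \Delta\Big)\frac{|\hat A|^2}{f} \;\leq\; 2\Big\langle \nabla \frac{|\hat A|^2}{f}, \nabla \log f\Big\rangle \;-\; \delta\,\frac{|\hat A|^2}{f^2}\Big(\frac{\partial}{\partial t} - \Delta\Big)f,
\]
with $(\partial_t-\Delta)f \geq 0$. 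Taking $\sigma=\delta$ then gives $(\partial_t-\Delta)\big(|\hat A|^2/f^{1-\sigma}\big) \leq$ a pure gradient term, and the estimate follows from the parabolic maximum principle directly---no $L^p$ iteration, no Michael--Simon, no De Giorgi.
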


Together with Proposition \ref{whyhatA} below, this result shows that at the first singular time, blow-ups must be codimension one. We show this in Proposition \ref{singularities}. In particular, an estimate for $|\hat A|/|H|$ is a measure of how far our submanifold is from being planar. The result above shows that the mean curvature flow preserves near-planarity under the $c_n$-pinching assumption. Since $\frac{1}{n-2} \leq \frac{4}{3n}$ for $n \geq 8$, the $c_2$-pinching condition considered in \cite{Ngu18} is included in the theorem above, at least when $n \geq 8$. Our result also applies for weaker pinching constants of the form $c = \frac{1}{n-k}$ if $n \geq 4k$. These weaker pinching constraints will allow a wider range of singularities models and our result shows these must also be codimension one.

Let $\tilde c_2 := \min\{\frac{3(n+1)}{2n(n+2)}, \frac{1}{n-2}\} = \min\{c_2, \frac{3(n+1)}{2n(n+2)}\}$. For $\tilde c_2$-pinched flows, the planarity estimate and Nguyen's cylindrical and derivative estimates all hold. Consequently, for $\tilde c_2$-pinched flows, using a recent classification result of Brendle-Choi \cite{BC19},\cite{BC20}, it is possible to give a complete classification of singularity models at the first singular time. The pinching constant and, consequently, the classification are sharp when $n \geq 8$. We can prove the following corollary by establishing the noncollapsing of singularity models. The proof is an adaptation of the work of Huisken-Sinestrari \cite{HS09}. The details can be found in \cite{Naf19}.

\begin{corollary}
Suppose $n \geq 5$ and $N > n+1$. Let $\tilde c_2 = \min\{\frac{3(n+1)}{2n(n+2)}, \frac{1}{n-2}\}$. Consider a family of closed $n$-dimensional submanifolds in $\mathbb{R}^{N}$ evolving by mean curvature flow which initially satisfy $|A|^2 < \tilde{c}_2 |H|^2$. At the first singular time, the only possible blow-up limits are codimension one shrinking round spheres, shrinking round cylinders, and translating bowl solitons. 
\end{corollary}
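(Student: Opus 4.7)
The plan is to combine Theorem \ref{main} with three additional ingredients --- Nguyen's derivative and cylindrical estimates, a Huisken-Sinestrari-type convexity estimate, and a noncollapsing estimate --- to show that every blow-up limit at the first singular time is a smooth, complete, codimension-one, weakly convex, uniformly two-convex, noncollapsed ancient mean curvature flow in $\mathbb{R}^{n+1}$. The classification will then follow from the Brendle-Choi theorems \cite{BC19}, \cite{BC20}.

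In more detail, since $\tilde c_2 \leq c_n$ for all $n \geq 5$, the hypothesis implies $c_n$-pinching, so Theorem \ref{main} and Proposition \ref{singularities} apply: every blow-up limit $M^\infty_t$ lies in some $(n+1)$-dimensional affine subspace of $\mathbb{R}^N$ and may be regarded as a smooth ancient mean curvature flow of hypersurfaces in $\mathbb{R}^{n+1}$. Smoothness of the limit in the Cheeger-Gromov sense is supplied by Nguyen's pointwise derivative estimate, and Nguyen's cylindrical estimate \cite{Ngu18}, passed through the parabolic rescaling, forces $|A|^2 \leq \tfrac{1}{n-1}|H|^2$ on $M^\infty_t$; in codimension one with $|H|>0$, this is equivalent to uniform two-convexity.

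Next, I would establish weak convexity of the limit by proving a convexity estimate of Huisken-Sinestrari type, namely that for every $\varepsilon>0$ there is $C(\varepsilon)<\infty$ with $\lambda_1 \geq -\varepsilon |H| - C(\varepsilon)$ along the original flow, where $\lambda_1$ is the smallest principal curvature of the hypersurface-like part of $A$ picked out by the direction $H/|H|$. This is proved via Stampacchia iteration applied to a suitable truncated function of $\lambda_1/|H|$, with the pinching hypothesis and the $\hat A$-bound from Theorem \ref{main} used to absorb the contributions from the normal bundle that do not appear in the codimension-one setting. I would then adapt the argument of Huisken-Sinestrari \cite{HS09} to show noncollapsing: there exists $\delta>0$ such that a suitable notion of inscribed radius is at least $\delta/|H|$ at every point of the flow. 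This is the main technical obstacle, because in higher codimension the classical definition of inscribed radius via an exterior unit normal breaks down; one must use the planarity estimate to identify an approximate normal direction and absorb the resulting error terms (controlled by $|\hat A|/|H|$) in the two-point maximum principle argument that drives the inscribed-radius bound. The detailed execution is carried out in \cite{Naf19}.

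With codimension one, weak convexity, uniform two-convexity, and noncollapsing all in hand, $M^\infty_t$ satisfies the hypotheses of the Brendle-Choi classification theorems \cite{BC19}, \cite{BC20}, which force $M^\infty_t$ to be a shrinking round sphere, a shrinking round cylinder $S^{n-1}\times\mathbb{R}$, or a translating bowl soliton. This yields the corollary.
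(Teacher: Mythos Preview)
Your proposal is correct and follows essentially the same approach as the paper: the paper itself gives only a brief sketch---planarity from Theorem~\ref{main}, Nguyen's derivative and cylindrical estimates, noncollapsing via an adaptation of Huisken--Sinestrari \cite{HS09}, and then the Brendle--Choi classification \cite{BC19},\cite{BC20}---and defers the details to \cite{Naf19}. Your outline fills in precisely these ingredients (adding the convexity estimate explicitly, which is also needed for Brendle--Choi and is implicit in the Huisken--Sinestrari machinery), and you likewise point to \cite{Naf19} for the technical execution.
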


The structure of this article is as follows. In Section 2, we record useful notation and standard identities for the higher codimension mean curvature flow. We also show that if $\hat A$ vanishes, then the submanifold is codimension one. In Section 3, we derive the evolution equation for $|\hat A|^2$. In Section 4, we prove Theorem 1.1 via the maximum principle. 

There is a connection between the mean curvature flow of convex and two-convex hypersurfaces and the Ricci flow of initial data with positive isotropic curvature (PIC). Positive isotropic curvature was introduced by Micallef and Moore \cite{MM88} for the study of minimal two-spheres and has been studied in the Ricci flow since Hamilton's fundamental paper \cite{Ham97}. If a submanifold $M \subset \mathbb{R}^N$ satisfies $|A|^2 < \frac{1}{n-2}|H|^2$, or if $M$ is a two-convex hypersurface, then the induced metric on $M$ has positive isotropic curvature. Consequently, if $M$ satisfies $|A|^2 < \frac{1}{n-1}|H|^2$, or if $M$ is a convex hypersurface, then the induced metric on $M \times \mathbb{R}$ has positive isotropic curvature (this property is called PIC1). In \cite{Bre08}, Brendle showed that the Ricci flow evolves PIC1 initial data into round spheres. As for PIC initial data, Hamilton's breakthrough in \cite{Ham97} was to show that in dimension four the Ricci flow of PIC initial data only develops neck-pinch singularities. The study of PIC initial data for the Ricci flow in higher dimensions $(n \geq 12)$ has recently been addressed by Brendle in \cite{Bre19}. Although the results for the Ricci flow and the mean curvature flow in these contexts are very similar, it is interesting that the proofs have tended to be somewhat different.

\textbf{Acknowledgements.} I am very grateful to my advisor, Simon Brendle, for his guidance and for helpful comments on an earlier version of this paper. I would also like to thank the anonymous referees for their comments, which have improved the clarity of this paper. This project was supported by the National Science Foundation under grant DMS-1649174. 

\section{Notation and Preliminaries on the MCF in Higher Codimension} 
In this section we will record notation, identities, and results that we will use in the proof of our theorem. We suppose we are given a solution of mean curvature flow $M_t = F(M,t) \subset \mathbb{R}^N$, where $M$ is an abstract manifold and $F: M \times [0, T) \to \mathbb{R}^N$ is a smooth familiy of parametrizations satisfying $\partial_t F = H$. Here $H$ denotes the mean curvature vector, while $g$ and $A$ denote the metric and the second fundamental form. We let $TM$ and $NM$ denote the (time-dependent) tangent and normal bundles of $M$ and recall both bundles are subbundles of $M \times \mathbb{R}^N \cong F^\ast T\mathbb{R}^N$. 

We let $D$ denote the Euclidean derivative on $F^\ast T\mathbb{R}^N$ and let $\nabla_X Y= (D_XY)^\top$ and $\nabla^\perp_X \nu = (D_X \nu)^\perp$ denote the induced connections on $TM$ and $NM$. It is possible to view the second fundamental form as either a section of $T^\ast M \otimes T^\ast M \otimes NM$ or a section of $T^\ast M \otimes T^\ast M \otimes F^\ast T\mathbb{R}^N$. By a minor abuse of the notation, we let $\nabla^\perp$ denote the induced connection on the former bundle and $\nabla$ denote the induced connection on the latter bundle. We can similarly view $H$ in either $NM$ or $F^\ast T \mathbb{R}^N$, and so we distinguish $\nabla^\perp H$ and $D H$. We will do our computations in local coordinates on $M$. For a fixed point $p_0 \in M$ and a fixed time $t_0 \in [0, T)$, we consider normal coordinates $(x_1, \dots, x_n)$ around $p$ such that $e_i = \partial_{x_i}|_{p_0}$ is an orthonormal basis of $T_{p_0}M$ and $\nabla_i \, \partial_{x_j}|_{p_0} = \nabla_{e_i} \partial_{x_j}|_{p_0} = 0$ at time $t_0$. We will use latin indices $i, j, k, \dots$ to indicate tangential components of tensors. We will not make use of the natural space-time connection for bundles over $M \times [0, T)$, although one certainly could.

We use Einstein summation notation: repeated Latin indices in multiplied tensor components will by default indicate summation from $1$ to $n$. Sometimes we will include the summation symbol to emphasize its presence. Since we work with an orthonormal basis we will raise and lower indices freely (except those of the metric tensor). For example, for $(0,2)$-tensors $T$ and $S$, we allow ourselves to write
\[
T_{ik} S_{jk} = T_{ik} S_j{}^k= g^{kl} T_{ik} S_{jl} = \sum_{k =1}^n T_{ik} S_{jk}. 
\]
Since we do not use a covariant time derivative, it is important to keep track of the metric when differentiating in time. We recall that in higher codimension the evolution equations for the metric and its inverse are
\begin{align}
 \frac{\partial}{\partial t} g_{ij} &= -2 \langle A_{ij}, H \rangle, \\
\label{evolution-of-inverse} \frac{\partial}{\partial t} g^{ij} &= 2 \langle A^{ij}, H \rangle = 2\langle A_{ij}, H \rangle.  
\end{align}
Sometimes, such as above, we will not use indices for the components of tensors taking values in the normal bundle. We will use the inner product $\langle \cdot\, , \cdot \rangle$ to indicate summation over normal directions. Other times, we will use Greek indices $\alpha, \beta, \gamma, \dots$ to indicate normal components of tensors. In these instances, repeated Greek indices will \textit{usually} indicate summation from $1$ to $N-n$. However, often we will only sum from $2$ to $N-n$ and in these cases, we will include a summation symbol to emphasize that.  To illustrate our convention, suppose $\nu_1, \dots, \nu_{N-n}$ is local orthonormal frame for the normal bundle. Then $A_{ij\alpha} = \langle A(e_i, e_j), \nu_\alpha\rangle$ and 
\[
|\langle A_{ij}, A_{kl} \rangle|^2 = \bigg| \sum_{\alpha = 1}^{N-n} A_{ij\alpha} A_{kl\alpha}\bigg|^2 = \langle A_{ij}, A_{kl} \rangle\langle A_{ij}, A_{kl} \rangle = \sum_{i,j,k,l =1}^n \sum_{\alpha, \beta = 1}^{N - n} A_{ij\alpha} A_{kl\alpha}A_{ij\beta}A_{kl\beta}.
\]
While the meaning of the left-hand side is perhaps a bit less obvious, it has the advantage making our computation much more succinct. For the norm of traced tensors, summation will always take place inside the norm. For example, 
\[
|\langle A_{ik}, A_{jk} \rangle |^2 = \bigg|\sum_{k=1}^n \langle A_{ik}, A_{jk} \rangle\bigg|^2 =  \langle A_{ik}, A_{jk} \rangle \langle A_{il}, A_{jl} \rangle = \sum_{i,j,k,l =1}^n \langle A_{ik}, A_{jk} \rangle \langle A_{il}, A_{jl} \rangle.
\]

We call $|H|$ the \textit{scalar mean curvature}. Since we assume $c_n|H|^2 > |A|^2 \implies |H| >  0$, we can define 
\begin{equation}
\label{principal-normal} \nu_1 := \frac{H}{|H|}
\end{equation}
to be the \textit{principal normal direction}. Of course, $ |\nu_1| = 1$. From now on, $\nu_1$ will always denote the vector defined in \eqref{principal-normal} and $\nu_2, \dots, \nu_{N-n}$ will denote a local frame of normal vectors orthogonal to $\nu_1$. Having defined the principal direction, the $(0,2)$-tensor 
\begin{equation}
h_{ij} := \langle A_{ij}, \nu_1 \rangle = A_{ij1}
\end{equation}
 is the \textit{principal component of the second fundamental form}. This is the only nonzero component of the second fundamental form if our submanifold happens to be codimension one. 
 With this notation, we can express $\hat A$, defined in \eqref{def-hat-A}, as 
\begin{equation} 
 \hat A_{ij} = \sum_{\alpha =2}^{N-n} A_{ij\alpha}\nu_\alpha.
\end{equation}
Note $\hat A_{ij\alpha} = A_{ij\alpha}$ for $\alpha \geq 2$. Moreover, we have the identities
\begin{align}
\label{decomposition-of-A} A_{ij} &= \hat A_{ij} + h_{ij} \nu_1 = \hat A_{ij} + \mathring h_{ij} \nu_1 + \frac{1}{n} |H| g_{ij} \nu_1, \\
 H & = |H| \nu_1, \\
|A|^2 &= |\hat A|^2 + |\mathring h|^2 + \frac{1}{n} |H|^2.
\end{align}
We will often use that $g^{ij}\hat A_{ij} = 0$ as well as the obvious orthogonality relations
\begin{equation}
\label{orthgonality-relations} \langle \hat A_{ij}, \nu_1 \rangle = \langle \nabla_k^\perp \nu_1, \nu_1 \rangle = 0. 
\end{equation}
 \begin{remark}\label{notation-difference}
Unfortunately, the notation used here, in \cite{AB10}, and in \cite{Ngu18} is slightly different. For the reader who has \cite{AB10} or \cite{Ngu18} on hand, here is how to translate: the full second fundamental form, denote by $A$ in this paper, is denoted by $h$ in \cite{AB10} and $A$ in \cite{Ngu18}. The principal component of the second fundamental form, denoted by $h$ in this paper, is denoted by $h_1$ in \cite{AB10} and $A_1$ in \cite{Ngu18}. Finally, the tensor $\hat A$ in this paper is denoted by $h_-$ in \cite{AB10} and $A_-$ in \cite{Ngu18}.  
\end{remark}

The curvature and normal curvature are denoted by $R$ and $R^\perp$ respectively, and our sign convention is that 
\begin{align*}
R(X,Y)Z &= \nabla_Y \nabla_X Z -  \nabla_X \nabla_Y Z - \nabla_{[Y, X]} Z, \\
R^\perp(X, Y)\nu &= \nabla^\perp_Y \nabla^\perp_X \nu -  \nabla^\perp_X \nabla^\perp_Y \nu - \nabla^\perp_{[Y, X]} \nu. 
\end{align*}
In higher codimension, the fundamental Gauss, Codazzi, and Ricci equations in Euclidean space in a local frame take the form
\begin{align}
\label{gauss} R_{ijkl} &= \langle A_{ik}, A_{jl} \rangle - \langle A_{il}, A_{jk} \rangle, \\
\label{codazzi} \nabla_i^\perp A_{jk} &= \nabla_j^\perp A_{ik}, \\
\label{ricci} R^\perp_{ij\alpha\beta} &= \sum_{k=1}^n (A_{ik\alpha} A_{jk\beta}  - A_{jk\alpha} A_{ik\beta}).  
\end{align}
We also define a vector-valued version of the normal curvature by 
\begin{equation*}
R^\perp_{ij}(\nu_\alpha) = R^\perp_{ij\alpha\beta} \nu_\beta = A_{ik\alpha} A_{jk} - A_{jk\alpha} A_{ik}.
\end{equation*}
In particular, we note that $R^\perp_{ij}(\nu_1) = h_{ik}A_{jk} - h_{jk}A_{ik}$, which in view of \eqref{decomposition-of-A} gives
\begin{equation}
\label{normal-curvature-in-principal-direction} R^\perp_{ij}(\nu_1) = \mathring h_{ik} \hat A_{jk} - \mathring h_{jk}\hat A_{ik}. 
\end{equation}
For $\alpha \geq 2$, we have 
\begin{align*}
R^\perp_{ij}(\nu_{\alpha}) & = \hat A_{ik\alpha}(\mathring h_{jk}\nu_1 + \hat A_{jk}) - \hat A_{jk\alpha}(\mathring h_{ik} \nu_1 + \hat A_{jk}) \\
& = -\langle R^\perp_{ij}(\nu_1), \nu_{\alpha}\rangle \nu_1 + \hat A_{ik\alpha} \hat A_{jk} - \hat A_{jk\alpha}\hat A_{ik}.
\end{align*} 
To summarize, for $\alpha, \beta \in \{1, \dots, N-n\}$, we have
\begin{equation}
\label{normal-curvature-components} R^\perp_{ij\alpha\beta} = 
\begin{cases} 
0 & \alpha = \beta \\
\mathring h_{ik} \hat A_{jk\beta} - \mathring h_{jk}\hat A_{ik\beta} & \alpha = 1,\; \beta \geq 2\\
\mathring h_{jk} \hat A_{ik\alpha} - \mathring h_{ik}\hat A_{jk\alpha}  & \alpha \geq 2, \;\beta = 1 \\
\hat A_{ik\alpha} \hat A_{jk\beta}  - \hat A_{jk\alpha} \hat A_{ik\beta} & \alpha, \, \beta\geq 2
\end{cases}.
\end{equation}

We define a new connection for the orthogonal decomposition of $NM = E_1 \oplus \hat E$ where $\hat E$ consists of normal vectors $\hat \nu$ which are everywhere orthogonal to $\nu_1$, $\langle \hat \nu, \nu_1 \rangle = 0$, and $E_1 = C^\infty(M)\nu_1$. Define $\hat \nabla^{\perp}$ on $\hat E$ by 
\begin{equation}
\hat \nabla_i^\perp  \hat \nu := \nabla_i^\perp \hat \nu - \langle \nabla_i^\perp \hat \nu, \nu_1 \rangle \nu_1.
\end{equation}
Since, by definition, $\hat A$ is a section of $T^\ast M\otimes T^\ast M \otimes \hat E$, it is natural to define the connection $\hat \nabla^\perp$ on $\hat A$ by 
\begin{equation}
\hat \nabla_i^\perp\hat A_{jk} := \nabla_i^\perp \hat A_{jk} - \langle \nabla_i^\perp \hat A_{jk}, \nu_1 \rangle \nu_1. 
\end{equation}
For clarity, let us summarize the relationship between the derivatives $D,\nabla, \nabla^\perp$, and $\hat \nabla^\perp$ acting on $A, H$, and $\hat A$. If we view $A_{jk}$ and $H$ as taking values in $F^\ast T\mathbb{R}^N$, then we can decompose $\nabla_i A_{jk}$ and $D_i H$ into the tangential and normal components to get
\begin{align}
\nonumber \nabla_i A_{jk} &= (\nabla_i A_{jk})^\perp + (\nabla_i A_{jk})^\top \\
 & = \nabla_i^\perp A_{jk} - \sum_{l =1}^n \langle A_{jk}, A_{il} \rangle e_l, \\
\nonumber D_i H &= (D_i H)^\perp + (D_i H)^\top\\
\label{euclidean-deriviative-H}  & = \nabla_i^\perp H - \sum_{j = 1}^n \langle H, A_{ij} \rangle e_j. 
\end{align}
Similarly, and more relevant to the coming computations, if we view $A_{jk}$, $H$, and $\hat A_{jk}$ as taking values in the normal bundle, then we can decompose $\nabla_i^\perp A_{jk}$, $\nabla^\perp_i H$, and $\nabla^\perp_i \hat A_{jk}$ using $NM = E_1 \oplus \hat E$ to get 
\begin{align}
\label{derivative-of-A}\nabla_i^\perp A_{jk} & = \big(\hat \nabla_i^\perp \hat A_{jk} + h_{jk}\nabla_i^\perp \nu_1\big) + \big(\langle \nabla_i^\perp \hat A_{jk}, \nu_1 \rangle + \nabla_i h_{jk}\big)\nu_1,\\
\label{derivative-of-H}\nabla_i^\perp H &= |H| \nabla_i^\perp \nu_1+ (\nabla_i |H|) \nu_1,\\
\nabla_i^\perp \hat A_{jk} &= \hat \nabla_i^\perp \hat A_{jk} + \langle \nabla_i^\perp \hat A_{jk}, \nu_1 \rangle \nu_1.
\end{align}
Note that $\nabla_i^\perp A_{jk}$ is shorthand for $(\nabla_i^\perp A_{jk\alpha})\nu_\alpha$. As a direct consequence of the orthogonality relations, we have
\begin{proposition}[Decomposition of derivatives]\label{decompgrad}
\begin{align}
\label{norm-of-derivative-of-A} |\nabla^\perp A|^2& = |\hat \nabla_i^\perp \hat A_{jk} + h_{jk}\nabla_i^\perp \nu_1|^2 + |\langle \nabla_i^\perp \hat A_{jk}, \nu_1 \rangle + \nabla_i h_{jk}|^2.\\
\label{nabla-perp-H-squared-formula}|\nabla^\perp H|^2 &= |H|^2 |\nabla^\perp \nu_1|^2+ |\nabla |H||^2.\\
\label{norm-of-derivative-of-hat-A} |\nabla^\perp \hat A|^2&= |\hat \nabla^\perp \hat A|^2 + |\langle \nabla^\perp \hat A, \nu_1 \rangle|^2.
\end{align}
\end{proposition}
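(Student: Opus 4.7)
The plan is to observe that each of the three identities reduces to the Pythagorean theorem applied in the orthogonal splitting $NM = E_1 \oplus \hat E$, together with the decompositions of $\nabla_i^\perp A_{jk}$, $\nabla_i^\perp H$, and $\nabla_i^\perp \hat A_{jk}$ already recorded in \eqref{derivative-of-A}, \eqref{derivative-of-H}, and the line just below. The only algebraic input needed beyond these decompositions is the elementary fact that $\nabla_i^\perp \nu_1 \in \hat E$; this follows by differentiating $|\nu_1|^2 = 1$ to obtain $\langle \nabla_i^\perp \nu_1, \nu_1\rangle = 0$, which is recorded in \eqref{orthgonality-relations}.

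First I would handle \eqref{norm-of-derivative-of-hat-A}, since it is the most transparent. By definition of $\hat \nabla^\perp$, the summand $\hat \nabla_i^\perp \hat A_{jk}$ is the $\hat E$-component of $\nabla_i^\perp \hat A_{jk}$ and $\langle \nabla_i^\perp \hat A_{jk}, \nu_1\rangle\, \nu_1$ is the $E_1$-component. Since these are pointwise orthogonal in the normal bundle, the pointwise Pythagorean identity gives the relation for the integrand, and summing the free tangential indices $i, j, k$ gives \eqref{norm-of-derivative-of-hat-A}.

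For \eqref{nabla-perp-H-squared-formula}, I would start from \eqref{derivative-of-H} and note that the two summands $|H|\nabla_i^\perp \nu_1$ and $(\nabla_i |H|)\nu_1$ lie in $\hat E$ and $E_1$ respectively, by the observation on $\nabla_i^\perp \nu_1$ above. Orthogonality kills the cross term in the squared norm and the remaining pieces simplify to $|H|^2 |\nabla_i^\perp \nu_1|^2$ and $(\nabla_i |H|)^2$; summing over $i$ produces the claimed identity.

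Finally, \eqref{norm-of-derivative-of-A} is proved in exactly the same way starting from \eqref{derivative-of-A}. The one point worth checking is that the first summand $\hat \nabla_i^\perp \hat A_{jk} + h_{jk}\, \nabla_i^\perp \nu_1$ lies in $\hat E$: its first term does by definition of $\hat \nabla^\perp$, and its second does because $\nabla_i^\perp \nu_1 \perp \nu_1$. The complementary piece $(\langle \nabla_i^\perp \hat A_{jk}, \nu_1\rangle + \nabla_i h_{jk})\, \nu_1$ is manifestly in $E_1$, so Pythagoras again yields the result. I do not anticipate any serious obstacle here; the proposition is essentially a bookkeeping exercise formalizing the orthogonality built into the decompositions preceding it.
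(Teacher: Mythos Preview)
Your proposal is correct and matches the paper's approach: the paper states the proposition ``as a direct consequence of the orthogonality relations'' \eqref{orthgonality-relations} and the decompositions \eqref{derivative-of-A}--\eqref{derivative-of-H}, without further argument. You have simply made explicit the Pythagorean step in the splitting $NM = E_1 \oplus \hat E$ that the paper leaves implicit.
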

\noindent We will use these identities in Sections 3 and 4.  

It is very useful to consider the implications of the Codazzi equation for the decomposition of $\nabla_i^\perp A_{jk}$ above. Projecting the Codazzi equation \eqref{codazzi} onto $E_1$ and $\hat E$ implies the both of the tensors
\begin{equation*}
\nabla_i h_{jk} + \langle \nabla^\perp_i \hat A_{jk}, \nu_1 \rangle\; \text{ and }\;\; \hat \nabla^\perp_i \hat A_{jk} + h_{jk} \nabla_i^\perp \nu_1
\end{equation*}
are symmetric in $i, j, k$. Consequently, it is equivalent to trace over $j, k$ or trace over $i, k$, and this implies
\begin{align}
\label{codazzi-1}\sum_{k=1}^n\nabla_k h_{ik} + \langle \nabla^\perp_k \hat A_{ik} , \nu_1 \rangle &= \nabla_i |H|, \\
\label{codazzi-perp}\sum_{k=1}^n\hat \nabla_k^\perp \hat A_{ik} + h_{ik} \nabla_k^\perp \nu_1 &=  |H| \nabla_i^\perp \nu_1.
\end{align}

Next, we review the evolution equations for $A$ and $H$ in higher codimension. We let $\frac{\partial}{\partial t}^\perp$ denote the projection of the time derivative onto the normal bundle and $\Delta^\perp$ denote the Laplacian with respect to the connection $\nabla^\perp$. 
\begin{proposition}[Evolution of $A$ and $H$]\label{evolution-equations}
With the summation convention, the evolution equations of $A_{ij}$ and $H$ are
\begin{align}
\label{evolution-of-A} \Big(\frac{\partial}{\partial t}^\perp - \Delta^\perp\Big)A_{ij} &=  - \langle H, A_{ik} \rangle A_{jk} - \langle H, A_{jk} \rangle A_{ik} +  \langle A_{ij},  A_{kl} \rangle A_{kl} \\
&\qquad - 2\langle A_{ik}, A_{jl} \rangle A_{kl} + \langle A_{ik}, A_{kl} \rangle A_{jl} + \langle A_{jl},  A_{kl} \rangle A_{ik}, \nonumber\\
\label{evolution-of-H} \Big(\frac{\partial}{\partial t}^\perp - \Delta^\perp\Big) H &=  \langle H, A_{kl} \rangle A_{kl}.
\end{align} 
The evolution equations of $|A|^2$ and $|H|^2$ are
\begin{align}
\label{evolution-of-|A|^2} \frac{\partial}{\partial t} |A|^2 &= \Delta|A|^2 - 2|\nabla^\perp A|^2 +  2|\langle A_{ij}, A_{kl} \rangle|^2 + 2|R^\perp|^2,\\
\label{evolution-of-|H|^2}\frac{\partial}{\partial t} |H|^2 &=\Delta |H|^2 - 2|\nabla^\perp H|^2 + 2 |\langle A_{ij}, H \rangle|^2.
\end{align}
\end{proposition}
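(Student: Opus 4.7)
The plan is to derive the evolution equation for $A$ first, then obtain the equation for $H$ by tracing, and finally derive the two scalar identities by the product rule combined with an integration-by-parts style computation of the ambient Laplacian.

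\textbf{Evolution of $A_{ij}$ and $H$.} In local coordinates around $p_0$ one has $A_{ij} = (D_i\partial_j F)^\perp$. Differentiating in $t$, using $\partial_t F = H$ and commuting $\partial_t$ with $D_i$, I would write $\partial_t^\perp A_{ij} = (D_iD_j H)^\perp$ plus corrections coming from the fact that the tangential projection itself varies with $t$; applying \eqref{euclidean-deriviative-H} twice converts $(D_iD_jH)^\perp$ into $\nabla^\perp_i\nabla^\perp_j H$ minus the terms $\langle H,A_{ik}\rangle A_{jk}$ and $\langle H,A_{jk}\rangle A_{ik}$. To pass from $\nabla^\perp_i\nabla^\perp_j H$ to $\Delta^\perp A_{ij}$ plus reaction terms, I would apply Codazzi \eqref{codazzi} to rewrite $\nabla^\perp_i\nabla^\perp_j H = \nabla^\perp_i\nabla^\perp_k A_{jk}$, then commute the two covariant derivatives. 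The commutator acts on $A_{jk}$ through the intrinsic Riemann tensor on the tangential indices (supplying, via \eqref{gauss}, the quartic term $\langle A_{ik},A_{kl}\rangle A_{jl}$ and $-2\langle A_{ik},A_{jl}\rangle A_{kl}$ plus its symmetrization) and through the normal curvature $R^\perp$ on the normal value (which, by \eqref{ricci}, contributes $\langle A_{ij},A_{kl}\rangle A_{kl}$-type terms). Reindexing and symmetrizing gives \eqref{evolution-of-A}. Tracing with $g^{ij}$, and using that $g^{ij}\nabla^\perp_i\nabla^\perp_j = \Delta^\perp$ at $p_0$, collapses the right-hand side because the three symmetric quartic terms cancel in pairs, leaving only $\langle H,A_{kl}\rangle A_{kl}$, which is \eqref{evolution-of-H}.

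\textbf{Evolution of $|A|^2$ and $|H|^2$.} Writing $|A|^2 = g^{ik}g^{jl}\langle A_{ij},A_{kl}\rangle$ and applying the product rule, I would use the formula \eqref{evolution-of-inverse} for $\partial_t g^{ij}$ together with the $\partial_t^\perp A_{ij}$ identity above. Four terms of the form $\langle A^{ij},H\rangle \langle A_{i}{}^k A_{jk}\rangle$ arise from differentiating the metrics; these exactly cancel the $\langle H,A_{ik}\rangle A_{jk}$-type terms inside $\partial_t^\perp A_{ij}$ after pairing with $A^{ij}$. On the Laplacian side, because the inner product on $NM$ is $\nabla^\perp$-parallel, one has $\Delta|A|^2 = 2\langle\Delta^\perp A_{ij},A^{ij}\rangle + 2|\nabla^\perp A|^2$, so combining with \eqref{evolution-of-A} produces exactly the reaction $2|\langle A_{ij},A_{kl}\rangle|^2 + 2|R^\perp|^2$, where the second summand is obtained by recognizing the algebraic expression coming from the $R^\perp$ commutator as $|R^\perp|^2$ via the Ricci equation \eqref{ricci}. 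The calculation for $|H|^2$ is identical but simpler: the only surviving reaction term is $2|\langle A_{ij},H\rangle|^2$, since there is no normal curvature contribution for a single normal-valued tensor with no free tangential indices.

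\textbf{Main obstacle.} The derivation is classical (essentially the higher-codimension Simons identity as used by Andrews-Baker), so no new ideas are required. The real difficulty is purely bookkeeping: one must track, simultaneously, the variation of $g^{ij}$, the splitting of Euclidean derivatives into tangential and normal parts (equations \eqref{derivative-of-A}-\eqref{derivative-of-H}), the commutator curvatures $R$ and $R^\perp$, and the symmetrizations produced by swapping indices via Codazzi. The payoff is that every non-$R^\perp$ curvature term on the right-hand side of \eqref{evolution-of-A} arises from commuting covariant derivatives on a tangential index, while the single term $\langle A_{ij},A_{kl}\rangle A_{kl}$ arises from the $R^\perp$ commutator; this asymmetry is what produces the clean split $2|\langle A_{ij},A_{kl}\rangle|^2 + 2|R^\perp|^2$ in \eqref{evolution-of-|A|^2}.
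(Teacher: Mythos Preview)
Your approach is essentially the paper's: compute $\partial_t^\perp A_{ij}=(D_iD_jH)^\perp$, derive the Simons identity via Codazzi and commutation, and then pass to the scalar equations by the product rule. Two bookkeeping points, however, are off and would cause trouble if you carried them through. First, $(D_iD_jH)^\perp$ yields only \emph{one} of the two $H$-terms, namely $-\langle H,A_{jp}\rangle A_{ip}$; the partner $-\langle H,A_{ip}\rangle A_{jp}$ arises inside the Simons identity when the Gauss equation is traced (it is the $-\langle A_{ip},A_{kl}\rangle A_{jp}$ term with $k=l$), not from applying \eqref{euclidean-deriviative-H} twice. Correspondingly, the term $\langle A_{ij},A_{kl}\rangle A_{kl}$ comes from the Gauss commutator on a tangential index, not from $R^\perp$; the $R^\perp$ commutator instead produces $\langle A_{jl},A_{kl}\rangle A_{ik}$ and half of the $-2\langle A_{ik},A_{jl}\rangle A_{kl}$ term. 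Second, when you trace with $g^{ij}$ to obtain \eqref{evolution-of-H} you must include the metric variation $(\partial_t g^{ij})A_{ij}=2\langle A_{ij},H\rangle A_{ij}$, just as you do for $|A|^2$; the bare trace of the right-hand side of \eqref{evolution-of-A} is $-\langle H,A_{kl}\rangle A_{kl}$, and it is only after adding this metric term that the sign flips to the correct $+\langle H,A_{kl}\rangle A_{kl}$.
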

\begin{proof}
The equations above have been derived in both \cite{Smo12} and \cite{AB10}, but under slightly different notation and conventions (see also Remark \ref{notation-difference}). For the convenience of the reader, we will give a derivation here.
 
The first step is to prove the important Simons' identity, by commuting second derivatives of the second fundamental form. To that end, by standard commutation identities we have
\[
\nabla^\perp_i \nabla^\perp_k A_{jl\alpha} = \nabla^\perp_k \nabla^\perp_i A_{jl\alpha} + R_{ikjp} A_{pl\alpha} + R_{iklp} A_{jp\alpha} + R^\perp_{ik\alpha\beta} A_{jl\beta}.
\]
Plugging in the Gauss equation \eqref{gauss} and Ricci equation \eqref{ricci}, we obtain
\begin{align*}
\nabla^\perp_i \nabla^\perp_k A_{jl\alpha} &= \nabla^\perp_k \nabla^\perp_i A_{jl\alpha} + \big(\langle A_{ij}, A_{kp} \rangle - \langle A_{ip}, A_{jk} \rangle\big) A_{pl\alpha}\\
& \quad +\big (\langle A_{il}, A_{kp} \rangle - \langle A_{ip}, A_{kl} \rangle\big)A_{jp\alpha} + \big(A_{ip\alpha}A_{kp\beta} - A_{ip\beta}A_{kp\alpha}\big) A_{jl\beta}. 
\end{align*}
Note that $(A_{ip\alpha}A_{kp\beta} - A_{ip\beta}A_{kp\alpha}) A_{jl\beta} = \langle A_{kp}, A_{jl} \rangle A_{ip\alpha} - \langle A_{ip}, A_{jl} \rangle A_{kp\alpha}$, so we may write
\begin{align*}
\nabla^\perp_i \nabla^\perp_k A_{jl}& =  \nabla^\perp_k \nabla^\perp_i A_{jl} + \big(\langle A_{ij}, A_{kp} \rangle - \langle A_{ip}, A_{jk} \rangle\big) A_{pl}\\
& \quad + \big(\langle A_{il}, A_{kp} \rangle - \langle A_{ip}, A_{kl} \rangle\big)A_{jp} + \langle A_{kp}, A_{jl} \rangle A_{ip} - \langle A_{ip}, A_{jl} \rangle A_{kp}.
\end{align*}
Using the Codazzi equation \eqref{codazzi} twice gives 
\begin{align*}
\nabla^\perp_i\nabla^\perp_j A_{kl}  &= \nabla^\perp_i \nabla^\perp_k A_{jl}\\
 & =  \nabla^\perp_k \nabla^\perp_l A_{ij} + \big(\langle A_{ij}, A_{kp} \rangle - \langle A_{ip}, A_{jk} \rangle\big) A_{pl}\\
& \quad + \big(\langle A_{il}, A_{kp} \rangle - \langle A_{ip}, A_{kl} \rangle\big)A_{jp} + \langle A_{kp}, A_{jl} \rangle A_{ip} - \langle A_{ip}, A_{jl} \rangle A_{kp}.
\end{align*}
Finally, if we trace over the indices $k$ and $l$, gather terms, and rearrange a bit, we obtain
\begin{align}
\label{Delta-A} \nabla^\perp_i\nabla^\perp_j H  & =  \Delta^\perp A_{ij} - \langle H, A_{ip}\rangle A_{jp}  + \langle A_{ij}, A_{pq} \rangle A_{pq}  \\
\nonumber & \quad - 2\langle A_{ip}, A_{jq} \rangle A_{pq} + \langle A_{iq},  A_{pq} \rangle A_{jp}+ \langle A_{jq}, A_{pq} \rangle A_{ip}.
\end{align}

Next, we compute $\partial_t^\perp A_{ij}$. To do so, let us denote $F_i = dF(\partial_{x_i})$ and recall that $\partial_t F = H$. In our notation we have $\partial_t F_i = D_i H$ and $[\partial_t, D_i] = 0$. Then
\[
\frac{\partial}{\partial t}^\perp A_{ij} = \frac{\partial}{\partial t}^\perp (D_{i} F_j)^\perp = \bigg( \frac{\partial}{\partial t} \Big( D_{i} F_j - g^{pq}\langle D_{i} F_j, F_p \rangle F_q \Big)\bigg)^\perp=  \big(D_{i}D_j H \big)^\perp, 
\]
where we have used that $D_{i}F_j = 0$ at the origin in our coordinates. Now by \eqref{euclidean-deriviative-H} we have $D_jH = \nabla^\perp_j H - \langle H,  A_{jp} \rangle F_p$. Differentiating and taking the normal projection, we obtain
\begin{align}
\label{partial-t-A} \frac{\partial}{\partial t}^\perp A_{ij} = \nabla^\perp_i \nabla^\perp_j H - \langle H, A_{jp}\rangle A_{ip}. 
\end{align}
Combining \eqref{Delta-A} and \eqref{partial-t-A} proves the claimed evolution equation for $A$. The evolution equation for $H$ is a straightforward consequence of the evolution equation for $A_{ij}$ and the identity
\[
\Big(\frac{\partial}{\partial t}^\perp - \Delta^\perp\Big)H = \Big(\frac{\partial}{\partial t} g^{ij}\Big)H + g^{ij} \Big(\frac{\partial}{\partial t}^\perp - \Delta^\perp\Big)A_{ij}, \\
\]
in view of \eqref{evolution-of-inverse}.

To obtain the evolution equation for $|A|^2 = g^{ij}g^{kl} \langle A_{ij}, A_{kl} \rangle$, we use \eqref{evolution-of-inverse} once more to get 
\[
\Big(\frac{\partial}{\partial t} - \Delta\Big)|A|^2 + 2|\nabla^\perp A|^2 = 2\Big\langle \Big(\frac{\partial}{\partial t}^\perp - \Delta^\perp\Big) A_{ij}, A_{ij} \Big\rangle  + 8\langle H, A_{ij} \rangle \langle H, A_{ij} \rangle. 
\]
Plugging in the evolution equation for $A_{ij}$, the terms of the form $|\langle H, A \rangle|^2$ cancel. Relabeling and organizing the indices a bit, we are left with 
\[
\Big(\frac{\partial}{\partial t} - \Delta\Big)|A|^2 + 2|\nabla^\perp A|^2 = 2|\langle A_{ij}, A_{kl} \rangle|^2 + 4\langle A_{ik}, A_{kl} \rangle \langle A_{ij}, A_{jl}\rangle - 4\langle A_{ik}, A_{jl} \rangle \langle A_{ij}, A_{kl}\rangle. 
\]
By the Ricci equation \eqref{ricci}, we have 
\begin{align*}
|R^\perp|^2 &= R^\perp_{ij\alpha\beta}R^\perp_{ij\alpha\beta} \\
& = \big(A_{ik\alpha}A_{jk\beta} - A_{jk\alpha}A_{ik\beta}\big)\big(A_{il\alpha}A_{jl\beta} - A_{jl\alpha}A_{il\beta}\big)\\
&= \langle A_{ik}, A_{il} \rangle \langle A_{jk},A_{jl} \rangle + \langle A_{jk}, A_{jl} \rangle \langle A_{ik}, A_{il} \rangle\\
& \quad - \langle A_{jk},A_{il}\rangle \langle A_{ik}, A_{jl}\rangle - \langle A_{ik}, A_{jl} \rangle \langle A_{jk}, A_{il} \rangle.
\end{align*}
After another relabeling of indices, we see that 
\[
2|R^\perp|^2 = 4\langle A_{ik}, A_{kl} \rangle \langle A_{ij}, A_{jl}\rangle - 4\langle A_{ik}, A_{jl} \rangle \langle A_{ij}, A_{kl}\rangle,
\]
which establishes the evolution equation for $|A|^2$. Finally, the evolution equation of $|H|^2$ follows easily from the evolution equation of $H$. 
\end{proof}

It will be useful to expand each of the reaction terms 
\begin{align*}
|\langle A_{ij}, H \rangle|^2 &= \langle A_{ij}, H \rangle\langle A_{ij}, H \rangle, \\
|\langle A_{ij}, A_{kl} \rangle|^2 &= \langle A_{ij}, A_{kl} \rangle \langle A_{ij}, A_{kl} \rangle,\\
|R^\perp_{ij}|^2 & = R^\perp_{ij\alpha\beta} R^\perp_{ij\alpha\beta},
\end{align*}
on the right-hand side of the evolution equations \eqref{evolution-of-|A|^2} and \eqref{evolution-of-|H|^2}, using the formula \eqref{decomposition-of-A}. It is straightforward to see that 
\begin{equation}
\label{decomp-1} \langle A_{ij}, H \rangle =|H| h_{ij} = \frac{1}{n}|H|^2g_{ij} + |H|\mathring h_{ij}.
\end{equation}
Similarly, 
\begin{align}
\nonumber \langle A_{ij}, A_{kl} \rangle & = h_{ij} h_{kl} + \langle \hat A_{ij}, \hat A_{kl} \rangle\\
\label{decomp-2} & = \frac{1}{n^2}|H|^2 g_{ij}g_{kl} + \frac{1}{n}|H|(g_{ij} \mathring h_{kl} +\mathring h_{ij} g_{kl} ) + \mathring h_{ij} \mathring h_{kl} + \langle \hat A_{ij}, \hat A_{kl} \rangle. 
\end{align}
As for the third reaction term, recalling \eqref{normal-curvature-components}, we have
\begin{align}
\nonumber |R^\perp|^2 &= \sum_{\beta =1}^{N-n} R^\perp_{ij1\beta} R^\perp_{ij1\beta} + \sum_{\alpha =1}^{N-n} R^\perp_{ij\alpha1} R^\perp_{ij\alpha1} + \sum_{\alpha, \beta = 2}^{N-n} R^\perp_{ij\alpha\beta}R^\perp_{ij\alpha\beta} \\
\label{decomp-3}& = 2|R^\perp_{ij}(\nu_1)|^2 +\sum_{\alpha, \beta = 2}^{N-n} \Big| (\hat A_{ik\alpha} \hat A_{jk\beta} - \hat A_{jk\alpha} \hat A_{ik\beta})\Big|^2.
\end{align}
Using \eqref{normal-curvature-in-principal-direction}, \eqref{decomp-1}, \eqref{decomp-2}, \eqref{decomp-3}, we obtain the following proposition for use in later sections.  
\begin{proposition}[Decomposition of reaction terms]\label{decompreac}
\begin{align}
\label{trace-and-tracless} |h|^2 &= |\mathring h|^2 + \frac{1}{n} |H|^2.\\
\label{A-dot-H} |\langle A_{ij}, H \rangle|^2 & = |H|^2 |h|^2. \\
\label{norm-of-A,A} |\langle A_{ij}, A_{kl} \rangle|^2 & = |h|^4 + 2\Big|\sum_{i,j=1}^n \mathring h_{ij} \hat A_{ij} \Big|^2 + |\langle \hat A_{ij}, \hat A_{kl} \rangle|^2.\\
\label{norm-of-normal-curvature}|R^\perp|^2 &= |\hat R^\perp|^2 + 2|R_{ij}^\perp(\nu_1)|^2,
\end{align}
where
\begin{align}
\label{norm-of-normal-curvature-in-principal-direction}|R_{ij}^\perp(\nu_1)|^2 &= \Big|\sum_{k=1}^n (\mathring h_{ik} \hat A_{jk} - \mathring h_{jk} \hat A_{ik}) \Big|^2,\\
\label{hat-R-perp} |\hat R^\perp|^2 &:= \sum_{\alpha, \beta =2}^{N-n} \Big|\sum_{k = 1}^n (\hat A_{ik\alpha} \hat A_{jk\beta } - \hat A_{jk\alpha} \hat A_{ik\beta})\Big|^2.
\end{align}
\end{proposition}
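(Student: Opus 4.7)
The plan is to establish the four identities one by one as direct algebraic consequences of the orthogonal decomposition $A_{ij} = \hat A_{ij} + h_{ij}\nu_1$ together with the tracefree decomposition $h_{ij} = \mathring h_{ij} + \frac{1}{n}|H|g_{ij}$, using repeatedly the orthogonality relations \eqref{orthgonality-relations} and the trace vanishing $g^{ij}\hat A_{ij} = 0$. The statement is essentially bookkeeping: the three preliminary computations \eqref{decomp-1}, \eqref{decomp-2}, \eqref{decomp-3} already isolate the raw decompositions of $\langle A_{ij}, H\rangle$, $\langle A_{ij}, A_{kl}\rangle$, and $|R^\perp|^2$, so the task reduces to squaring, collecting, and canceling.

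First I would handle \eqref{trace-and-tracless} as the standard orthogonal split of a symmetric 2-tensor into a tracefree part and a multiple of the metric: since $\operatorname{tr}_g\mathring h = 0$ and $\operatorname{tr}_g(g) = n$, the cross term $\frac{2}{n}|H|\, g^{ij}\mathring h_{ij}$ vanishes. The identity \eqref{A-dot-H} then follows immediately from \eqref{decomp-1}, which says $\langle A_{ij}, H\rangle = |H| h_{ij}$, so $|\langle A_{ij}, H\rangle|^2 = |H|^2\sum_{i,j}h_{ij}^2 = |H|^2|h|^2$.

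For \eqref{norm-of-A,A} I would square \eqref{decomp-2} in the form $\langle A_{ij},A_{kl}\rangle = h_{ij}h_{kl} + \langle \hat A_{ij},\hat A_{kl}\rangle$, yielding
\begin{equation*}
|\langle A_{ij},A_{kl}\rangle|^2 = |h|^4 + 2\sum_{i,j,k,l} h_{ij}h_{kl}\langle \hat A_{ij},\hat A_{kl}\rangle + |\langle \hat A_{ij},\hat A_{kl}\rangle|^2.
\end{equation*}
The middle term is the delicate one. Substituting $h_{ij} = \mathring h_{ij} + \frac{1}{n}|H|g_{ij}$ and expanding, every term containing a factor $g_{ij}$ or $g_{kl}$ collapses against $\hat A$ via $g^{ij}\hat A_{ij} = 0$; only the $\mathring h\mathring h\hat A \hat A$ term survives, and it equals $|\sum_{i,j}\mathring h_{ij}\hat A_{ij}|^2$ by the definition of the norm of a normal-valued object (summation inside the norm). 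This gives the claimed cross-term coefficient of $2$.

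For \eqref{norm-of-normal-curvature} I would partition $|R^\perp|^2 = \sum_{\alpha,\beta}R^\perp_{ij\alpha\beta}R^\perp_{ij\alpha\beta}$ using the four cases in \eqref{normal-curvature-components}: the diagonal $\alpha=\beta$ contributes nothing, the two off-diagonal cases $\alpha=1,\beta\ge 2$ and $\alpha\ge 2,\beta=1$ are antisymmetric copies of each other and each contributes $|R^\perp_{ij}(\nu_1)|^2$ by \eqref{normal-curvature-in-principal-direction}, and the case $\alpha,\beta\ge 2$ gives $|\hat R^\perp|^2$ by definition \eqref{hat-R-perp}. Summing yields $|\hat R^\perp|^2 + 2|R^\perp_{ij}(\nu_1)|^2$. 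There is no serious obstacle; the only point requiring care is the bookkeeping in the cross term of \eqref{norm-of-A,A}, where one must check that all pieces of $h_{ij}h_{kl}$ except the purely tracefree $\mathring h\mathring h$ contribution are killed by the tracelessness of $\hat A$.
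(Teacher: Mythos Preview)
Your proposal is correct and follows exactly the route the paper indicates: the paper does not give a separate proof of this proposition but simply states that it follows from \eqref{normal-curvature-in-principal-direction}, \eqref{decomp-1}, \eqref{decomp-2}, and \eqref{decomp-3}, and you have spelled out precisely that bookkeeping. In particular, your handling of the cross term in \eqref{norm-of-A,A}---factoring $\sum_{i,j,k,l} h_{ij}h_{kl}\langle \hat A_{ij},\hat A_{kl}\rangle = \big|\sum_{i,j} h_{ij}\hat A_{ij}\big|^2$ and then killing the trace part of $h$ against $g^{ij}\hat A_{ij}=0$---is the correct mechanism, and your partition of $|R^\perp|^2$ via the case table \eqref{normal-curvature-components} reproduces \eqref{decomp-3} exactly.
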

\noindent The expression $|\hat R^\perp|^2$ will appear in our evolution equation for $|\hat A|^2$. For brevity, from now on we will write
\[
| \mathring h_{ij} \hat A_{ij}|^2 = \Big|\sum_{i,j=1}^n\mathring  h_{ij} \hat A_{ij} \Big|^2.
\]

We now give a proof that the vanishing of $\hat A$ implies codimension one. In application, if the initial flow is $c$-pinched, then the blow up will satisfy $|A|^2 \leq c|H|^2$. The argument below works as long as the tensor $|H| g_{ij} - h_{ij}$ is positive definite (which is equivalent to $h_{ij}$ be $(n-1)$-convex). 
\begin{proposition}\label{whyhatA}
Let $n \geq 2$ and $N > n+1$. Suppose $F : M \to \mathbb{R}^N$ is an immersion of a connected, $n$-dimensional manifold satisfying $|H| > 0$. Assume $|H| g_{ij} - h_{ij}$ is positive definite and $\hat A \equiv 0$. Then $F(M)$ is an immersed hypersurface in an $(n+1)$-dimensional affine subspace of $\mathbb{R}^N$. 
\end{proposition}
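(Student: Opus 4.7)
The plan is to first establish that the principal normal $\nu_1$ is parallel with respect to $\nabla^\perp$, and then build constant vectors in $\mathbb{R}^N$ orthogonal to $F(M)-F(p_0)$ from parallel transport within the complementary subbundle $\hat E$. The assumption $\hat A \equiv 0$ simplifies matters dramatically: the decomposition \eqref{decomposition-of-A} reduces to $A_{ij} = h_{ij}\nu_1$, so the entire second fundamental form takes values in the line bundle spanned by $\nu_1$.

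To produce $\nabla^\perp \nu_1 \equiv 0$, I would differentiate $A_{jk} = h_{jk}\nu_1$ and feed the result into the Codazzi equation \eqref{codazzi}. Since $\langle \nabla_i^\perp \nu_1, \nu_1 \rangle = 0$, projecting the Codazzi symmetry onto $\hat E$ should yield
\[
h_{jk}\,\nabla_i^\perp \nu_1 = h_{ik}\,\nabla_j^\perp \nu_1.
\]
Tracing over $j$ and $k$ then gives
\[
(|H|\,g_{ik} - h_{ik})\,\nabla_k^\perp \nu_1 = 0.
\]
This is the step where the hypothesis that $|H|g_{ij} - h_{ij}$ is positive definite enters essentially: invertibility of this symmetric matrix forces $\nabla_k^\perp \nu_1 = 0$ for every $k$. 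In particular, the orthogonal splitting $NM = \mathbb{R}\nu_1 \oplus \hat E$ is preserved by $\nabla^\perp$.

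To finish, I would fix $p_0 \in M$ and $v \in \hat E_{p_0}$, and extend $v$ along an arbitrary smooth path $\gamma:[0,1]\to M$ with $\gamma(0)=p_0$ by $\nabla^\perp$-parallel transport to obtain a section $\hat\nu(s)$. Parallelness of $\hat E$ ensures $\hat\nu(s) \in \hat E_{\gamma(s)}$; in particular $\langle A(X,Y), \hat\nu(s)\rangle = \langle \hat A(X,Y), \hat\nu(s)\rangle = 0$, so the Weingarten operator associated to $\hat\nu(s)$ vanishes. The full Euclidean derivative therefore satisfies $D_{\gamma'}\hat\nu = \nabla^\perp_{\gamma'}\hat\nu = 0$, so $\hat\nu(s)$ coincides with the constant vector $v$ in $\mathbb{R}^N$. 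Integrating $\frac{d}{ds}\langle F(\gamma(s)), v\rangle = \langle dF(\gamma'), v\rangle = 0$ places $F(p) - F(p_0)$ in $\hat E_{p_0}^\perp = T_{p_0}M \oplus \mathbb{R}\nu_1(p_0)$, which is the desired $(n+1)$-dimensional affine subspace.

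The main obstacle I anticipate is the linear-algebra step passing from the traced Codazzi identity to $\nabla^\perp \nu_1 = 0$; the conclusion relies essentially on invertibility of $|H|g - h$, and this is the only place the positive-definiteness hypothesis is used. The parallel-transport finale is purely path-wise, so no topological assumption on $M$ (e.g.\ simple connectedness) is required.
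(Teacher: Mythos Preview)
Your proposal is correct and follows essentially the same route as the paper: both arguments deduce $(|H|g_{ik}-h_{ik})\nabla_k^\perp\nu_1=0$ from the $\hat E$-component of Codazzi (the paper invokes the already-recorded identity \eqref{codazzi-perp}, you rederive it directly), apply positive definiteness to force $\nabla^\perp\nu_1=0$, and then parallel-transport vectors in $\hat E$ to show they are Euclidean-constant. The only cosmetic difference is that the paper transports a full orthonormal basis $\nu_2,\dots,\nu_{N-n}$ at once and phrases the conclusion via constant linear functionals $y_\beta=\langle\,\cdot\,,\omega_\beta\rangle$, while you transport a single $v\in\hat E_{p_0}$ and conclude $F(p)-F(p_0)\in\hat E_{p_0}^\perp$.
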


\begin{proof}
Because $|H| > 0$, the principal normal $\nu_1$ is well-defined. The vanishing of $\hat A$ in addition to our pinching assumption implies $\nu_1$ is parallel with respect to $\nabla^\perp$. Specifically, recall that by projecting the Codazzi equation, we have \eqref{codazzi-perp}:
\[
|H| \nabla_i^\perp \nu_1 = \sum_{k=1}^n\hat \nabla_k^\perp \hat A_{ik} + h_{ik} \nabla_k^\perp \nu_1 = h_{ik} \nabla_k^\perp \nu_1. 
\]
Since the tensor $|H|g_{ik} - h_{ik}$ is positive definite, we must have $\nabla^\perp \nu_1 = 0$. 

Now let $\gamma : [0, 1] \to M$ be a smooth path connecting any pair of distinct points $p_0 = \gamma(0)$ and $p_1 = \gamma(1)$ in $M$. Define $\nu_{2}(0),\dots, \nu_{N-n}(0)$ to be the completion of $\nu_1(p_0)$ to an orthonormal basis of $N_{p_0}M$. For $\beta \in \{2, \dots, N-n\}$ and $s \in [0, 1]$, let $\nu_\beta(s) \in N_{\gamma(s)}M$ be the parallel transport of $\nu_\beta(0)$ with respect to $\nabla^\perp$. Because $\nu_1$ is parallel with respect to $\nabla^\perp$ and $\langle \nu_\beta(0), \nu_1(p_0) \rangle = 0$, we have $\big \langle \nu_\beta(s), \nu_1\big(\gamma(s)\big) \big\rangle = 0$ for all $s \in [0,1]$. If we let $e_1, \dots, e_n$ denote a parallel orthonormal basis of $T_{\gamma(s)}M$ along $\gamma$, then 
\[
(D_{\gamma'(s)} \nu_\beta(s))^\top =\sum_{i=1}^n \langle D_{\gamma'(s)} \nu_\beta(s), e_i \rangle e_i = - \sum_{i = 1}^n \langle \nu_{\beta}(s), A(\gamma'(s), e_i) \rangle e_i = 0,
\]
since $\nu_{\beta}(s)$ is orthogonal to $A = h \nu_1$. It follows that 
\[
D_{\gamma'(s)} \nu_\beta(s) = \nabla_{\gamma'(s)}^\perp \nu_\beta(s) + (D_{\gamma'(s)} \nu_\beta(s))^\top =0,
\]
which shows $\nu_\beta(s)$ is parallel along $\gamma$ with respect to the ambient connection $D$ as well. On the other hand, the constant unit vector field $\omega_\beta$ in $\mathbb{R}^N$ defined by the condition $\omega_\beta(F(p_0)) = \nu_\beta(0)$, is also parallel along $F(\gamma(s))$ with respect to $D$. By uniqueness of parallel transport, this implies $\nu_\beta(1)$ agrees with the restriction of $\omega_\beta$ to $F(M)$. Since $p_1$ was arbitrary, we see that the restriction of the vector fields $\omega_2, \dots, \omega_{N- n}$ form a parallel orthonormal basis of the complement of $\nu_1$ in $NM$ at every point on $M$. This implies the ambient coordinate functions $y_\beta : \mathbb{R}^{N} \to \mathbb{R}$ given by $y_\beta(x) = \langle x, \omega_{\beta} \rangle$ are constant on $F(M)$.  It follows that $F(M)$ must lie in a translation of the $(n+1)$-dimensional subspace of $\mathbb{R}^N$ orthogonal to $\omega_2, \dots, \omega_{N- n}$. 
\end{proof}


Finally, as the main application of Theorem \ref{main}, we deduce that singularity models must be codimension one. 

\begin{corollary}\label{singularities}
Suppose $n \geq 5$ and $N > n+1 $. Let $c_n =\min\{ \frac{4}{3n}, \frac{3(n+1)}{2n(n+2)}\}$. Suppose $M_t \subset \mathbb{R}^N$, $t \in [0, T)$, is a smooth family of $n$-dimensional closed submanifolds evolving by mean curvature flow which initially satisfies $|A|^2 < c_n |H|^2$. Then at the first singular time every blow-up limit must be codimension one. 
\end{corollary}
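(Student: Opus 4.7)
The strategy is a parabolic blow-up at the singular time $T$, combining the planarity estimate of Theorem \ref{main} with Proposition \ref{whyhatA}.

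\emph{Setting up the blow-up.} Since $T$ is finite, $|A|$ must blow up as $t \to T$, and by the preserved pinching $|A|^2 < c_n|H|^2$ from \cite{AB10}, so does $|H|$. I would choose a sequence $(p_k, t_k) \in M \times [0,T)$ with $\lambda_k := |H|(p_k, t_k) \to \infty$, using Hamilton's point-picking if necessary to arrange that $|H| \leq 2\lambda_k$ on a parabolic neighborhood of $(p_k, t_k)$ whose rescaled image exhausts $\mathbb{R}^N \times (-\infty, 0]$. Define the parabolically rescaled flows
\[
F_k(q, \tau) := \lambda_k\bigl(F(q, t_k + \lambda_k^{-2}\tau) - F(p_k, t_k)\bigr),
\]
which are solutions of MCF satisfying $|H_k|(p_k, 0) = 1$, $|H_k| \leq 2$ on the rescaled cylinders, and $|A_k|^2 \leq c_n|H_k|^2$ by Andrews-Baker. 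Nguyen's pointwise derivative estimate for $c_n$-pinched flows \cite{Ngu18} gives uniform bounds on $|\nabla^{\perp,(m)} A_k|$ on compact subsets for every $m \geq 0$, so by standard compactness a subsequence converges smoothly on compact subsets to a smooth limit flow $\tilde M_\tau$ satisfying $|\tilde A|^2 \leq c_n|\tilde H|^2$ and $|\tilde H|(0, 0) = 1$.

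\emph{Vanishing of $\hat A$ in the limit.} Under parabolic rescaling by $\lambda_k$, each of $|A|$, $|H|$, and $|\hat A|$ scales as inverse length, so $|\hat A_k|^2 = \lambda_k^{-2}|\hat A|^2$ and $|H_k|^2 = \lambda_k^{-2}|H|^2$. The estimate $|\hat A|^2 \leq C|H|^{2-\sigma}$ of Theorem \ref{main} therefore translates to
\[
|\hat A_k|^2 \leq C\lambda_k^{-\sigma}|H_k|^{2-\sigma}
\]
on the rescaled flows. Since $|H_k|$ is uniformly bounded on compact subsets and $\lambda_k \to \infty$, the right-hand side tends to zero. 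Passing to the smooth limit gives $\hat{\tilde A} \equiv 0$ on $\tilde M_\tau$.

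\emph{Applying Proposition \ref{whyhatA}.} For $n \geq 5$ we have $c_n \leq \frac{4}{3n} < \frac{1}{n-1}$, so $|\tilde h|^2 \leq |\tilde A|^2 \leq c_n|\tilde H|^2 < \frac{1}{n-1}|\tilde H|^2$. On the open set $\{|\tilde H| > 0\}$, an elementary Cauchy-Schwarz estimate on the eigenvalues of $\tilde h$ (which sum to $|\tilde H|$) shows that the largest eigenvalue is strictly less than $|\tilde H|$, so $|\tilde H|g - \tilde h$ is positive definite there. Since the origin lies in this open set, Proposition \ref{whyhatA} applies to its connected component and shows that component lies in an $(n+1)$-dimensional affine subspace of $\mathbb{R}^N$; hence the blow-up is codimension one. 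The main technical input is the smooth subsequential convergence of the rescaled flows, which rests on Andrews-Baker's preservation of pinching and Nguyen's pointwise derivative estimate; the decisive new ingredient is the subcritical power $2 - \sigma < 2$ in Theorem \ref{main}, producing the crucial factor $\lambda_k^{-\sigma} \to 0$.
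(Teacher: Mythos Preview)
Your argument captures the essential mechanism---Theorem \ref{main} scales to give $\hat{\tilde A} \equiv 0$ on the blow-up, after which Proposition \ref{whyhatA} forces codimension one---and you supply useful detail on the blow-up construction that the paper leaves implicit. However, there is a gap in the final step. You apply Proposition \ref{whyhatA} only to the connected component of the open set $\{|\tilde H| > 0\}$ containing the basepoint, concluding that \emph{this} piece lies in an $(n+1)$-plane. But Proposition \ref{whyhatA} requires $|H| > 0$ on the entire connected manifold to which it is applied, and you have not ruled out that $|\tilde H|$ vanishes somewhere on the limit; without this, the conclusion ``hence the blow-up is codimension one'' does not follow from what you have shown.

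The paper closes this gap with a strong maximum principle. From the strict initial pinching one finds $\delta > 0$ with $\tilde f := (c_n - \delta)|\tilde H|^2 - |\tilde A|^2 \geq \delta |\tilde H|^2 \geq 0$ on the limit, and since $c_n - \delta < \tfrac{4}{3n}$ one has $(\partial_t - \Delta)\tilde f \geq 0$ by Lemma \ref{evolution-inequality-of-f} (equivalently, the computations in \cite{AB10}). The strong maximum principle then yields a dichotomy: either $\tilde f \equiv 0$, whence $\tilde H \equiv 0$ and $\tilde A \equiv 0$ (a hyperplane, certainly codimension one), or $\tilde f > 0$ everywhere, so $|\tilde H| > 0$ on all of $\tilde M_\tau$ and Proposition \ref{whyhatA} applies to the whole limit. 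As a secondary point, the corollary asserts the conclusion for \emph{every} blow-up limit, not just one produced by a particular point-picking; your scaling computation for $|\hat A_k|^2 \leq C\lambda_k^{-\sigma}|H_k|^{2-\sigma}$ is valid for any rescaling sequence with $\lambda_k \to \infty$, so this is easily repaired once the dichotomy is in place.
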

\begin{proof}
Since we have strict inequality $|A|^2 < c_n |H|^2$, we can find $\delta > 0$, depending on $M_0$, such that  $f := (c_n-\delta) |H|^2 -|A|^2$ satisfies $f \geq \delta |H|^2$ on $M_0$. Then, by Theorem 2 in \cite{AB10} (note Remark \ref{notation-difference}), $f \geq \delta |H|^2$ for $t \in [0, T)$. Now, suppose $\tilde M_t$ is a smooth blow-up limit. On the blow-up limit, we must also have $\tilde f := (c_n-\delta) |\tilde H|^2 - |\tilde A|^2 \geq \delta |\tilde H|^2 \geq 0$. Since $c_n -\delta < \frac{4}{3n}$, it follows by work in Section 3 of \cite{AB10} that $(\partial_t - \Delta) \tilde f \geq 0$. En route to proving Theorem \ref{main}, we will also establish this inequality in Lemma \ref{evolution-inequality-of-f} below. Hence, by the strong maximum principle, either $\tilde f \equiv 0$ or $\tilde f > 0$ on the blow-up limit. If $\tilde f \equiv 0$, then $\tilde H \equiv 0$, and consequently $\tilde A \equiv 0$. In this case, the blow-up must be a codimension one hyperplane. Otherwise, $\tilde f > 0$, and thus $|\tilde H| > 0$ everywhere on the blow-up. Since $|\tilde A|^2\leq c_n |\tilde H|^2$ easily implies $|\tilde H|\tilde g_{ij} - \tilde h_{ij}$ is positive definite, Proposition \ref{whyhatA} implies the blow-up limit must be codimension one. 
\end{proof}


\section{Evolution of $|\hat A|^2$}

In this section, we compute the evolution equation of $|\hat A|^2$.  We do this by using the formulas stated in Section 2. To begin, we recall the useful standard identity
\begin{equation}\label{useful-identity}
\Big(\frac{\partial}{\partial t} - \Delta\Big)\Big(\frac{u}{v}\Big) =  \frac{1}{v}\Big(\frac{\partial}{\partial t} - \Delta\Big)u - \frac{u}{v^2} \Big(\frac{\partial}{\partial t} - \Delta\Big)v + \frac{2}{v} \nabla_k v \,\nabla_k \Big(\frac{u}{v}\Big).
\end{equation}
Now it follows from \eqref{def-hat-A} that
\[
|\hat A|^2 = |A|^2 - |\langle A_{ij}, H \rangle|^2|H|^{-2}. 
\]
So we will need the evolution equations of $|A|^2$ and $|\langle A_{ij}, H \rangle|^2|H|^{-2}$. We have already recorded, in \eqref{evolution-of-|A|^2} and \eqref{evolution-of-|H|^2}, the evolution equations of $|A|^2$ and $|H|^2$. The latter of these equations combined with \eqref{useful-identity} (set $u = |\langle A_{ij}, H\rangle|^2$ and $v = |H|^2$) implies  
\begin{align}
\label{comp1} \Big(\frac{\partial}{\partial t} - \Delta\Big)\frac{|\langle A_{ij}, H \rangle |^2}{|H|^{2}} &= |H|^{-2} \Big(\frac{\partial}{\partial t} - \Delta \Big)|\langle A_{ij}, H\rangle|^2\\
\nonumber& \qquad  - |H|^{-4}|\langle A_{ij}, H\rangle|^2\big(- 2|\nabla^\perp H|^2 + 2 |\langle A_{kl}, H \rangle|^2\big) \\
\nonumber & \qquad + 2|H|^{-2}\Big\langle \nabla_k |H|^2, \nabla_k \frac{|\langle A_{ij}, H \rangle |^2}{|H|^2} \Big\rangle.
\end{align}
Before computing the evolution of $|\langle A_{ij}, H \rangle|^2$, we simplify the terms on the second and third lines using Propositions \ref{decompgrad} and \ref{decompreac}. In particular, using $|\langle A_{ij}, H \rangle|^2 = |H|^2|h|^2$ and \eqref{nabla-perp-H-squared-formula}, we rewrite the two terms on the second line of \eqref{comp1} as
\begin{align}
\label{comp2} 2|H|^{-4}|\langle A_{ij}, H \rangle|^2|\nabla^\perp H|^2 & = 2|h|^2|\nabla^\perp \nu_1|^2 + 2|H|^{-2}|h|^2|\nabla |H||^2,\\
\label{comp3} -2 |H|^{-4}|\langle A_{ij}, H \rangle|^4 &= - 2|h|^4. 
\end{align}
As for the gradient term on the third line of \eqref{comp1}, we have $\nabla_k |H|^2 = 2|H| \nabla_k|H|$ and  $\nabla_k (|H|^{-2}|\langle A_{ij}, H \rangle|^2) = \nabla_k |h|^2 = 2 h_{ij} \nabla_k h_{ij}$. Therefore, 
\begin{equation}
\label{comp4} 2|H|^{-2}\Big\langle \nabla_k |H|^2, \nabla_k \frac{|\langle A_{ij}, H \rangle |^2}{|H|^2} \Big\rangle  = 8 |H|^{-1} h_{ij} \nabla_k |H| \nabla_k h_{ij}. 
\end{equation}
To summarize \eqref{comp2}, \eqref{comp3}, and \eqref{comp4}, we have shown so far that
\begin{align}
\label{comp5} \Big(\frac{\partial}{\partial t} - \Delta\Big)\frac{|\langle A_{ij}, H \rangle |^2}{|H|^{2}} &= |H|^{-2} \Big(\frac{\partial}{\partial t} - \Delta \Big)|\langle A_{ij}, H\rangle|^2\\
\nonumber & \qquad  - 2|h|^4 + 2|h|^2 |\nabla^\perp \nu_1|^2 + 2|H|^{-2}|h|^2 |\nabla |H||^2 \\
\nonumber & \qquad + 8 |H|^{-1} h_{ij} \nabla_k |H| \nabla_k h_{ij}.
\end{align}
For the evolution of $|\langle A_{ij}, H \rangle|^2$, we have the following lemma. 
\begin{lemma}
\begin{align}
\label{comp6} |H|^{-2} \Big(\frac{\partial}{\partial t} - \Delta \Big)|\langle A_{ij}, H \rangle|^2& = 4|\mathring h_{ij} \hat A_{ij}|^2 + 2|R^\perp_{ij}(\nu_1)|^2 +4|h|^4  \\
\nonumber & \quad -4|H|^{-1} \mathring h_{ij} \nabla_k|H| \langle \nabla_k^\perp \hat A_{ij}, \nu_1 \rangle  -4 \mathring h_{ij} \langle \nabla_k^\perp \hat A_{ij}, \nabla_k^\perp \nu_1 \rangle \\
\nonumber & \quad - 4 |h|^2 |\nabla^\perp\nu_1|^2 -2 |H|^{-2} |h|^2 |\nabla |H||^2 -8|H|^{-1} h_{ij} \nabla_k |H| \nabla_k h_{ij} -2 |\nabla h|^2.
\end{align}
\end{lemma}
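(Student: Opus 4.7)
The plan is to set $T_{ij} := \langle A_{ij}, H\rangle = |H| h_{ij}$, treating it as a $(0,2)$-tensor on $M$, so that $|\langle A_{ij}, H\rangle|^2 = |T|^2$. Accounting for $\partial_t g^{ij} = 2\langle A_{ij}, H\rangle$ from \eqref{evolution-of-inverse}, a standard product-rule computation in orthonormal coordinates at the point gives
\begin{equation*}
(\partial_t - \Delta)|T|^2 = 4 \langle A_{ik}, H\rangle T_{ij} T_{kj} + 2 T^{ij} (\partial_t - \Delta) T_{ij} - 2 |\nabla T|^2.
\end{equation*}
Because $\nabla^\perp$ preserves the Euclidean inner product, the Leibniz rule yields
\begin{equation*}
(\partial_t - \Delta) T_{ij} = \langle (\partial_t^\perp - \Delta^\perp) A_{ij}, H\rangle + \langle A_{ij}, (\partial_t^\perp - \Delta^\perp) H\rangle - 2 \langle \nabla_k^\perp A_{ij}, \nabla_k^\perp H\rangle,
\end{equation*}
into which we substitute the reaction terms from Proposition~\ref{evolution-equations} and contract with $2T^{ij} = 2|H|h_{ij}$.

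For the reaction contributions, I would use $\langle A_{ij}, H\rangle = |H| h_{ij}$ and $\langle A_{ij}, A_{kl}\rangle = h_{ij} h_{kl} + \langle \hat A_{ij}, \hat A_{kl}\rangle$ from Proposition~\ref{decompreac}, together with the traceless identity $h_{ij}\hat A_{ij} = \mathring h_{ij} \hat A_{ij}$. Three key simplifications then occur: (i) the two $-\langle H, A_{ik}\rangle A_{jk}$-type terms in \eqref{evolution-of-A} contribute $-4|H|^3 \operatorname{tr}(h^3)$, exactly canceling the metric-evolution piece $4\langle A_{ik}, H\rangle T_{ij}T_{kj} = 4|H|^3\operatorname{tr}(h^3)$; (ii) the $\operatorname{tr}(h^4)$-pieces from the three terms $-2\langle A_{ik}, A_{jl}\rangle A_{kl}$, $\langle A_{ik}, A_{kl}\rangle A_{jl}$, and $\langle A_{jl}, A_{kl}\rangle A_{ik}$ cancel amongst themselves; (iii) the residual quartic pieces $-4|H|^2 h_{ij}h_{kl}\langle \hat A_{ik}, \hat A_{jl}\rangle$ and $+4|H|^2(h^2)_{jk}\langle \hat A_{jl}, \hat A_{kl}\rangle$ recombine, via index-relabeling using the symmetries of $h$ and $\hat A$ and the identity \eqref{normal-curvature-in-principal-direction}, into $2|H|^2|R^\perp_{ij}(\nu_1)|^2$. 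Meanwhile, the $\langle A_{ij}, A_{kl}\rangle A_{kl}$ piece together with $\langle A_{ij}, (\partial_t^\perp - \Delta^\perp) H\rangle$ give $4|H|^2|h|^4 + 4|H|^2|\mathring h_{ij}\hat A_{ij}|^2$.

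For the gradient contributions, $\nabla_k T_{ij} = (\nabla_k |H|) h_{ij} + |H|\nabla_k h_{ij}$ expands $-2|\nabla T|^2$ directly into the three scalar-derivative terms in the statement. The cross-gradient term $-4|H|h_{ij}\langle \nabla_k^\perp A_{ij}, \nabla_k^\perp H\rangle$ is decomposed via Proposition~\ref{decompgrad}: $\nabla_k^\perp H = |H|\nabla_k^\perp \nu_1 + (\nabla_k|H|)\nu_1$, and $\nabla_k^\perp A_{ij}$ splits into its $\hat E$-component $\hat\nabla_k^\perp \hat A_{ij} + h_{ij}\nabla_k^\perp \nu_1$ and its $E_1$-component $\bigl(\langle \nabla_k^\perp \hat A_{ij}, \nu_1\rangle + \nabla_k h_{ij}\bigr)\nu_1$. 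The orthogonality relations \eqref{orthgonality-relations}, together with the trace identity $g^{ij}\nabla_k^\perp \hat A_{ij} = 0$ (which lets us replace $h_{ij}$ by $\mathring h_{ij}$ in every contraction against $\nabla^\perp \hat A_{ij}$), then produce exactly the three mixed gradient terms in the statement. Dividing by $|H|^2$ yields the identity.

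The main obstacle is the reaction-term bookkeeping: a number of $\operatorname{tr}(h^3)$, $\operatorname{tr}(h^4)$, and quartic mixed-$\hat A$ expressions must be carefully tracked through cancellations and nontrivial index-relabeling symmetries, and the recognition that the leftover quartic residuals reassemble precisely into $2|H|^2|R^\perp_{ij}(\nu_1)|^2$ is the crucial algebraic step. The trace-replacement trick $h \to \mathring h$ under contraction against $\nabla^\perp \hat A$ is essential for matching the gradient terms to the stated form.
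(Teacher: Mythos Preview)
Your proposal is correct and follows essentially the same route as the paper: compute $(\partial_t-\Delta)|T|^2$ for $T_{ij}=\langle A_{ij},H\rangle$ via the evolution equations in Proposition~\ref{evolution-equations}, then decompose all pieces through $A=\hat A+h\nu_1$ and the splitting $\nabla^\perp H=|H|\nabla^\perp\nu_1+(\nabla|H|)\nu_1$. Your identifications (i)--(iii) of the reaction cancellations match the paper's computations \eqref{comp8}--\eqref{comp9} exactly. One small bookkeeping slip: the cross-gradient term $-4|H|h_{ij}\langle\nabla_k^\perp A_{ij},\nabla_k^\perp H\rangle$ actually produces \emph{four} pieces (not three), one of which is $-4|H|^{-1}h_{ij}\nabla_k|H|\nabla_kh_{ij}$; this combines with the identical term coming from $-2|\nabla T|^2$ to yield the coefficient $-8$ in the statement, so your attribution of ``three scalar-derivative terms'' to $-2|\nabla T|^2$ alone does not account for the full $-8$.
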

\begin{proof}
Recall that any time $h$ is traced with $\hat A$, we may replace $h$ with $\mathring h$ because $\hat A$ is traceless. To begin, we use \eqref{evolution-of-A} and \eqref{evolution-of-H} to obtain 
\begin{align*}
\Big\langle \Big(\frac{\partial}{\partial t}^\perp - \Delta^\perp\Big)A_{ij}, H \Big\rangle & = - \langle H, A_{ik} \rangle \langle A_{jk}, H \rangle - \langle H, A_{jk} \rangle \langle A_{ik}, H \rangle +  \langle A_{ij},  A_{kl} \rangle \langle A_{kl}, H \rangle \\
&\qquad - 2\langle A_{ik}, A_{jl} \rangle \langle A_{kl}, H \rangle + \langle A_{ik}, A_{kl} \rangle \langle A_{jl}, H \rangle + \langle A_{jl},  A_{kl} \rangle \langle A_{ik}, H \rangle, \\
\Big\langle A_{ij},  \Big(\frac{\partial}{\partial t}^\perp - \Delta^\perp\Big)H \Big\rangle &= \langle A_{kl}, H \rangle\langle A_{ij}, A_{kl} \rangle. 
\end{align*}
Tracing each of these equations with a copy of $\langle A_{ij}, H \rangle$, we get
\begin{align*}
\Big\langle \Big(\frac{\partial}{\partial t}^\perp - \Delta^\perp\Big)A_{ij}, H \rangle \langle A_{ij}, H \rangle & = -2 \langle A_{ik}, H \rangle \langle A_{jk}, H \rangle\langle A_{ij}, H \rangle  +  \langle A_{ij},  A_{kl} \rangle \langle A_{kl}, H \rangle \langle A_{ij}, H \rangle\\
&\qquad - 2\langle A_{ik}, A_{jl} \rangle \langle A_{kl}, H \rangle\langle A_{ij}, H \rangle + 2\langle A_{ik}, A_{kl} \rangle \langle A_{jl}, H \rangle \langle A_{ij}, H \rangle, \\
\Big\langle A_{ij},  \Big(\frac{\partial}{\partial t}^\perp - \Delta^\perp\Big)H \Big\rangle \langle A_{ij}, H \rangle&=\langle A_{ij}, A_{kl} \rangle \langle A_{kl}, H \rangle\langle A_{ij}, H \rangle.
\end{align*}
Combining these latter formulas together gives
\begin{align*}
\Big(\Big(\frac{\partial}{\partial t} - \Delta \Big)\langle A_{ij}, H \rangle\Big)\langle A_{ij}, H \rangle & = -2 \langle A_{ik}, H \rangle \langle A_{jk}, H \rangle\langle A_{ij}, H \rangle  +  2\langle A_{ij},  A_{kl} \rangle \langle A_{kl}, H \rangle \langle A_{ij}, H \rangle\\
&\qquad - 2\langle A_{ik}, A_{jl} \rangle \langle A_{kl}, H \rangle\langle A_{ij}, H \rangle + 2\langle A_{ik}, A_{kl} \rangle \langle A_{jl}, H \rangle \langle A_{ij}, H \rangle \\
& \qquad - 2\langle \nabla_k^\perp A_{ij}, \nabla_k^\perp H \rangle \langle A_{ij}, H \rangle.
\end{align*}
Therefore, recalling \eqref{evolution-of-inverse}, we have 
\begin{align}
\nonumber \Big(\frac{\partial}{\partial t} - \Delta \Big)|\langle A_{ij}, H \rangle|^2 &= 2 \Big(\frac{\partial}{\partial t} g^{ij}\Big) g^{kl} \langle A_{ik}, H \rangle \langle A_{jl}, H \rangle + 2\Big(\Big(\frac{\partial}{\partial t} - \Delta \Big)\langle A_{ij}, H \rangle\Big)\langle A_{ij}, H \rangle  - 2|\nabla \langle A_{ij}, H \rangle|^2\\
\label{comp7} & = 4\langle A_{ij},  A_{kl} \rangle \langle A_{kl}, H \rangle \langle A_{ij}, H \rangle\\
\nonumber  &\qquad - 4\langle A_{ik}, A_{jl} \rangle \langle A_{kl}, H \rangle\langle A_{ij}, H \rangle + 4\langle A_{ik}, A_{kl} \rangle \langle A_{jl}, H \rangle \langle A_{ij}, H \rangle \\
\nonumber & \qquad - 4\langle \nabla_k^\perp A_{ij}, \nabla_k^\perp H \rangle \langle A_{ij}, H \rangle - 2|\nabla \langle A_{ij}, H \rangle|^2.
\end{align}
To finish the proof, we multiply by $|H|^{-2}$ and then rewrite each of the remaining terms using $A = \hat A + h\nu$. For the term in the first line of \eqref{comp7}, we have 
\begin{align}
\nonumber 4|H|^{-2}\langle A_{ij},  A_{kl} \rangle \langle A_{kl}, H \rangle \langle A_{ij}, H \rangle &= 4|H|^{-2}|H|^2h_{ij}h_{kl}\langle  A_{ij}, A_{kl} \rangle \\
\nonumber &= 4 |h|^4 + 4h_{ij}h_{kl}\langle  \hat A_{ij}, \hat A_{kl} \rangle \\
\nonumber &= 4 |h|^4 + 4\mathring h_{ij}\mathring h_{kl}\langle  \hat A_{ij}, \hat A_{kl} \rangle \\
\label{comp8}& = 4|h|^4 + 4|\mathring h_{ij} \hat A_{ij} |^2.
\end{align}
For the terms in the second line of \eqref{comp7}, recalling \eqref{norm-of-normal-curvature-in-principal-direction}, we have
\begin{align*}
|R^\perp_{ij}(\nu_1)|^2 &= |\mathring h_{ik} \hat A_{jk} - \mathring h_{jk} \hat A_{ik} |^2 \\
& =  | h_{ik}  A_{jk} -  h_{jk}  A_{ik} |^2 \\
& =\langle  h_{ik}  A_{jk} -  h_{jk}  A_{ik}, h_{il}  A_{jl} -  h_{jl}  A_{il} \rangle \\
& = 2 h_{ik} h_{il} \langle  A_{jk},  A_{jl}\rangle - 2h_{ik} h_{jl} \langle  A_{jk},  A_{il} \rangle \\
& = 2\langle  A_{jk},  A_{jl}\rangle\langle A_{ik}, \frac{H}{|H|} \rangle \langle A_{il}, \frac{H}{|H|} \rangle - 2\langle  A_{jk},  A_{il} \rangle\langle A_{ik}, \frac{H}{|H|} \rangle\langle A_{jl}, \frac{H}{|H|} \rangle.
\end{align*}
After reindexing (e.g. $ j \to k \to l \to i \to j$ on the first term and $j \to i \to l \to j$, $k \to k$ on the second term), this gives
\begin{equation}
\label{comp9} 4|H|^{-2}\big(\langle A_{ik}, A_{kl} \rangle \langle A_{jl}, H \rangle \langle A_{ij}, H \rangle -  \langle A_{ik}, A_{jl} \rangle \langle A_{kl}, H \rangle\langle A_{ij}, H \rangle\big)  = 2 |R^\perp_{ij}(\nu_1)|^2. 
\end{equation}
Together \eqref{comp8} and \eqref{comp9} verify the claimed reaction terms on the first line \eqref{comp6}. 

It remains to analyze terms in third line of \eqref{comp7}. By  \eqref{derivative-of-A} and \eqref{derivative-of-H}
\begin{align*}
\langle \nabla_k^\perp A_{ij}, \nabla_k^\perp H\rangle &= |H| \langle \nabla_k^\perp A_{ij}, \nabla^\perp_k \nu_1 \rangle +\nabla_k|H| \langle \nabla_k^\perp A_{ij}, \nu_1 \rangle \\
& =  |H| \langle \nabla_k^\perp \hat A_{ij}, \nabla^\perp_k \nu_1 \rangle+  |H| h_{ij}|\nabla^\perp \nu_1|^2 +\nabla_k|H| \langle \nabla_k^\perp \hat A_{ij}, \nu_1 \rangle  + \nabla_k|H| \nabla_k h_{ij}. 
\end{align*}
Consequently, the first term on the third line of \eqref{comp7} is
\begin{align}
\nonumber -4|H|^{-2} \langle \nabla_k^\perp A_{ij}, \nabla_k^\perp H \rangle \langle A_{ij}, H \rangle & = -4|H|^{-1}h_{ij}  \langle \nabla_k^\perp A_{ij}, \nabla_k^\perp H \rangle\\
\nonumber& = -4|H|^{-1}  h_{ij}\Big( |H| \langle \nabla_k^\perp \hat A_{ij}, \nabla^\perp_k \nu_1 \rangle+  |H| h_{ij}|\nabla^\perp \nu_1|^2 \\
\nonumber& \qquad \qquad \qquad\qquad  \qquad +\nabla_k|H| \langle \nabla_k^\perp \hat A_{ij}, \nu_1 \rangle  + \nabla_k|H| \nabla_k h_{ij}\Big)\\
\label{comp10}& = -4|H|^{-1}\mathring h_{ij} \nabla_k|H| \langle \nabla_k^\perp \hat A_{ij}, \nu_1 \rangle  -4|H|^{-1} h_{ij} \nabla_k|H| \nabla_k h_{ij}\\
\nonumber & \qquad   -4 \mathring h_{ij} \langle \nabla_k^\perp \hat A_{ij}, \nabla_k^\perp \nu_1 \rangle - 4 |h|^2 |\nabla^\perp \nu_1|^2. 
\end{align}
As for the second term on the third line of \eqref{comp7}, we have
\begin{align}
\nonumber -2|H|^{-2} |\nabla \langle A_{ij}, H \rangle |^2 &= -2|H|^{-2}|\nabla (|H|h_{ij}) |^2 \\
\label{comp11}&= -2|H|^{-2} |h|^2 |\nabla |H||^2 -2 |\nabla h|^2 - 4|H|^{-1} h_{ij} \nabla_k |H| \nabla_k h_{ij}. 
\end{align}
Equations \eqref{comp10} and \eqref{comp11} give the claimed six gradient terms on the right-hand side of equation \eqref{comp6}. This completes the proof of the lemma.
\end{proof}

Substituting the lemma statement \eqref{comp6} into \eqref{comp5} and combining like terms yields 
\begin{align*}
\Big(\frac{\partial}{\partial t} - \Delta\Big)\frac{|\langle A_{ij}, H \rangle |^2}{|H|^{2}} &= 4|\mathring h_{ij}\hat A_{ij}|^2 + 2|R^\perp_{ij}(\nu_1)|^2 + 2|h|^4 \\
& \qquad -4|H|^{-1}\mathring  h_{ij} \nabla_k|H| \langle \nabla_k^\perp \hat A_{ij}, \nu_1 \rangle  -4 \mathring h_{ij} \langle \nabla_k^\perp \hat A_{ij}, \nabla_k^\perp \nu_1 \rangle\\
& \qquad  -2 |\nabla h|^2 - 2|h|^2 |\nabla^\perp \nu_1|^2.
\end{align*}
We negate the expression above and add \eqref{evolution-of-|A|^2}, the evolution equation of $|A|^2$, to get
\begin{align}
\label{comp12} \Big(\frac{\partial}{\partial t} - \Delta\Big) |\hat A|^2 & = - 2|\nabla^\perp A|^2 +  2|\langle A_{ij}, A_{kl} \rangle|^2 + 2|R^\perp|^2\\
\nonumber & \qquad  - 4|\mathring h_{ij} \hat A_{ij}|^2 - 2|R^\perp_{ij}(\nu_1)|^2 - 2|h|^4 \\
\nonumber & \qquad + 4|H|^{-1} \mathring h_{ij} \nabla_k|H| \langle \nabla_k^\perp \hat A_{ij}, \nu_1 \rangle  + 4 \mathring h_{ij} \langle \nabla_k^\perp \hat A_{ij}, \nabla_k^\perp \nu_1 \rangle\\
\nonumber & \qquad  + 2 |\nabla h|^2 + 2|h|^2 |\nabla^\perp \nu_1|^2.
\end{align}
By \eqref{norm-of-A,A} and \eqref{norm-of-normal-curvature} in Proposition \ref{decompreac}, the reaction terms above satisfy
\begin{align}
\label{comp13}2 |\langle A_{ij}, A_{kl} \rangle|^2  - 4|\mathring h_{ij} \hat A_{ij}|^2  - 2|h|^4 &= 2|\langle \hat A_{ij},\hat A_{kl} \rangle|^2, \\
\label{comp14}2|R^\perp_{ij}|^2 - 2|R_{ij}^\perp(\nu_1)|^2 &= 2|\hat R^\perp|^2 + 2|R^\perp_{ij}(\nu_1)|^2.
\end{align}
As for the gradient terms, using \eqref{derivative-of-A}, we obtain  
\begin{align*}
|\nabla^\perp A|^2 = |\nabla^\perp \hat A|^2 + |\nabla h|^2 + |h|^2 |\nabla^\perp \nu_1|^2 + 2 \mathring h_{ij} \langle \nabla^\perp \hat A_{ij}, \nabla_k^\perp \nu_1 \rangle + 2 \nabla_k \mathring h_{ij} \langle \nabla_k^\perp \hat A_{ij}, \nu_1 \rangle. 
\end{align*} 
Rearranging this gives 
\begin{align}
\label{comp15}- 2|\nabla^\perp A|^2 + 2 |\nabla h|^2+ 2|h|^2 |\nabla^\perp \nu_1|^2 + 4 \mathring h_{ij} \langle \nabla_k^\perp \hat A_{ij}, \nabla_k^\perp \nu_1 \rangle =  -2|\nabla^\perp \hat A|^2 - 4\nabla_k \mathring h_{ij} \langle \nabla_k^\perp \hat A_{ij}, \nu_1\rangle. 
\end{align}
Substituting \eqref{comp13}, \eqref{comp14}, and \eqref{comp15} into equation \eqref{comp12}, we finally get
\begin{align}
\label{comp16}\Big(\frac{\partial}{\partial t} - \Delta\Big) |\hat A|^2 & = 2|\langle\hat  A_{ij}, \hat A_{kl} \rangle|^2 +  2|\hat R^\perp |^2 + 2|R^\perp_{ij}(\nu_1)|^2\\
\nonumber& \qquad - 2|\nabla^\perp \hat A|^2 + 4|H|^{-1} \mathring h_{ij} \nabla_k|H| \langle \nabla_k^\perp \hat A_{ij}, \nu_1 \rangle  - 4 \nabla_k \mathring h_{ij} \langle \nabla_k^\perp \hat A_{ij}, \nu_1 \rangle.
\end{align}
Note that, by the orthogonality relations \eqref{orthgonality-relations} and the identity $\mathring A = \hat A + \mathring h \nu_1$, 
\begin{align*}
\langle \nabla_k^\perp \hat A_{ij}, \nu_1 \rangle &= -\langle \hat A_{ij}, \nabla_k^\perp \nu_1 \rangle = -\langle \mathring A_{ij},\nabla_k^\perp \nu_1 \rangle,\\ 
\nabla_k \mathring h_{ij} &= \langle \nabla_k^\perp \mathring A_{ij}, \nu_1 \rangle - \langle \nabla_k^\perp \hat A_{ij}, \nu_1 \rangle. 
\end{align*}
From these and \eqref{comp16}, we may now conclude the following proposition. 
\begin{proposition}[Evolution of $|\hat A|^2$]\label{evolution-of-hatA}
\begin{align}
\label{evolution-of-|hatA|^2} \Big(\frac{\partial}{\partial t}  - \Delta\Big)|\hat A|^2 & =  2 | \langle \hat A_{ij}, \hat A_{kl} \rangle |^2 + 2 |\hat R^\perp|^2 + 2|R^\perp_{ij}(\nu_1)|^2 \\
\nonumber & \qquad - 2|\nabla^\perp \hat A|^2 + 4 \sum_{i,j,k = 1}^nQ_{ijk} \langle  \hat A_{ij}, \nabla^\perp_k\nu_1\rangle,
\end{align}
where 
\begin{equation}
\label{definition-of-Q} Q_{ijk} := \langle \nabla_k^\perp \mathring A_{ij}, \nu_1 \rangle - \langle \nabla_k^\perp \hat A_{ij}, \nu_1 \rangle - |H|^{-1} \mathring h_{ij} \nabla_k|H|.
\end{equation}
\end{proposition}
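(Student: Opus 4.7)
The plan is to exploit the identity
\[
|\hat A|^2 = |A|^2 - \frac{|\langle A_{ij}, H\rangle|^2}{|H|^2},
\]
which follows directly from the definition \eqref{def-hat-A}, and to compute the heat operator applied to each summand separately using the machinery from Section 2. For $|A|^2$ I would use Proposition \ref{evolution-equations} directly. For the quotient I would apply the identity \eqref{useful-identity} with $u = |\langle A_{ij}, H\rangle|^2$ and $v = |H|^2$. The ingredient that requires work is $(\partial_t - \Delta)|\langle A_{ij}, H\rangle|^2$: I would pair the evolution equations for $A_{ij}$ and $H$ with $\langle A_{ij}, H\rangle$, account for $\partial_t g^{ij} = 2\langle A^{ij}, H\rangle$, and observe that one pair of cubic reaction terms cancels, leaving four reaction terms together with a mixed gradient term $\langle \nabla^\perp A, \nabla^\perp H\rangle\langle A, H\rangle$ and the base gradient $|\nabla \langle A, H\rangle|^2$.

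Next I would expand every reaction and gradient quantity using the normal decomposition $NM = E_1 \oplus \hat E$. Proposition \ref{decompreac} rewrites the reaction pieces immediately: $|\langle A, A\rangle|^2$ produces $|\langle \hat A, \hat A\rangle|^2 + 2|\mathring h \hat A|^2 + |h|^4$, while $|R^\perp|^2$ splits as $|\hat R^\perp|^2 + 2|R^\perp(\nu_1)|^2$. Matching across the two contributions, the purely principal pieces $|h|^4$ and $|\mathring h \hat A|^2$ cancel against their counterparts coming from the quotient. The gradient contribution $-2|\nabla^\perp A|^2$ expands via \eqref{derivative-of-A} as $-2|\nabla^\perp \hat A|^2 - 2|\nabla h|^2 - 2|h|^2 |\nabla^\perp \nu_1|^2$ plus two cross terms, and these combine with the cross terms from $-2|\nabla(|H|h_{ij})|^2$ and from the quotient-rule gradient so that the diagonal pieces $|\nabla h|^2$ and $|h|^2|\nabla^\perp \nu_1|^2$ cancel completely, leaving only $-2|\nabla^\perp \hat A|^2$ and a short sum of mixed pairings.

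To package these mixed pairings, I would use the orthogonality relations $\langle \hat A_{ij}, \nu_1\rangle = 0$ to write $\langle \nabla_k^\perp \hat A_{ij}, \nu_1\rangle = -\langle \hat A_{ij}, \nabla_k^\perp \nu_1\rangle$, and also decompose $\nabla_k \mathring h_{ij} = \langle \nabla_k^\perp \mathring A_{ij}, \nu_1\rangle - \langle \nabla_k^\perp \hat A_{ij}, \nu_1\rangle$ using $\mathring A = \hat A + \mathring h\nu_1$. These two rewrites allow me to factor the remaining mixed contribution in the form $4\sum_{i,j,k} Q_{ijk}\langle \hat A_{ij}, \nabla_k^\perp \nu_1\rangle$ with $Q_{ijk}$ as in \eqref{definition-of-Q}, yielding the stated identity.

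The hard part is combinatorial rather than conceptual. The cross term $8|H|^{-1} h_{ij}\nabla_k|H|\nabla_k h_{ij}$ arises in two distinct places — from the last term of the quotient rule \eqref{useful-identity} and from the expansion of $|\nabla(|H|h_{ij})|^2$ in the evolution of $|\langle A, H\rangle|^2$ — and these contributions, together with the cross term $-4\mathring h_{ij}\langle \nabla_k^\perp \hat A_{ij}, \nabla_k^\perp \nu_1\rangle$ from the evolution of $|\langle A, H\rangle|^2$ and the analogous cross term in the expansion of $|\nabla^\perp A|^2$, must be collected with matching signs. The main check is that all of these intermediate gradient contributions conspire so that only $-2|\nabla^\perp \hat A|^2$ and the $Q_{ijk}$-weighted pairing survive in the final formula.
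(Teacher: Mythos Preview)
Your proposal is correct and follows essentially the same route as the paper: write $|\hat A|^2 = |A|^2 - |\langle A_{ij},H\rangle|^2/|H|^2$, apply \eqref{useful-identity} to the quotient, compute $(\partial_t-\Delta)|\langle A_{ij},H\rangle|^2$ from the evolution equations of $A$ and $H$, decompose all reaction terms via Proposition~\ref{decompreac} and all gradient terms via \eqref{derivative-of-A}--\eqref{derivative-of-H}, and then use the orthogonality relations to collapse the surviving mixed gradient pairings into the $Q_{ijk}$ form. Your bookkeeping of the cross terms (in particular the double appearance of $8|H|^{-1}h_{ij}\nabla_k|H|\nabla_kh_{ij}$ and the cancellation of $|\nabla h|^2$ and $|h|^2|\nabla^\perp\nu_1|^2$) matches the paper's computation exactly.
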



\section{Proof of Theorem 1.1.}

In a similar fashion to the proof of the pointwise derivative estimate in \cite{Ngu18}, will compare the evolution of $|\hat A|^2$ to the evolution of $c |H|^2 -|A|^2$ along the mean curvature flow. Recall the setup of Theorem \ref{main}: we are given an $n$-dimensional mean curvature flow $M_t = F(M,t) \subset \mathbb{R}^N$ which satisfies $c_n |H|^2 > |A|^2$ initially, where
\[
c_n := \begin{cases} \frac{3(n+1)}{2n(n+2)} & \text{ if } \; n = 5, 6, \text{ or } 7 \\ \frac{4}{3n} & \text{ if } \; n \geq 8 \end{cases}. 
\]
Note $c_n = \min\{\frac{3(n+1)}{2n(n+2)}, \frac{4}{3n} \}$. As $M$ is compact, we can find small real numbers $\varepsilon_0, \varepsilon_1>0$ depending on $M_0$ such that $(c_n - \varepsilon_0) |H|^2 - |A|^2 > \varepsilon_1$ holds everywhere on $M$ initially. Define a new constant
\[
c_0 :=  \begin{cases} \frac{3(n+1)}{2n(n+2)} -\varepsilon_0& \text{ if } \; n = 5, 6, \text{ or } 7 \\ \frac{4}{3n} & \text{ if } \; n \geq 8 \end{cases}
\]
and 
\begin{equation}
\label{definition-of-f} f:= c_0 |H|^2 - |A|^2. 
\end{equation}
Note that $c_0 \leq c_n$ only depends upon initial data if $n =5, 6$, or $7$. The subtraction of $\varepsilon_0$ in these lower dimensions is because we need a bit more breathing room in our estimates than the critical dimensional constant $\frac{3(n+1)}{2n(n+2)}$ allows. By Theorem 2 in \cite{AB10} (note Remark \ref{notation-difference}), we have $f > \varepsilon_1$ on $M \times [0, T)$. In particular, $|H| > 0$ on $M \times [0, T)$.  

Let $\delta > 0$ be a small constant, which we will determine towards the end of the proof. We computed the evolution equation of $|\hat A|^2$ in the previous section. By \eqref{evolution-of-|A|^2} and \eqref{evolution-of-|H|^2} in Proposition \ref{evolution-equations}, the evolution equation for $f$ is given by
\begin{equation}
\label{evolution-equation-of-f}\Big(\frac{\partial}{\partial t}- \Delta \Big)f = 2(|\nabla^\perp A|^2 - c_0 |\nabla^\perp H|^2)+ 2(c_0|\langle A_{ij}, H \rangle|^2 - |\langle A_{ij}, A_{kl} \rangle|^2 - |R_{ij}^\perp|^2). 
\end{equation}
The pinching condition implies that both terms on the right-hand side of \eqref{evolution-equation-of-f} are nonnegative on $M \times [0, T)$. This is proven in \cite{AB10} and stated in \cite{Ngu18}, but we will establish this ourselves in Lemma \ref{evolution-inequality-of-f} below.

The first step of the proof of Theorem \ref{main}, and our main task, is to analyze the evolution equation of the scale-invariant quantity $|\hat A|^2/f$. We will show this ratio satisfies a favorable evolution equation with a right-hand side that has a nonpositive term. Specifically, we will show that 
\begin{equation}
\label{goal} \Big(\frac{\partial}{\partial t} - \Delta\Big)\frac{|\hat A|^2}{f} \leq 2 \Big\langle \nabla \frac{|\hat A|^2}{f} , \nabla \log f \Big\rangle - \delta\frac{|\hat A|^2}{f^2} \Big(\frac{\partial}{\partial t} - \Delta \Big) f.
\end{equation}
Then we will analyze the evolution of the non-scale-invariant quantity $|\hat A|^2/f^{1-\sigma}$. We will show for $\sigma > 0$ sufficiently small, the nonpositive term above can be used to control the nonnegative terms introduced by the additional factor of $f^{\sigma}$. Theorem \ref{main} will then follow from the maximum principle. 

By equation \eqref{evolution-equation-of-f}, Proposition \ref{evolution-of-hatA}, and the useful identity \eqref{useful-identity}, the evolution equation of $|\hat A|^2/f$ is given by
\begin{align*}
\Big(\frac{\partial}{\partial t} - \Delta\Big)\frac{|\hat A|^2}{f} & = \frac{1}{f}\Big(\frac{\partial}{\partial t} - \Delta\Big)|\hat A|^2 - |\hat A|^2\frac{1}{ f^2}\Big(\frac{\partial}{\partial t} -\Delta\Big)f + 2 \Big\langle \nabla \frac{|\hat A|^2}{f}, \nabla \log  f \Big\rangle\\
& = \frac{1}{ f} \Big( 2|\langle \hat A_{ij}, \hat A_{kl} \rangle|^2 + 2|\hat R^\perp|^2 + 2|R^\perp_{ij}(\nu_1)|^2\Big) + \frac{1}{f} \Big(- 2|\nabla^\perp \hat A|^2 + 4 Q_{ijk} \langle \hat A_{ij}, \nabla_k^\perp \nu_1 \rangle\Big) \\
& \quad - |\hat A|^2\frac{1}{ f^2} \Big(2(|\nabla^\perp A|^2-c_0|\nabla^\perp H|^2)\Big)  - |\hat A|^2\frac{1}{ f^2} \Big( 2(c_0|\langle A_{ij}, H \rangle|^2 - |\langle A_{ij}, A_{kl} \rangle|^2 - |R^\perp_{ij}|^2)\Big)\\
 & \quad+  2 \Big\langle \nabla \frac{|\hat A|^2}{ f}, \nabla \log f \Big\rangle.
 \end{align*}
Rearranging these terms, we have
 \begin{align}
\label{equation-of-ratio}\Big(\frac{\partial}{\partial t} - \Delta\Big)\frac{|\hat A|^2}{f}  & = \frac{1}{ f} \Big(2|\langle \hat A_{ij}, \hat A_{kl} \rangle|^2 + 2|\hat R^\perp|^2 + 2|R^\perp_{ij}(\nu_1)|^2 \Big)\\
\nonumber & \quad - \frac{1}{f}\Big(2\frac{|\hat A|^2}{ f} (c_0|\langle A_{ij}, H \rangle|^2 - |\langle A_{ij}, A_{kl} \rangle|^2 - |R^\perp_{ij}|^2)\Big)\\
\nonumber & \quad + \frac{1}{ f} \Big(4Q_{ijk} \langle \hat A_{ij}, \nabla^\perp_k \nu_1 \rangle -  2|\nabla^\perp \hat A|^2 - 2\frac{|\hat A|^2}{ f}(|\nabla^\perp A|^2-c_0|\nabla^\perp H|^2)\Big) \\
\nonumber & \quad+  2 \Big\langle \nabla \frac{|\hat A|^2}{ f}, \nabla \log f \Big\rangle.
\end{align}

We analyze the right-hand side in two steps. We must estimate the reaction terms on the first line by the reaction terms on the second line and the gradient term $4Q_{ijk}\langle \hat A_{ij}, \nabla_k^\perp \nu_1 \rangle$ by the good Bochner terms coming from the evolution of $|\hat A|^2$ and $f$.  \\

\subsection{Reaction Term Estimates}
We begin by estimating the reaction terms. We will make use of the following two estimates. The first estimate is proven on page 372 in \cite{AB10} Section 3. The second estimate is a matrix inequality which is Theorem 1 in \cite{LL92}. 
\begin{lemma}\label{ab_est}
\begin{align}
\label{simpler-inequality}\Big|\sum_{i,j =1}^n \mathring h_{ij} \hat A_{ij} \Big|^2 + |R_{ij}^\perp(\nu_1)|^2 &\leq 2|\mathring h|^2|\hat A|^2, \\
\label{matrix-inequality} |\langle \hat A_{ij}, \hat A_{kl} \rangle|^2 + |\hat R^\perp|^2 &\leq \frac{3}{2}|\hat A|^4.
\end{align}
\end{lemma}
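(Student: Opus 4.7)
The plan is to observe that both inequalities are pointwise algebraic statements which reduce to matrix inequalities for the family of symmetric traceless $n \times n$ matrices $B^\alpha_{ij} := \hat A_{ij\alpha}$, indexed by $\alpha \in \{2, \dots, N - n\}$ in a local orthonormal normal frame $\{\nu_\alpha\}$ with $\nu_1 = H/|H|$. Writing $\|\cdot\|$ for the Frobenius norm on $n \times n$ matrices, the decompositions from Section 2 combined with the symmetry of each $B^\alpha$ give $|\hat A|^2 = \sum_\alpha \|B^\alpha\|^2$, $\bigl|\sum_{i,j}\mathring h_{ij}\hat A_{ij}\bigr|^2 = \sum_\alpha (\operatorname{tr}(\mathring h B^\alpha))^2$, $|R^\perp_{ij}(\nu_1)|^2 = \sum_\alpha \|[\mathring h, B^\alpha]\|^2$, $|\langle \hat A_{ij}, \hat A_{kl}\rangle|^2 = \sum_{\alpha,\beta}(\operatorname{tr}(B^\alpha B^\beta))^2$, and $|\hat R^\perp|^2 = \sum_{\alpha,\beta}\|[B^\alpha, B^\beta]\|^2$, with all sums over $\alpha, \beta \ge 2$.

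For (\ref{simpler-inequality}), the plan is to prove the two-matrix inequality
\[
(\operatorname{tr}(PQ))^2 + \|[P, Q]\|^2 \le 2 \|P\|^2 \|Q\|^2
\]
for arbitrary symmetric $n \times n$ matrices $P, Q$, and then apply it with $P = \mathring h$ and $Q = B^\alpha$ before summing in $\alpha$. I would prove the two-matrix bound by diagonalizing $P = \operatorname{diag}(p_1, \dots, p_n)$ and rewriting
\[
2 \|P\|^2 \|Q\|^2 - \|[P, Q]\|^2 = \sum_{i,j} \bigl( 2 \|P\|^2 - (p_i - p_j)^2 \bigr) Q_{ij}^2.
\]
Every coefficient on the right is nonnegative because $(p_i - p_j)^2 \le 2(p_i^2 + p_j^2) \le 2 \|P\|^2$, so the right side is at least its $i = j$ contribution $2\|P\|^2 \sum_i Q_{ii}^2$. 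A Cauchy--Schwarz bound on the trace then yields $(\operatorname{tr}(PQ))^2 = (\sum_i p_i Q_{ii})^2 \le \|P\|^2 \sum_i Q_{ii}^2 \le 2 \|P\|^2 \sum_i Q_{ii}^2$, which closes the estimate. This is essentially the argument on p.~372 of Andrews--Baker \cite{AB10}.

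For (\ref{matrix-inequality}), the reformulation above exhibits the claim as the purely algebraic inequality
\[
\sum_{\alpha, \beta} \bigl( \operatorname{tr}(B^\alpha B^\beta) \bigr)^2 + \sum_{\alpha, \beta} \|[B^\alpha, B^\beta]\|^2 \le \tfrac{3}{2} \Bigl( \sum_\alpha \|B^\alpha\|^2 \Bigr)^2
\]
for any finite collection of traceless symmetric $n \times n$ matrices $\{B^\alpha\}$. This is precisely Theorem 1 of Li--Li \cite{LL92}, which I would quote directly. The main obstacle in this step is the Li--Li inequality itself: applying the two-matrix bound from the previous paragraph termwise to each pair $(\alpha, \beta)$ produces only the constant $2$ on the right, and obtaining the sharp constant $\tfrac{3}{2}$ requires the more delicate coupled analysis of pairs carried out in \cite{LL92}, which simultaneously handles the diagonal pairs $\alpha = \beta$ (contributing the quartic terms $\|B^\alpha\|^4$) and the off-diagonal pairs. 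Together these two steps give the lemma.
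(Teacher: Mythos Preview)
Your proposal is correct and follows essentially the same route as the paper: for \eqref{simpler-inequality} the paper likewise diagonalizes $\mathring h$, bounds $(\lambda_i-\lambda_j)^2\le 2|\mathring h|^2$ on the off-diagonal terms and applies Cauchy--Schwarz on the diagonal terms, exactly your two-matrix inequality applied componentwise; for \eqref{matrix-inequality} the paper also reduces to Theorem~1 of \cite{LL92} (after noting the trivial single-matrix case $N-n=2$, which you absorb into the general statement).
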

\begin{proof}
The arguments given in \cite{AB10} to prove inequality \eqref{simpler-inequality} are simple and short, so we will repeat them in our notation here. We will express inequality \eqref{matrix-inequality} so that it is an immediate consequence of Theorem 1 in \cite{LL92}. 

Fix any point $p \in M$ and time $t \in [0, T)$. Let $e_1, \dots, e_n$ be an orthonormal basis which identifies $T_pM \cong \mathbb{R}^n$ at time $t$ and then choose $\nu_2, \dots, \nu_{N-n}$ to be a basis of the orthogonal complement of principal normal $\nu_1$ in $N_pM$ at time $t$. For each $\beta \in \{2, \dots, N-n\}$, define a matrix $A_{\beta} = \langle A, \nu_\beta\rangle$ whose components are given by $(A_\beta)_{ij} = A_{ij\beta}$.  Note, by definition, $\hat A_{ij\beta} = A_{ij\beta}$ when $\beta \geq 2$, but we want to match the notation of \cite{LL92} for the moment. Then $\hat A = \sum_{\beta =2}^{N-n} A_\beta \nu_\beta$. We also have $\mathring h = \langle \mathring A, \nu_1\rangle$. 

To prove \eqref{simpler-inequality}, let $\lambda_1, \dots, \lambda_n$ denote the eigenvalues of $\mathring h$. Assume the orthonormal basis is an eigenbasis of $\mathring h$.  Now
\begin{align*}
\Big|\sum_{i,j =1}^n \mathring h_{ij} \hat A_{ij} \Big|^2 &= \sum_{\beta = 2}^{N-n} \sum_{i,j,k,l =1}^n \mathring h_{ij} \mathring h_{kl} A_{ij\beta} A_{kl\beta} \\
&= \sum_{\beta =2}^{N-n}\Big(\sum_{i,j =1}^n \mathring h_{ij} A_{ij\beta}\Big)^2 \\
& = \sum_{\beta =2}^{N-n}\Big(\sum_{i=1}^n \lambda_iA_{ii\beta}\Big)^2.
\end{align*}
By Cauchy-Schwarz, 
\begin{equation}
\label{ineq1}\Big|\sum_{i,j =1}^n \mathring h_{ij} \hat A_{ij} \Big|^2 \leq \sum_{\beta =2}^{N-n}\Big(\sum_{i=1}^n \lambda_j^2\Big) \Big(\sum_{i=1}^n A_{ii\beta}^2\Big) =|\mathring h|^2 \sum_{\beta =2}^{N-n} \sum_{i=1}^n A_{ii\beta}^2.
\end{equation}
By \eqref{norm-of-normal-curvature-in-principal-direction},
\begin{align*}
|R^\perp_{ij}(\nu_1)|^2 &=\sum_{\beta = 2}^{N-2}  \sum_{i,j =1}^n \Big(\sum_{k=1}^n (\mathring h_{ik} A_{jk\beta} - \mathring h_{jk} A_{ik\beta})\Big)^2 \\
& = \sum_{\beta = 2}^{N-2} \sum_{i,j =1}^n (\lambda_i - \lambda_j)^2 A_{ij \beta}^2 \\
& =  \sum_{\beta = 2}^{N-2}\sum_{i \neq j} (\lambda_i - \lambda_j)^2 A_{ij \beta}^2. 
\end{align*}
Since $(\lambda_i - \lambda_j)^2 \leq 2(\lambda_i^2 + \lambda_j^2) \leq 2|\mathring h|^2$, we have
\begin{equation}
\label{ineq2}|R^\perp_{ij}(\nu_1)|^2 \leq 2|\mathring h|^2 \sum_{\beta = 2}^{N-2}  \sum_{i \neq j}A_{ij \beta}^2.
\end{equation}
Summing \eqref{ineq1} and \eqref{ineq2}, we obtain 
\[
\Big|\sum_{i,j =1}^n \mathring h_{ij} \hat A_{ij} \Big|^2 + |R^\perp_{ij}(\nu_1)|^2 \leq |\mathring h|^2 \sum_{\beta =2}^{N-n} \sum_{i=1}^n A_{ii\beta}^2  + 2|\mathring h|^2 \sum_{\beta = 2}^{N-2} \sum_{i \neq j}A_{ij \beta}^2 \leq 2 |\mathring h|^2 |\hat A|^2, 
\]
which is \eqref{simpler-inequality}

To establish \eqref{matrix-inequality}, for $\alpha,\beta \in \{2, \dots, N-n\}$ define
\[
S_{\alpha\beta} := \mathrm{tr}( A_\alpha  A_\beta) = \sum_{i, j =1}^n A_{ij\alpha}A_{ij\beta}\qquad \text{and} \qquad S_{\alpha} := |A_\alpha|^2 = \sum_{i, j =1}^n A_{ij\alpha}A_{ij\alpha}. 
\]
Let $S := S_2 + \cdots + S_{N-n} = |\hat A|^2$. 

Now
\begin{align*}
 |\langle \hat A_{ij}, \hat A_{kl} \rangle|^2 &= \sum_{i,j,k,l =1}^n \sum_{\alpha, \beta =2}^{N-n} A_{ij\alpha}A_{kl\alpha}A_{ij\beta}A_{kl\beta}\\
 & = \sum_{\alpha\beta = 2}^{N-n} \Big(\sum_{i, j = 1}^n A_{ij\alpha}A_{ij\beta}\Big)\Big(\sum_{k,l= 1}^n A_{kl\alpha}A_{kl\beta}\Big) \\
 & = \sum_{\alpha,\beta =2}^{N-n} S_{\alpha\beta}^2. 
\end{align*}
In addition, recalling the definition \eqref{hat-R-perp}, we may write 
\[
|\hat R^\perp|^2 = \sum_{\alpha,\beta =2}^{N-n} \big|A_\alpha A_\beta - A_\beta A_\alpha\big|^2,
\]
where $(A_\alpha A_\beta)_{ij} = (A_\alpha)_{ik} (A_\beta)_{kj} =(A_\alpha)_{ik} (A_\beta)_{jk}$ denotes standard matrix multiplication and $|\cdot |$ is the usual square norm of the matrix. We see that inequality \eqref{matrix-inequality} is equivalent to 
\begin{equation}
\label{ll92}\sum_{\alpha,\beta =2}^{N-n} \big|A_\alpha A_\beta - A_\beta A_\alpha\big|^2 +  \sum_{\alpha,\beta =2}^{N-n} S_{\alpha\beta}^2 \leq \frac{3}{2} S^2. 
\end{equation}
Now if $N- n =2$, inequality \eqref{matrix-inequality} is trivial since $|R^\perp|^2 = 0$ and $|\langle \hat A_{ij}, \hat A_{kl} \rangle|^2 = |\hat A|^4$. Otherwise, if $N - n \geq 3$, inequality \eqref{ll92} is precisely Theorem 1 in \cite{LL92}. This completes the proof. 
\end{proof}

As an immediate consequence of the previous lemma, we have the following estimate for the reaction terms coming from the evolution of $|\hat A|^2$. 
\begin{lemma}[Upper bound for the reaction terms of $(\partial_t - \Delta)|\hat A|^2$] \label{upperA}
\begin{equation}
\label{reaction-term-estimate-hat-A}|\langle \hat A_{ij}, \hat A_{kl} \rangle|^2 +  |\hat R^\perp|^2 + |R^\perp_{ij}(\nu_1)|^2 \leq \frac{3}{2}|\hat A|^4 + 2|\mathring h|^2|\hat A|^2.
\end{equation}
\end{lemma}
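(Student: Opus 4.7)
The plan is to observe that this lemma is an immediate corollary of the two inequalities just established in Lemma \ref{ab_est}. Indeed, the three terms on the left-hand side split naturally into the two groups that appear in Lemma \ref{ab_est}: the pair $|\langle \hat A_{ij}, \hat A_{kl}\rangle|^2 + |\hat R^\perp|^2$, which is controlled by the matrix inequality \eqref{matrix-inequality}, and the single term $|R^\perp_{ij}(\nu_1)|^2$, which is controlled by \eqref{simpler-inequality}.

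More concretely, I would first apply \eqref{matrix-inequality} to obtain
\[
|\langle \hat A_{ij}, \hat A_{kl}\rangle|^2 + |\hat R^\perp|^2 \leq \tfrac{3}{2}|\hat A|^4.
\]
Then I would use \eqref{simpler-inequality}, discarding the nonnegative quantity $|\sum_{ij} \mathring h_{ij} \hat A_{ij}|^2$ on the left-hand side, to obtain
\[
|R^\perp_{ij}(\nu_1)|^2 \leq 2|\mathring h|^2 |\hat A|^2.
\]
Adding these two inequalities yields the claimed bound.

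There is essentially no obstacle here: all the real work has already been done in Lemma \ref{ab_est}, whose two component inequalities were set up precisely to bound exactly the reaction terms appearing on the left-hand side of \eqref{reaction-term-estimate-hat-A}. The only ``choice'' is the decision to throw away the $|\sum_{ij}\mathring h_{ij}\hat A_{ij}|^2$ term from \eqref{simpler-inequality}; this is harmless since that term is nonnegative and does not appear on the left-hand side of the lemma.
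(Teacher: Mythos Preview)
Your proposal is correct and matches the paper's approach exactly: the paper states this lemma as an ``immediate consequence'' of Lemma \ref{ab_est} without further proof, and your argument—adding \eqref{matrix-inequality} to \eqref{simpler-inequality} with the nonnegative term $|\sum_{ij}\mathring h_{ij}\hat A_{ij}|^2$ discarded—is precisely how that immediate consequence is obtained.
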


Next we express the reaction term in the evolution of $f$ in terms of $\hat A$, $\mathring h$, and $|H|$. In view of \eqref{definition-of-f}, observe that
\begin{equation}
\label{H-terms}\frac{nc_0 -1}{n}|H|^2 = |\hat A|^2 + |\mathring h|^2 + f. 
\end{equation}
We have following lower bound for the reaction terms in the evolution of $f$. 
\begin{lemma}[Lower bound for the reaction terms of $(\partial_t - \Delta)f$]\label{lowerf}
If $\frac{1}{n} < c_0 \leq \frac{4}{3n}$, then
\begin{equation}
\label{lower-bound-reaction-terms-of-f}\frac{|\hat A|^2}{f} (c_0|\langle A_{ij}, H \rangle|^2 - |\langle A_{ij}, A_{kl} \rangle|^2 - |R^\perp_{ij}|^2) \geq \frac{2}{nc_0 -1} |\hat A|^4 +  \frac{nc_0}{nc_0- 1}|\mathring h|^2 |\hat A|^2.
\end{equation}
\end{lemma}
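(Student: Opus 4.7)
The plan is to expand the reaction quantity using the decomposition identities of Proposition \ref{decompreac} and then bound the normal-bundle cross terms via Lemma \ref{ab_est}. Substituting the three formulas from Proposition \ref{decompreac} directly gives
\[
c_0|\langle A_{ij}, H\rangle|^2 - |\langle A_{ij}, A_{kl}\rangle|^2 - |R^\perp_{ij}|^2 = c_0|H|^2|h|^2 - |h|^4 - 2|\mathring h_{ij}\hat A_{ij}|^2 - |\langle \hat A_{ij}, \hat A_{kl}\rangle|^2 - |\hat R^\perp|^2 - 2|R^\perp_{ij}(\nu_1)|^2.
\]
Applying the two inequalities of Lemma \ref{ab_est} to the four nonlinear terms on the right yields the cleaner lower bound
\[
c_0|\langle A_{ij}, H\rangle|^2 - |\langle A_{ij}, A_{kl}\rangle|^2 - |R^\perp_{ij}|^2 \geq c_0|H|^2|h|^2 - |h|^4 - 4|\mathring h|^2|\hat A|^2 - \tfrac{3}{2}|\hat A|^4.
\]

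Next I would rewrite the leading term $c_0|H|^2|h|^2 - |h|^4$ using the two algebraic identities $|h|^2 = \frac{1}{n}|H|^2 + |\mathring h|^2$ and \eqref{H-terms}, which together imply $c_0|H|^2 - |h|^2 = |\hat A|^2 + f$ and $|h|^2 = (|\hat A|^2 + nc_0|\mathring h|^2 + f)/(nc_0-1)$. The first gives the factorization $c_0|H|^2|h|^2 - |h|^4 = |h|^2(|\hat A|^2 + f)$. Multiplying the preceding lower bound by $|\hat A|^2/f$ and substituting the second identity, a routine expansion produces
\begin{align*}
\frac{|\hat A|^2}{f}\big(c_0|\langle A_{ij}, H\rangle|^2 - |\langle A_{ij}, A_{kl}\rangle|^2 - |R^\perp_{ij}|^2\big) &\geq \frac{2|\hat A|^4}{nc_0-1} + \frac{nc_0|\mathring h|^2|\hat A|^2}{nc_0-1} \\
&\quad + \frac{f|\hat A|^2}{nc_0-1} + \frac{(5-3nc_0)|\hat A|^6}{2(nc_0-1)f} + \frac{(4-3nc_0)|\mathring h|^2|\hat A|^4}{(nc_0-1)f}.
\end{align*}

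Each term on the second line is nonnegative: $c_0 > \frac{1}{n}$ gives $nc_0 - 1 > 0$, while $c_0 \leq \frac{4}{3n}$ gives $5 - 3nc_0 \geq 1$ and $4 - 3nc_0 \geq 0$. Discarding them yields the claimed inequality. The main delicate point is that this argument is essentially sharp: the coefficient $4 - 3nc_0$ in front of $|\mathring h|^2|\hat A|^4/f$ first vanishes precisely at $c_0 = \frac{4}{3n}$, so the hypothesis on $c_0$ cannot be relaxed by this method. This is the same critical value that bounds the Andrews-Baker pinching-preservation theorem, and its reappearance here traces back to the sharpness of the Li-Li matrix inequality $|\langle \hat A_{ij}, \hat A_{kl}\rangle|^2 + |\hat R^\perp|^2 \leq \frac{3}{2}|\hat A|^4$ used inside Lemma \ref{ab_est}.
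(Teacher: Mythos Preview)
Your argument is correct and follows essentially the same route as the paper: expand the reaction quantity via Proposition~\ref{decompreac}, replace $|H|^2$ and $|h|^2$ using the algebraic identity~\eqref{H-terms}, and absorb the cross terms with the two inequalities of Lemma~\ref{ab_est}. The only cosmetic difference is the order of operations: the paper first substitutes~\eqref{H-terms} (twice) to isolate a factor of $f$ in every surviving term and applies Lemma~\ref{ab_est} at the end, whereas you apply Lemma~\ref{ab_est} first and then multiply through by $|\hat A|^2/f$; your bookkeeping leaves visible the extra nonnegative remainders $\frac{f|\hat A|^2}{nc_0-1}$, $\frac{(5-3nc_0)|\hat A|^6}{2(nc_0-1)f}$, and $\frac{(4-3nc_0)|\mathring h|^2|\hat A|^4}{(nc_0-1)f}$, which the paper's ordering discards implicitly.
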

\begin{proof}
We do a computation that is similar to a computation in \cite{AB10}, except we do not throw away the pinching term $f$. By \eqref{trace-and-tracless}, \eqref{A-dot-H}, \eqref{norm-of-A,A}, \eqref{norm-of-normal-curvature}, we have
\begin{align*}
&c_0|\langle A_{ij}, H\rangle |^2 - |\langle A_{ij}, A_{kl} \rangle|^2 - |R_{ij}^\perp|^2 \\
& \quad= \frac{1}{n}c_0|H|^4 + c_0|\mathring h|^2 |H|^2 \\
& \quad \quad - |\mathring h|^4 - \frac{2}{n}|\mathring h|^2 |H|^2 - \frac{1}{n^2}|H|^4 - 2|\mathring h^{ij}\hat A_{ij}|^2 - |\langle \hat A_{ij}, \hat A_{kl} \rangle|^2 \\
& \quad \quad - |\hat R^\perp|^2 - 2 |R_{ij}^\perp(\nu_1)|^2 \\
&\quad =\frac{1}{n}(c_0 - \frac{1}{n})|H|^4+ (c_0 - \frac{1}{n} )|\mathring h|^2 |H|^2  - \frac{1}{n}|\mathring h|^2 |H|^2-|\mathring h|^4 \\
& \quad \quad   - 2|\mathring h_{ij} \hat A_{ij} |^2 - 2|R_{ij}^\perp(\nu_1)|^2 - |\langle \hat A_{ij}, \hat A_{kl} \rangle|^2  - |\hat R^\perp|^2.
\end{align*}
Use \eqref{H-terms} and cancel terms to get
\begin{align*}
&c_0|\langle A_{ij}, H\rangle |^2 - |\langle A_{ij}, A_{kl} \rangle|^2 - |R_{ij}^\perp|^2 \\
& \quad= \frac{1}{n}(f  + |\hat A|^2 + |\mathring h|^2)|H|^2 + (f + |\hat A|^2 + |\mathring h|^2 )|\mathring h|^2 - \frac{1}{n}|\mathring h|^2 |H|^2  -|\mathring h|^4 \\
&\quad \quad  - 2|\mathring h_{ij} \hat A_{ij} |^2 - 2|R_{ij}^\perp(\nu_1)|^2 - |\langle \hat A_{ij}, \hat A_{kl} \rangle|^2  - |\hat R^\perp|^2 \\
&\quad = \frac{1}{n}(f  + |\hat A|^2)|H|^2 + (f + |\hat A|^2 )|\mathring h|^2 \\
&\quad\quad  - 2|\mathring h_{ij} \hat A_{ij} |^2 - 2|R_{ij}^\perp(\nu_1)|^2 - |\langle \hat A_{ij}, \hat A_{kl} \rangle|^2 -|\hat R^\perp|^2.
\end{align*}
Using \eqref{H-terms} once more for the remaining factor of $|H|^2$ gives
\begin{align*}
&c_0|\langle A_{ij}, H\rangle |^2 - |\langle A_{ij}, A_{kl} \rangle|^2 - |R_{ij}^\perp|^2 \\
& \quad= \frac{1}{n}(f  + |\hat A|^2)\Big(c_0 -\frac{1}{n}\Big)^{-1}(f + |\hat A|^2 + |\mathring h|^2) + (f + |\hat A|^2 )|\mathring h|^2 \\
&\quad\quad  - 2|\mathring h_{ij} \hat A_{ij} |^2 - 2|R_{ij}^\perp(\nu_1)|^2 - |\langle \hat A_{ij}, \hat A_{kl} \rangle|^2  -|\hat R^\perp|^2 \\
& \quad= \frac{1}{nc_0 -1}f(f +2|\hat A|^2+|\mathring h|^2) + f|\mathring h|^2+ \frac{1}{nc_0 -1} |\hat A|^4 + \frac{nc_0}{nc_0 -1}|\hat A|^2|\mathring h|^2 \\
&\quad\quad  - 2|\mathring h_{ij} \hat A_{ij} |^2 - 2|R_{ij}^\perp(\nu_1)|^2 - |\langle \hat A_{ij}, \hat A_{kl} \rangle|^2  - |\hat R^\perp|^2. 
\end{align*}
Now by the two estimates in Lemma \ref{ab_est}
\[
2|\mathring h_{ij} \hat A_{ij} |^2 + 2|R_{ij}^\perp(\nu_1)|^2 +  |\langle \hat A_{ij}, \hat A_{kl} \rangle|^2  +|\hat R^\perp|^2 \leq 4|\mathring h|^2 |\hat A|^2 + \frac{3}{2}|\hat A|^4. 
\]
Therefore
\begin{align*}
\frac{1}{nc_0 -1} &|\hat A|^4 + \frac{nc_0}{nc_0 -1}|\hat A|^2|\mathring h|^2 - 2|\mathring h^{ij} \hat A_{ij} |^2 - 2|R_{ij}^\perp(\nu_1)|^2 - |\langle \hat A_{ij}, \hat A_{kl} \rangle|^2  - |\hat R^\perp|^2 \\
& \geq \Big(\frac{1}{nc_0 -1} - \frac{3}{2} \Big)|\hat A|^4 + \Big(\frac{nc_0}{nc_0 -1} - 4 \Big)|\mathring h|^2|\hat A|^2. 
\end{align*}
Since $c_0 \leq \frac{4}{3n}$, we have
\[
\frac{1}{nc_0 -1} - \frac{3}{2} \geq \frac{3}{2}, \qquad \frac{nc_0}{nc_0 -1} - 4 \geq 0. 
\]
Consequently, we have
\begin{align}
\label{for-evolution-of-f} c_0|\langle A_{ij}, H\rangle |^2 - |\langle A_{ij}, A_{kl} \rangle|^2 - |R_{ij}^\perp|^2  &\geq \frac{2}{nc_0 -1} f |\hat A|^2 + \frac{nc_0}{nc_0 -1} f|\mathring h|^2 + \frac{1}{nc_0-1} f^2\\
\nonumber & \geq \frac{2}{nc_0 -1} f |\hat A|^2 + \frac{nc_0}{nc_0 -1} f|\mathring h|^2.
\end{align}
Multiplying both sides by $\frac{|\hat A|^2}{f}$ completes the proof of the lemma. 
\end{proof}

Putting Lemmas \ref{upperA} and \ref{lowerf} together, we have 
\begin{lemma}[Reaction term estimate]\label{reactionest}
If $0 < \delta \leq \frac{1}{2}$ and $\frac{1}{n} < c_0 \leq \frac{4}{3n}$, then
\begin{equation}
\label{reaction-term-estimate-final}|\langle \hat A_{ij}, \hat A_{kl} \rangle|^2 +  |\hat R^\perp|^2  + |R^\perp_{ij}(\nu_1)|^2  \leq (1-\delta) \frac{|\hat A|^2}{f} (c_0|\langle A_{ij}, H \rangle|^2 - |\langle A_{ij}, A_{kl} \rangle|^2 - |R^\perp_{ij}|^2). 
\end{equation}
\end{lemma}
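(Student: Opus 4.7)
The plan is to deduce the lemma by simply chaining Lemma \ref{upperA} with Lemma \ref{lowerf} and comparing coefficients. First I would invoke Lemma \ref{upperA} to bound the left-hand side of \eqref{reaction-term-estimate-final} above by $\tfrac{3}{2}|\hat A|^4 + 2|\mathring h|^2 |\hat A|^2$. Then I would apply Lemma \ref{lowerf} to bound the right-hand side from below by
\[
(1-\delta)\Big(\tfrac{2}{nc_0-1}|\hat A|^4 + \tfrac{nc_0}{nc_0-1}|\mathring h|^2 |\hat A|^2\Big).
\]
At that point the claimed inequality reduces to verifying the two numerical conditions
\[
(1-\delta)\tfrac{2}{nc_0-1} \geq \tfrac{3}{2} \qquad \text{and} \qquad (1-\delta)\tfrac{nc_0}{nc_0-1} \geq 2,
\]
since the two terms on the right can absorb the corresponding terms on the left separately.

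The key observation is that the hypothesis $\tfrac{1}{n} < c_0 \leq \tfrac{4}{3n}$ forces $0 < nc_0 - 1 \leq \tfrac{1}{3}$, and therefore $\tfrac{2}{nc_0-1} \geq 6$ and $\tfrac{nc_0}{nc_0-1} = 1 + \tfrac{1}{nc_0-1} \geq 4$. The two coefficient conditions then become $(1-\delta)\cdot 6 \geq \tfrac{3}{2}$ and $(1-\delta)\cdot 4 \geq 2$: the first is comfortably slack, while the second is tight exactly at $\delta = \tfrac{1}{2}$. Both therefore hold for any $0 < \delta \leq \tfrac{1}{2}$, which is precisely the hypothesis of the lemma and explains why that particular threshold appears.

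There is no genuine obstacle at this stage — all the nontrivial content has been packaged into Lemma \ref{upperA} (which in turn rests on the Cauchy--Schwarz argument for $|\mathring h_{ij} \hat A_{ij}|^2 + |R^\perp_{ij}(\nu_1)|^2$ and on the matrix inequality of Li--Li \cite{LL92} for $|\langle \hat A_{ij}, \hat A_{kl}\rangle|^2 + |\hat R^\perp|^2$) and into Lemma \ref{lowerf} (which is the pinching computation of \cite{AB10} carried out while retaining the pinching term $f$). The final assembly is pure bookkeeping, and the main thing to be careful about is keeping track of which coefficient inequality is the binding one, since it is the $|\mathring h|^2 |\hat A|^2$ coefficient $\tfrac{nc_0}{nc_0-1}$ that pins down the threshold $\delta \leq \tfrac{1}{2}$.
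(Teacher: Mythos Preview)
Your proposal is correct and essentially identical to the paper's proof: both combine Lemma~\ref{upperA} with Lemma~\ref{lowerf}, use $c_0 \le \tfrac{4}{3n}$ to obtain $\tfrac{1}{nc_0-1}\ge 3$ and $\tfrac{nc_0}{nc_0-1}\ge 4$, and then verify the two coefficient inequalities, with the $|\mathring h|^2|\hat A|^2$ term being the one that forces $\delta\le\tfrac12$. Your added remark about which constraint is binding is accurate and matches the paper's computation.
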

\begin{proof}
In view of \eqref{reaction-term-estimate-hat-A} and \eqref{lower-bound-reaction-terms-of-f}, we have
\begin{align*}
|\langle \hat A_{ij}, \hat A_{kl} \rangle|^2 +  |\hat R^\perp|^2&  + |R^\perp_{ij}(\nu_1)|^2 - (1-\delta) \frac{|\hat A|^2}{f} (c_0|\langle A_{ij}, H \rangle|^2 - |\langle A_{ij}, A_{kl} \rangle|^2 - |R^\perp_{ij}|^2) \\
& \leq \frac{3}{2}|\hat A|^4 + 2|\mathring h|^2|\hat A|^2 -  \frac{2(1-\delta)}{nc_0 -1} |\hat A|^4- \frac{nc_0(1-\delta)}{nc_0 -1}|\mathring h|^2 |\hat A|^2 \\
& = \Big(\frac{3}{2} - \frac{2(1-\delta)}{nc_0 -1}\Big)|\hat A|^4 + \Big(2 - \frac{nc_0(1-\delta)}{nc_0 -1}\Big)|\mathring h|^2|\hat A|^2. 
\end{align*}
If $c_0 \leq \frac{4}{3n}$, then
\[
\frac{1}{nc_0 -1} \geq 3,\quad \text{and} \quad \frac{nc_0}{nc_0 -1} \geq 4. 
\]
Therefore if $\delta \leq \frac{1}{2}$ 
\begin{align*}
\frac{3}{2}  - \frac{2(1-\delta)}{nc_0 -1} &\leq \frac{3}{2} - 6(1-\delta) \leq 0,\\
2- \frac{nc_0(1-\delta)}{nc_0 -1} &\leq 2 -  4(1-\delta) \leq 0,
\end{align*}
which gives \eqref{reaction-term-estimate-final}.
\end{proof}

\subsection{Gradient Term Estimates}
Having analyzed the reaction terms, we turn our attention to the gradient terms. For this, we will use equation \eqref{norm-of-derivative-of-A}. Recalling that $\hat A_{jk}$ is traceless, it is straightforward to verify that
\begin{align}
\label{term-1-derivative-of-A}|\nabla_i h_{jk} + \langle \nabla_i^\perp \hat A_{jk}, \nu_1 \rangle|^2 & = |\nabla_i \mathring h_{jk} + \langle \nabla_i^\perp \hat A_{jk}, \nu_1 \rangle|^2 + \frac{1}{n} |\nabla |H||^2,\\
\label{term-perp-derivative-of-A}|\hat \nabla_i^\perp \hat A_{jk} +  h_{jk} \nabla_i^\perp \nu_1  |^2 &= |\hat \nabla_i^\perp \hat A_{jk} + \mathring h_{jk} \nabla_i^\perp \nu_1  |^2 + \frac{1}{n} |H|^2 |\nabla^\perp \nu_1|^2.
\end{align}
Observe that the first term in \eqref{term-1-derivative-of-A} is just
\begin{equation}
\label{traceless-derivative-term} |\langle \nabla_i^\perp \mathring A_{jk}, \nu_1 \rangle|^2 =  |\nabla_i \mathring h_{jk} + \langle \nabla_i^\perp \hat A_{jk}, \nu_1 \rangle|^2,
\end{equation}
which will be useful later on. 
Now as observed in \cite{Hui84} (cf. \cite{Ham82} for the Ricci flow), it follows from the Codazzi identity \eqref{codazzi} that the tensor
\[
E_{ijk} = \frac{1}{n+2}\Big(g_{ij} \nabla_k^\perp H + g_{jk} \nabla_i^\perp H + g_{ki} \nabla_j^\perp H\Big)
\]
is an irreducible component of $\nabla_i^\perp A_{jk}$ consisting of its various traces. In other words, $E_{ijk} \nabla_i^\perp A_{jk} = |E|^2$. This allows one to get an improved estimate over the trivial one. Namely, 
\begin{equation}
\label{classic-improved-derivative-estimate} |E|^2 = \frac{3}{n+2}|\nabla^\perp H|^2 \leq |\nabla^\perp A|^2.
\end{equation}
By consequence of this estimate and \eqref{for-evolution-of-f}, we can conclude nonnegativity of $(\partial_t - \Delta)f$: 

\begin{lemma}[Nonnegativity of $(\partial_t -\Delta)f$]\label{evolution-inequality-of-f}
If $\frac{1}{n} < c_0 \leq \frac{4}{3n}$, then 
\begin{align*}
\Big(\frac{\partial}{\partial t} - \Delta\Big)f &\geq 2 |h|^2 f +  \frac{2}{nc_0 -1}|\hat A|^2 f + \Big(1 - \frac{c_0(n+2)}{3}\Big) |\nabla^\perp A|^2.
\end{align*}
In particular, $(\partial_t - \Delta)f \geq 0$ if $f\geq 0$. 
\end{lemma}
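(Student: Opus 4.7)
The plan is to substitute the two key estimates already in hand directly into the evolution equation \eqref{evolution-equation-of-f}: the improved Kato-type inequality \eqref{classic-improved-derivative-estimate} for the gradient piece, and the reaction estimate \eqref{for-evolution-of-f} (which was derived mid-proof of Lemma \ref{lowerf}) for the reaction piece. Everything should then collapse to the stated form after a short algebraic rewrite using \eqref{trace-and-tracless} and \eqref{H-terms}.

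First I would handle the gradient terms. Equation \eqref{evolution-equation-of-f} contributes $2(|\nabla^\perp A|^2 - c_0|\nabla^\perp H|^2)$, and the estimate \eqref{classic-improved-derivative-estimate} gives $c_0|\nabla^\perp H|^2 \leq \tfrac{c_0(n+2)}{3}|\nabla^\perp A|^2$. Hence this portion is bounded below by a nonnegative multiple of $|\nabla^\perp A|^2$, with coefficient $1 - \tfrac{c_0(n+2)}{3}$. To see nonnegativity of this coefficient, note that $c_0 \leq \tfrac{4}{3n}$ forces $\tfrac{c_0(n+2)}{3} \leq \tfrac{4(n+2)}{9n} \leq 1$ once $n \geq 2$.

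Next I would handle the reaction terms. The inequality \eqref{for-evolution-of-f}, already established in the proof of Lemma \ref{lowerf}, reads
\begin{equation*}
c_0|\langle A_{ij}, H\rangle|^2 - |\langle A_{ij}, A_{kl}\rangle|^2 - |R^\perp_{ij}|^2 \;\geq\; \tfrac{2}{nc_0-1} f|\hat A|^2 + \tfrac{nc_0}{nc_0-1} f|\mathring h|^2 + \tfrac{1}{nc_0-1} f^2.
\end{equation*}
Doubling this gives a lower bound $\tfrac{4}{nc_0-1}f|\hat A|^2 + \tfrac{2nc_0}{nc_0-1}f|\mathring h|^2 + \tfrac{2}{nc_0-1}f^2$ for the reaction part of \eqref{evolution-equation-of-f}. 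The algebraic step is to recognize this combination as $2|h|^2 f + \tfrac{2}{nc_0-1}|\hat A|^2 f$. Indeed, \eqref{H-terms} says $\tfrac{1}{n}|H|^2 = \tfrac{1}{nc_0-1}(|\hat A|^2 + |\mathring h|^2 + f)$, so by \eqref{trace-and-tracless}
\begin{equation*}
|h|^2 = |\mathring h|^2 + \tfrac{1}{n}|H|^2 = \tfrac{1}{nc_0-1}|\hat A|^2 + \tfrac{nc_0}{nc_0-1}|\mathring h|^2 + \tfrac{1}{nc_0-1}f,
\end{equation*}
and multiplying by $2f$ and comparing yields exactly the desired identity.

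Combining both pieces proves the claimed inequality. For the ``in particular'' assertion, assuming $f \geq 0$: the first two terms on the right are nonnegative since $\tfrac{2}{nc_0-1} > 0$ (as $c_0 > \tfrac{1}{n}$) and the last term is nonnegative by the coefficient analysis above. I do not anticipate a genuine obstacle here: the content of the lemma is really the Kato-type improvement \eqref{classic-improved-derivative-estimate} plus the refined reaction estimate \eqref{for-evolution-of-f}, both of which were established earlier; the proof is essentially bookkeeping to identify the precise coefficients on $f|h|^2$ and $f|\hat A|^2$.
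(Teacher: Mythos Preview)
Your proposal is correct and follows essentially the same approach as the paper: both combine the reaction estimate \eqref{for-evolution-of-f} with the algebraic identity coming from \eqref{trace-and-tracless} and \eqref{H-terms} to rewrite the reaction piece as $2|h|^2 f + \tfrac{2}{nc_0-1}|\hat A|^2 f$, and both use \eqref{classic-improved-derivative-estimate} for the gradient piece. The only difference is the order of presentation, which is immaterial.
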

\begin{proof}
We first observe, using \eqref{H-terms}, that 
\begin{align*}
 \frac{2}{nc_0 -1} f |\hat A|^2 + \frac{nc_0}{nc_0 -1} f|\mathring h|^2 + \frac{1}{nc_0-1} f^2 &=\frac{1}{nc_0 -1}(|\mathring h|^2 + |\hat A|^2 + f)f + |\mathring h|^2 f+\frac{1}{nc_0 -1}|\hat A|^2f  \\
 & = \frac{1}{n} |H|^2 f + |\mathring h|^2 f+\frac{1}{nc_0 -1}|\hat A|^2f \\
 & = |h|^2 f+ \frac{1}{nc_0 -1}|\hat A|^2f. 
\end{align*}
Consequently by \eqref{evolution-equation-of-f} and \eqref{for-evolution-of-f}, we have
\begin{equation*}
\Big(\frac{\partial}{\partial t}- \Delta \Big)f \geq 2|h|^2 f + \frac{2}{nc_0 -1} |\hat A|^2 f + 2(|\nabla^\perp A|^2 - c_0 |\nabla^\perp H|^2) 
\end{equation*}
Hence, by \eqref{classic-improved-derivative-estimate} 
\begin{equation*}
\Big(\frac{\partial}{\partial t}- \Delta \Big)f \geq 2|h|^2f  +\frac{2}{nc_0 -1} |\hat A|^2 f + \Big(1 - \frac{c_0(n+2)}{3}\Big) |\nabla^\perp A|^2.
\end{equation*}
This is the same inequality obtained in \cite{AB10} and \cite{Ngu18}.
\end{proof}

There is an analogue of \eqref{classic-improved-derivative-estimate} in both the principal direction and its orthogonal complement. We observed in Section 2 that the projection of the Codazzi identity onto $\nu_1$ and its orthogonal complement implies the tensors $\nabla_i h_{jk} + \langle \nabla^\perp_i \hat A_{jk}, \nu_1 \rangle$ and $\hat \nabla_i^\perp \hat A_{jk} + h_{jk} \nabla^\perp_i \nu_1$ are symmetric in $i, j, k$. Recalling \eqref{codazzi-1} and \eqref{codazzi-perp}, it follows that an irreducible component of each tensor is given by
\begin{align*}
E^{(1)}_{ijk} &:= \frac{1}{n+2}\Big(g_{ij} \nabla_k |H| + g_{jk} \nabla_i |H| +g_{ki} \nabla_j |H|\Big),\\
E^{(\perp)}_{ijk} &:= \frac{1}{n+2}\Big(g_{ij}|H| \nabla_k^\perp \nu_1 + g_{jk}|H| \nabla_i^\perp \nu_1 +g_{ki}|H| \nabla_j^\perp \nu_1\Big).
\end{align*}
You can readily confirm that $E^{(1)}_{ijk}(\nabla_i h_{jk} + \langle \nabla^\perp_i \hat A_{jk}, \nu_1 \rangle) = |E^{(1)}|^2$ and $\langle E^{(\perp)}_{ijk}, \hat \nabla_i^\perp \hat A_{jk} + h_{jk} \nabla^\perp_i \nu_1\rangle= |E^{(\perp)}|^2$. As in \eqref{classic-improved-derivative-estimate}, we obtain that
\begin{align}
\frac{3}{n+2} |\nabla |H||^2 &\leq |\nabla_i h_{jk} + \langle \nabla_i^\perp \hat A_{jk},\nu_1 \rangle |^2, \\
\label{improved-perp} \frac{3}{n+2}|H|^2 |\nabla^\perp \nu_1|^2 &\leq |\hat \nabla_i^\perp \hat A_{jk} +  h_{jk}\nabla_i^\perp \nu_1|^2.
\end{align}
Now expanding the right-hand side of both inequalities above using \eqref{term-1-derivative-of-A} and \eqref{term-perp-derivative-of-A}; recalling \eqref{traceless-derivative-term}; and noting that $\frac{3}{n+2} - \frac{1}{n} = \frac{2(n-1)}{n(n+2)}$, we arrive at the estimates
\begin{align}
\label{final-improved-1} \frac{2(n-1)}{n(n+2)}|\nabla |H||^2 & \leq  |\langle \nabla_i^\perp \mathring A_{jk}, \nu_1\rangle|^2,\\
\label{final-improved-perp}  \frac{2(n-1)}{n(n+2)} |H|^2 |\nabla^\perp \nu_1|^2 &\leq |\hat \nabla_i^\perp \hat A_{jk} + \mathring h_{jk}\nabla^\perp_i \nu_1|^2.  
\end{align}
The second of these two estimates implies the following useful lower bound. 
\begin{lemma}[Lower bound for Bochner term of $(\partial_t - \Delta)|\hat A|^2$]\label{bochA}\leavevmode 
\begin{enumerate}
\item If $\frac{1}{n} < c_0 \leq \frac{4}{3n}$, then
\begin{equation}
\label{lower-bochner-A-1} 2|\hat \nabla^\perp \hat A|^2 \geq  \frac{4n-10}{n+2}|\mathring h|^2|\nabla^\perp \nu_1|^2  +  \frac{6(n-1)}{n+2}|\hat A|^2|\nabla^\perp \nu_1|^2  +  \frac{6(n-1)}{n+2}f |\nabla^\perp \nu_1|^2.
\end{equation}
\item If $\frac{1}{n} < c_0 \leq \frac{3(n+1)}{2n(n+2)}$, then
\begin{equation}
\label{lower-bochner-A-2} 2|\hat \nabla^\perp \hat A|^2 \geq  2|\mathring h|^2|\nabla^\perp \nu_1|^2  + 4|\hat A|^2|\nabla^\perp \nu_1|^2  + 4 f |\nabla^\perp \nu_1|^2.
\end{equation}
\end{enumerate}
\end{lemma}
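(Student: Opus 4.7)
My plan is to combine two earlier ingredients: the Kato-type refinement \eqref{final-improved-perp}, which comes from the Codazzi identity, and the algebraic identity \eqref{H-terms} that converts $|H|^2$ into the pinching variables $|\hat A|^2$, $|\mathring h|^2$, and $f$.

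The first step is to expand the square on the right-hand side of \eqref{final-improved-perp} and isolate $|\hat\nabla^\perp \hat A|^2$. The only subtlety is the cross term $2\sum_{i,j,k} \mathring h_{jk}\langle \hat\nabla_i^\perp \hat A_{jk}, \nabla_i^\perp \nu_1\rangle$, which I will bound by Cauchy-Schwarz together with $2ab \leq a^2 + b^2$, yielding
\[
|\hat \nabla_i^\perp \hat A_{jk} + \mathring h_{jk}\nabla^\perp_i \nu_1|^2 \leq 2|\hat\nabla^\perp \hat A|^2 + 2|\mathring h|^2 |\nabla^\perp \nu_1|^2.
\]
Inserting this into \eqref{final-improved-perp} and then applying \eqref{H-terms} to replace $\tfrac{1}{n}|H|^2$ by $\tfrac{1}{nc_0-1}\bigl(|\hat A|^2 + |\mathring h|^2 + f\bigr)$ produces the universal intermediate bound
\[
2|\hat\nabla^\perp \hat A|^2 \geq \frac{2(n-1)}{(n+2)(nc_0 - 1)}\bigl(|\hat A|^2 + |\mathring h|^2 + f\bigr)|\nabla^\perp \nu_1|^2 - 2|\mathring h|^2 |\nabla^\perp \nu_1|^2.
\]

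Both parts of the lemma now follow by plugging the hypothesis on $c_0$ into the leading coefficient $\tfrac{2(n-1)}{(n+2)(nc_0-1)}$. For part (1), the bound $c_0 \leq 4/(3n)$ gives $nc_0-1 \leq 1/3$, so this coefficient is at least $\tfrac{6(n-1)}{n+2}$; the $|\mathring h|^2|\nabla^\perp \nu_1|^2$ coefficient is then $\tfrac{6(n-1)}{n+2} - 2 = \tfrac{4n-10}{n+2}$, as claimed. For part (2), the stronger bound $c_0 \leq \tfrac{3(n+1)}{2n(n+2)}$ gives $nc_0-1 \leq \tfrac{n-1}{2(n+2)}$ and hence a leading coefficient at least $4$, producing $4 - 2 = 2$ on the $|\mathring h|^2$ term.

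There is no serious obstacle; the only point that deserves attention is that the constant in the Cauchy--Schwarz step is forced to be sharp. Rerunning the calculation with a general Young parameter $\varepsilon$ and matching both the $|\hat A|^2$ and $|\mathring h|^2$ coefficients in part (2) simultaneously pins down $\varepsilon = 1$ and $c_0 = \tfrac{3(n+1)}{2n(n+2)}$; this explains the appearance of this specific constant in the main theorem and the need for the small $\varepsilon_0$-margin built into the definition of $c_0$ just before \eqref{definition-of-f}.
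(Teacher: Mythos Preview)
Your proof is correct and follows exactly the paper's argument: the same Young-type bound $|\hat\nabla_i^\perp \hat A_{jk} + \mathring h_{jk}\nabla_i^\perp \nu_1|^2 \leq 2|\hat\nabla^\perp\hat A|^2 + 2|\mathring h|^2|\nabla^\perp\nu_1|^2$, combined with \eqref{final-improved-perp} and \eqref{H-terms}, and then the two numerical substitutions $nc_0-1\leq \tfrac{1}{3}$ and $nc_0-1\leq \tfrac{n-1}{2(n+2)}$. Your closing remark about the sharpness of the Young parameter and its relation to the constant $\tfrac{3(n+1)}{2n(n+2)}$ is a nice additional observation not spelled out in the paper.
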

\begin{proof}
We begin by applying Young's inequality
\begin{align*}
|\hat \nabla_i^\perp \hat A_{jk} +  \mathring h_{jk}\nabla_i^\perp \nu_1|^2 &= |\hat \nabla^\perp \hat A|^2 + 2\langle \hat \nabla_i^\perp \hat A_{jk},  \mathring h_{jk} \nabla_i^\perp \nu_1 \rangle + |\mathring h|^2 |\nabla^\perp \nu_1|^2 \\
& \leq 2|\hat \nabla^\perp \hat A|^2 + 2|\mathring h|^2 |\nabla^\perp \nu_1|^2. 
\end{align*}
Multiplying both sides of \eqref{H-terms} by $\frac{2(n-1)}{(n+2)(nc_0 -1)}$ gives
\[
 \frac{2(n-1)}{n(n+2)} |H|^2 = \frac{2(n-1)}{(n+2)(nc_0 -1)}(f + |\hat A|^2 + |\mathring h|^2).
\]
In view of \eqref{final-improved-perp}, our observations give us that
\[
 \frac{2(n-1)}{(n+2)(nc_0 -1)}(f + |\hat A|^2 + |\mathring h|^2) |\nabla^\perp \nu_1|^2\leq 2|\hat \nabla^\perp \hat A|^2 + 2|\mathring h|^2 |\nabla^\perp \nu_1|^2.
\]
Subtracting the $|\mathring h|^2|\nabla^\perp \nu_1|^2$ term on the right-hand side gives
\[
\frac{2(n-1)}{(n+2)(nc_0 -1)}( f+ |\hat A|^2) |\nabla^\perp \nu_1|^2 +\Big(\frac{2(n-1)}{(n+2)(nc_0 -1)} - 2\Big) | \mathring h|^2|\nabla^\perp \nu_1|^2 \leq 2|\hat \nabla^\perp \hat A|^2.
\]
If $c_0 \leq \frac{4}{3n}$, then $nc_0 -1 \leq \frac{1}{3}$ and 
\[
\frac{2(n-1)}{(n+2)(nc_0 -1)} \geq \frac{6(n-1)}{n+2}. 
\]
Plugging this in above gives the first estimate of the lemma. If $c_0 \leq \frac{3(n+1)}{2n(n+2)}$, then $nc_0 -1 \leq \frac{n-1}{2(n+2)}$ and 
\[
\frac{2(n-1)}{(n+2)(nc_0 -1)} \geq 4.
\]
This establishes the second estimate in the lemma. 
\end{proof}

Next we obtain improved lower bounds for the Bochner term in the evolution equation of $f$. 
\begin{lemma}[Lower bound for Bochner term of $(\partial_t - \Delta)f$]\label{bochf} \leavevmode
\begin{enumerate}
\item If $\frac{1}{n} < c_0 \leq \frac{4}{3n}$, then
\begin{equation}
\label{lower-bochner-f-1} 2\frac{|\hat A|^2}{f} \big(|\nabla^\perp A|^2 - c_0|\nabla^\perp H|^2) \geq \frac{5n-8}{3(n-1)}\frac{|\hat A|^2}{f}|\langle \nabla^\perp \mathring A, \nu_1 \rangle|^2 + \frac{10n - 16}{n+2} |\hat A|^2 |\nabla^\perp \nu_1|^2. 
\end{equation}
\item If $\frac{1}{n} < c_0 \leq \frac{3(n+1)}{2n(n+2)}$, then 
\begin{equation}
\label{lower-bochner-f-2} 2\frac{|\hat A|^2}{f} \big(|\nabla^\perp A|^2 - c_0|\nabla^\perp H|^2) \geq \frac{3}{2}\frac{|\hat A|^2}{f}|\langle \nabla^\perp \mathring A, \nu_1 \rangle|^2 + 6 |\hat A|^2 |\nabla^\perp \nu_1|^2. 
\end{equation}
\end{enumerate}
\end{lemma}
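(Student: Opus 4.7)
The plan is to start from the decomposition of $|\nabla^\perp A|^2$ afforded by Proposition \ref{decompgrad} together with the two split identities \eqref{term-1-derivative-of-A} and \eqref{term-perp-derivative-of-A}. Subtracting $c_0 |\nabla^\perp H|^2 = c_0 |\nabla |H||^2 + c_0 |H|^2 |\nabla^\perp \nu_1|^2$ (from \eqref{nabla-perp-H-squared-formula}), and using \eqref{traceless-derivative-term}, I obtain the clean identity
\[
|\nabla^\perp A|^2 - c_0 |\nabla^\perp H|^2 = |\langle \nabla^\perp \mathring A, \nu_1\rangle|^2 + |\hat\nabla^\perp \hat A + \mathring h \nabla^\perp \nu_1|^2 - \Big(c_0 - \tfrac{1}{n}\Big)\bigl(|\nabla |H||^2 + |H|^2 |\nabla^\perp \nu_1|^2\bigr).
\]
Since $c_0 > \frac{1}{n}$, the last two terms are negative; the strategy is to absorb them into the first two using the improved Codazzi estimates \eqref{final-improved-1} and \eqref{final-improved-perp}.

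For the $|\nabla |H||^2$ term, I would apply \eqref{final-improved-1} to write $(c_0 - \frac{1}{n})|\nabla|H||^2 \leq (c_0 - \frac{1}{n})\frac{n(n+2)}{2(n-1)} |\langle \nabla^\perp \mathring A, \nu_1\rangle|^2$. In case (i), $c_0 - \frac{1}{n} \leq \frac{1}{3n}$ makes this $\leq \frac{n+2}{6(n-1)} |\langle \nabla^\perp \mathring A, \nu_1\rangle|^2$, so the difference is at least $\frac{5n-8}{6(n-1)} |\langle \nabla^\perp \mathring A, \nu_1\rangle|^2$; in case (ii), $c_0 - \frac{1}{n} \leq \frac{n-1}{2n(n+2)}$ gives a factor $\frac{1}{4}$ instead, leaving $\frac{3}{4}|\langle \nabla^\perp \mathring A, \nu_1\rangle|^2$. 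Doubling produces the $\frac{5n-8}{3(n-1)}$ and $\frac{3}{2}$ coefficients appearing in the statements.

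For the $|H|^2 |\nabla^\perp \nu_1|^2$ term, I would combine the $\frac{1}{n}|H|^2 |\nabla^\perp \nu_1|^2$ piece coming from \eqref{term-perp-derivative-of-A} with the lower bound on $|\hat\nabla^\perp \hat A + \mathring h \nabla^\perp \nu_1|^2$ via \eqref{final-improved-perp}. This reassembles the full Huisken-type constant
\[
\frac{2(n-1)}{n(n+2)} + \frac{1}{n} = \frac{3}{n+2},
\]
so that the second pair of terms is bounded below by $\left(\frac{3}{n+2} - c_0\right)|H|^2 |\nabla^\perp \nu_1|^2$. Next I convert $|H|^2$ using \eqref{H-terms}: $|H|^2 = \frac{n}{nc_0 - 1}(|\hat A|^2 + |\mathring h|^2 + f)$, and drop the nonnegative $|\hat A|^2$ and $|\mathring h|^2$ contributions, retaining only the $f$ piece. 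In case (i), $c_0 \leq \frac{4}{3n}$ gives $\frac{3}{n+2} - c_0 \geq \frac{5n-8}{3n(n+2)}$ and $\frac{1}{nc_0-1} \geq 3$, and multiplying the two ratios produces a total factor of $\frac{5n-8}{n+2}$, doubling to $\frac{10n-16}{n+2}$. In case (ii), $c_0 \leq \frac{3(n+1)}{2n(n+2)}$ gives $\frac{3}{n+2} - c_0 \geq \frac{3(n-1)}{2n(n+2)}$ and $\frac{1}{nc_0-1} \geq \frac{2(n+2)}{n-1}$, which multiply to $3$, doubling to $6$. Finally, multiplying the entire inequality by $\frac{|\hat A|^2}{f}$ yields both claimed estimates.

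The only real obstacle is the arithmetic bookkeeping of the dimensional constants, in particular verifying that the pinching bounds $c_0 \leq \frac{4}{3n}$ and $c_0 \leq \frac{3(n+1)}{2n(n+2)}$ are precisely tuned so that the ratios $(c_0-\frac{1}{n})\frac{n(n+2)}{2(n-1)}$ and $(\frac{3}{n+2}-c_0) \cdot \frac{n}{nc_0-1}$ both land at the advertised clean values. I do not expect any new ideas beyond the improved Codazzi estimate and the identity \eqref{H-terms}; the proof is essentially linear once the decomposition is written down.
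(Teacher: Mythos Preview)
Your proposal is correct and follows essentially the same route as the paper: decompose $|\nabla^\perp A|^2 - c_0|\nabla^\perp H|^2$ into its $E_1$ and $\hat E$ components, apply the improved Codazzi estimates \eqref{final-improved-1} and \eqref{improved-perp}/\eqref{final-improved-perp} to absorb the negative $(c_0-\tfrac{1}{n})$-terms, convert the remaining $|H|^2$ via \eqref{H-terms} keeping only the $f$ piece, and then evaluate the constants in the two pinching regimes. The arithmetic you sketch matches the paper's exactly.
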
 
\begin{proof}
By \eqref{norm-of-derivative-of-A} and  \eqref{nabla-perp-H-squared-formula}, we have 
\begin{align*}
|\nabla^\perp A|^2 - c_0|\nabla^\perp H|^2 &= |\langle \nabla_i^\perp \hat A_{jk}, \nu_1\rangle + \nabla_i h_{jk} |^2 - c_0 |\nabla |H||^2 \\
& \qquad + |\hat \nabla_i^\perp \hat A_{jk} + h_{jk} \nabla_i^\perp \nu_1|^2 - c_0|H|^2|\nabla^\perp \nu_1|^2.
\end{align*}
In view of \eqref{term-1-derivative-of-A}, \eqref{traceless-derivative-term} and \eqref{final-improved-1}, we have 
\begin{align*} 
|\langle \nabla_i^\perp \hat A_{jk}, \nu_1\rangle + \nabla_i h_{jk} |^2 - c_0 |\nabla |H||^2 & = |\langle \nabla_i^\perp \mathring A_{jk}, \nu_1\rangle |^2 - \frac{nc_0 -1}{n} |\nabla |H||^2\\
& \geq  \Big(1 - \frac{(n+2)(nc_0 -1)}{2(n-1)}\Big)|\langle \nabla_i^\perp \mathring A_{jk}, \nu_1\rangle|^2.
\end{align*}
In view of \eqref{improved-perp} and \eqref{H-terms}, we have
\begin{align*}
|\hat \nabla_i^\perp \hat A_{jk} + h_{jk} \nabla_i^\perp \nu_1|^2 - c_0|H|^2|\nabla^\perp \nu_1|^2 & \geq \Big(\frac{3}{n+2} - c_0\Big)|H|^2 |\nabla^\perp \nu_1|^2 \\
&= \frac{n}{nc_0 -1}\Big(\frac{3}{n+2} - c_0 \Big) (f + |\hat A|^2 + |\mathring h|^2)|\nabla^\perp \nu_1|^2 \\
& \geq \frac{n}{nc_0 -1}\Big(\frac{3}{n+2} - c_0 \Big) f  |\nabla^\perp \nu_1|^2.
\end{align*}
Thus, by the three previous computations
\begin{align*}
2\frac{|\hat A|^2}{f} (|\nabla^\perp A|^2 - c_0|\nabla^\perp H|^2) &\geq \Big(2 - \frac{(n+2)(nc_0 -1)}{(n-1)}\Big) \frac{|\hat A|^2}{f} |\langle \nabla_i^\perp \mathring A_{jk}, \nu_1\rangle|^2\\
& \qquad +  \frac{2n}{nc_0 -1}\Big(\frac{3}{n+2} - c_0 \Big) |\hat A|^2 |\nabla^\perp \nu_1|^2.
\end{align*}
If $c_0 \leq \frac{4}{3n}$, then $nc_0 -1 \leq \frac{1}{3}$ and  
\begin{align*}
2 - \frac{(n+2)(nc_0 -1)}{(n-1)}  &\geq 2 - \frac{n+2}{3(n-1)} = \frac{5n-8}{3(n-1)},\\
\frac{2n}{nc_0 -1}\Big(\frac{3}{n+2} - c_0\Big) & \geq 6n\Big(\frac{9n - 4(n+2)}{(n+2)3n}\Big) = \frac{10n - 16}{n+2}.
\end{align*}
This establishes the first inequality of the lemma. If $c_0 \leq \frac{3(n+1)}{2n(n+2)}$, then $nc_0 -1 \leq \frac{n-1}{2(n+2)}$ and 
\begin{align*}
2 - \frac{(n+2)(nc_0 -1)}{(n-1)}  &\geq 2 - \frac{1}{2} = \frac{3}{2}, \\
\frac{2n}{nc_0 -1}\Big(\frac{3}{n+2} - c_0\Big) & \geq \frac{4n(n+2)}{n-1}\Big(\frac{6n - 3(n+1)}{2n(n+2)} \Big) = 6.
\end{align*}
This establishes the second inequality of the lemma. 
\end{proof}

Finally, we must estimate the remaining gradient term that appears in the evolution equation of $|\hat A|^2$. The term is of the form $4Q_{ijk}\langle \hat A_{ij}, \nabla_k^\perp \nu_1\rangle$ where $Q$ is defined in \eqref{definition-of-Q}. By Cauchy-Schwarz, we obtain the following useful estimate:
\begin{align}
\nonumber |\langle \hat A, \nabla^\perp \nu_1 \rangle|^2 &= \sum_{i, j,k =1}^n \langle \hat A_{ij}, \nabla_k^\perp \nu_1 \rangle^2\\
\nonumber & \leq \sum_{i,j,k= 1}^n \Big(\sum_{\beta =2}^{N-2} \hat A_{ij\beta}^2\Big) \Big(\sum_{\beta = 2}^{N-2} \langle \nabla_k^\perp \nu_1, \nu_\beta \rangle^2\Big) \\
\label{cauchy-schwarz-comp}& \leq |\hat A|^2 |\nabla^\perp \nu_1|^2. 
\end{align}

\noindent \begin{lemma}[Upper bound for gradient term of $(\partial_t - \Delta)|\hat A|^2$]\label{gradA}\leavevmode
\begin{enumerate}
\item If $\frac{1}{n} < c_0 \leq \frac{4}{3n}$, then 
\begin{align}
\label{estimate-for-gradient-term-1} 4Q_{ijk}\langle \hat A_{ij}, \nabla^\perp_k\nu_1\rangle &\leq  2|\langle \nabla^\perp \hat A, \nu_1 \rangle|^2 +   \frac{5n-9}{3(n-1)}\frac{|\hat A|^2}{f}|\langle \nabla^\perp \mathring A, \nu_1\rangle|^2\\
\nonumber & \qquad +  2|\hat A|^2|\nabla^\perp \nu_1|^2 + \frac{3(n-1)}{n-3}f|\nabla^\perp \nu_1|^2 + \frac{2(n+2)}{n+3}  |\mathring h|^2|\nabla^\perp \nu_1|^2. 
\end{align}
\item If $\frac{1}{n} < c_0 \leq \frac{3(n+1)}{2n(n+2)} - \varepsilon_0$ and $\varepsilon = \frac{2n(n+2)}{3(n-1)}\varepsilon_0$, then
\begin{align}
\label{estimate-for-gradient-term-2} 4Q_{ijk} \langle \hat A_{ij}, \nabla^\perp_k\nu_1\rangle &\leq 2|\langle \nabla^\perp \hat A, \nu_1 \rangle|^2 + (1-\varepsilon)\frac{3}{2}\frac{|\hat A|^2}{f}|\langle \nabla^\perp \mathring A, \nu_1\rangle|^2\\
\nonumber & \qquad +  2|\hat A|^2|\nabla^\perp \nu_1|^2 +  4f|\nabla^\perp \nu_1|^2 + 2 |\mathring h|^2|\nabla^\perp \nu_1|^2. 
\end{align}
\end{enumerate}
\end{lemma}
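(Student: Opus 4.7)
The plan is to split the expression into the three contributions dictated by \eqref{definition-of-Q},
\begin{align*}
T_1 &= 4\langle\nabla_k^\perp \mathring A_{ij},\nu_1\rangle\langle\hat A_{ij},\nabla_k^\perp\nu_1\rangle,\\
T_2 &= -4\langle\nabla_k^\perp\hat A_{ij},\nu_1\rangle\langle\hat A_{ij},\nabla_k^\perp\nu_1\rangle,\\
T_3 &= -4|H|^{-1}\mathring h_{ij}\nabla_k|H|\,\langle\hat A_{ij},\nabla_k^\perp\nu_1\rangle,
\end{align*}
and to estimate each via Young's inequality $|4ab|\leq 2\epsilon a^2+2\epsilon^{-1}b^2$ paired with the Cauchy--Schwarz bound \eqref{cauchy-schwarz-comp}. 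I'll handle $T_2$ first by taking $\epsilon=1$ and then applying \eqref{cauchy-schwarz-comp} to the resulting $|\langle\hat A,\nabla^\perp\nu_1\rangle|^2$ term. This immediately yields $|T_2|\leq 2|\langle\nabla^\perp\hat A,\nu_1\rangle|^2 + 2|\hat A|^2|\nabla^\perp\nu_1|^2$, producing the first two terms on the right-hand side in both parts.

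For $T_1$, I'll apply Young with weight $\epsilon$ and scale-balancing factors $|\hat A|^2/f$ and $f/|\hat A|^2$ to get
\[
|T_1| \leq 2\epsilon\,\frac{|\hat A|^2}{f}|\langle\nabla^\perp\mathring A,\nu_1\rangle|^2 + 2\epsilon^{-1}\frac{f}{|\hat A|^2}|\langle\hat A,\nabla^\perp\nu_1\rangle|^2,
\]
and then use \eqref{cauchy-schwarz-comp} to reduce the second summand to $2\epsilon^{-1}f|\nabla^\perp\nu_1|^2$. For $T_3$, a double application of Cauchy--Schwarz (in the joint index $(i,j,k)$, then via \eqref{cauchy-schwarz-comp}) produces $|T_3| \leq 4|H|^{-1}|\mathring h||\nabla|H||\,|\hat A||\nabla^\perp\nu_1|$. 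A Young's split with weight $\alpha$ then gives
\[
|T_3| \leq 2\alpha|H|^{-2}|\nabla|H||^2|\hat A|^2 + 2\alpha^{-1}|\mathring h|^2|\nabla^\perp\nu_1|^2,
\]
and the first summand is converted into the desired $\frac{|\hat A|^2}{f}|\langle\nabla^\perp\mathring A,\nu_1\rangle|^2$ form by combining the improved gradient estimate \eqref{final-improved-1} with the lower bound $|H|^2 \geq \frac{n}{nc_0-1}f$ furnished by \eqref{H-terms}. The resulting coefficient on $\frac{|\hat A|^2}{f}|\langle\nabla^\perp\mathring A,\nu_1\rangle|^2$ from $T_3$ is $\alpha\frac{(nc_0-1)(n+2)}{n-1}$.

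It then remains to select $(\epsilon,\alpha)$ in each regime. For part (1), $c_0\leq\frac{4}{3n}$ gives $nc_0-1\leq\frac{1}{3}$, and the choices $\epsilon=\frac{2(n-3)}{3(n-1)}$ (positive thanks to $n\geq 5$) and $\alpha=\frac{n+3}{n+2}$ produce $\frac{3(n-1)}{n-3}$ on $f|\nabla^\perp\nu_1|^2$, $\frac{2(n+2)}{n+3}$ on $|\mathring h|^2|\nabla^\perp\nu_1|^2$, and $\frac{4(n-3)}{3(n-1)}+\frac{n+3}{3(n-1)}=\frac{5n-9}{3(n-1)}$ on $\frac{|\hat A|^2}{f}|\langle\nabla^\perp\mathring A,\nu_1\rangle|^2$. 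For part (2), the tighter bound $nc_0-1\leq\frac{n-1}{2(n+2)}-n\varepsilon_0$ combined with $\epsilon=1/2$ and $\alpha=1$ yields $4$, $2$, and $1+\frac{1}{2}-\frac{n(n+2)\varepsilon_0}{n-1}$, the last of which equals $(1-\varepsilon)\frac{3}{2}$ by the definition $\varepsilon=\frac{2n(n+2)\varepsilon_0}{3(n-1)}$.

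The hard part is not any single estimate but the simultaneous tuning: the weights $(\epsilon,\alpha)$ must be chosen so that all three output coefficients fit within the budgets left by the Bochner contributions of Lemmas \ref{bochA} and \ref{bochf} when the whole expression is assembled in \eqref{equation-of-ratio}. The slack $\varepsilon_0$ built into $c_0$ when $n=5,6,7$ is precisely what makes part (2) work, since the Young weight $\alpha=1$ is saturated there and no further margin is available on the $|\mathring h|^2|\nabla^\perp\nu_1|^2$ term.
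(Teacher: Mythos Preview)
Your proof is correct and follows essentially the same approach as the paper: splitting the contribution of $Q$ into its three constituent pieces, applying Cauchy--Schwarz together with \eqref{cauchy-schwarz-comp}, and then tuning the Young weights (your $(\epsilon,\alpha)$ coincide with the paper's $(a_2,a_3)$, with $a_1=1$). The only cosmetic difference is that the paper first bounds the whole expression by $4|Q|\,|\hat A|\,|\nabla^\perp\nu_1|$ via Cauchy--Schwarz and the triangle inequality on $|Q|$ before applying Young to each of the three resulting products, whereas you split first and bound each $T_i$ separately; the arithmetic and final coefficients are identical.
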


\begin{proof}
Using the triangle inequality on \eqref{definition-of-Q}, we get
\begin{equation}
\label{triangle-ineq-comp} |Q| \leq |\langle \nabla^\perp \mathring A, \nu_1 \rangle| + |\langle \nabla^\perp \hat A, \nu_1 \rangle| + |H|^{-1} |\mathring h| |\nabla |H||.
\end{equation}
We will first treat the case $\frac{1}{n} < c_0 \leq \frac{4}{3n}$. It easily follows from \eqref{definition-of-f}, the definition of $f$, that
\[
f \leq \Big(c_0 - \frac{1}{n} \Big)|H|^2 \leq \frac{1}{3n}|H|^2. 
\]
Consequently, using the estimate \eqref{final-improved-1}, we obtain
\begin{align}
\label{compu-1}\frac{|\hat A|^2}{|H|^2} |\nabla |H||^2 &\leq \frac{n(n+2)}{2(n-1)} \frac{1}{3n}\frac{|\hat A|^2}{f}|\langle \nabla^\perp \mathring A, \nu_1\rangle|^2  \\
\nonumber & =  \frac{n+2}{6(n-1)}\frac{|\hat A|^2}{f}|\langle \nabla^\perp \mathring A, \nu_1\rangle|^2. 
\end{align}
Then \eqref{cauchy-schwarz-comp} and \eqref{triangle-ineq-comp} give
\begin{align*}
4Q_{ijk} \langle \hat A_{ij}, \nabla^\perp_k\nu_1\rangle &\leq 4|Q| |\langle \hat A, \nabla^\perp \nu_1\rangle| \\
&\leq 4\Big(|\langle \nabla^\perp \mathring A, \nu_1 \rangle| + |\langle \nabla^\perp \hat A, \nu_1 \rangle| + |H|^{-1} |\mathring h| |\nabla |H||\Big)|\hat A||\nabla^\perp \nu_1|. 
\end{align*}
Now to each of these three summed terms above we apply Young's inequality with constants $a_1, a_2, a_3> 0$. Specifically, we have 
\begin{align*}
4|\langle \nabla^\perp \hat A, \nu_1 \rangle||\hat A||\nabla^\perp \nu_1| & \leq 2a_1|\langle \nabla^\perp \hat A, \nu_1 \rangle|^2 + \frac{2}{a_1} |\hat A|^2|\nabla^\perp \nu_1|^2,\\
4|\langle \nabla^\perp \mathring A, \nu_1 \rangle| |\hat A||\nabla^\perp \nu_1|& = 4|\langle \nabla^\perp \mathring A, \nu_1 \rangle| \frac{|\hat A|}{f^{\frac{1}{2}}} f^{\frac{1}{2}}|\nabla^\perp \nu_1|\\
& \leq 2a_2\frac{|\hat A|^2}{f} |\langle \nabla^\perp \mathring A, \nu_1 \rangle|^2 + \frac{2}{a_2}f |\nabla^\perp \nu_1|^2,\\
4|H|^{-1} |\mathring h| |\nabla |H|||\hat A||\nabla^\perp \nu_1| &\leq 2 a_3 \frac{|\hat A|^2}{|H|^2} |\nabla |H||^2 + \frac{2}{a_3} |\mathring h|^2|\nabla^\perp \nu_1|^2\\
& \leq 2a_3 \frac{n+2}{6(n-1)}\frac{|\hat A|^2}{f}|\langle \nabla^\perp \mathring A, \nu_1\rangle|^2+ \frac{2}{a_3} |\mathring h|^2|\nabla^\perp \nu_1|^2. 
\end{align*}
Note we used \eqref{compu-1} in the last inequality. Hence
\begin{align}
\label{inequality-for-Q-comp} 4Q_{ijk} \langle \hat A_{ij}, \nabla^\perp_k\nu_1\rangle &\leq 2a_1|\langle \nabla^\perp \hat A, \nu_1 \rangle|^2 +  \big(2a_2 + 2a_3\frac{n+2}{6(n-1)}\big)\frac{|\hat A|^2}{f}|\langle \nabla^\perp \mathring A, \nu_1\rangle|^2\\
\nonumber & \qquad +  \frac{2}{a_1} |\hat A|^2|\nabla^\perp \nu_1|^2 + \frac{2}{a_2} f|\nabla^\perp \nu_1|^2 + \frac{2}{a_3} |\mathring h|^2|\nabla^\perp \nu_1|^2. 
\end{align}
Now set 
\begin{equation*}
a_1 = 1, \qquad a_2 = \frac{2(n-3)}{3(n-1)}, \qquad a_3 = \frac{n+3}{n+2}.
\end{equation*}
In this case, 
\begin{align*}
2a_2 + 2a_3 \frac{n+2}{6(n-1)} &= \frac{4(n - 3)}{3(n-1)} + \frac{n+3}{n+2}\frac{n+2}{3(n-1)}  =  \frac{5n - 9}{3(n-1)},\\
\frac{2}{a_2} &= \frac{3(n-1)}{n-3},\\
\frac{2}{a_3} &= \frac{2(n+2)}{n+3}.
\end{align*}
Plugging these into \eqref{inequality-for-Q-comp}, we conclude 
\begin{align*}
4Q_{ijk} \langle \hat A_{ij}, \nabla^\perp_k\nu_1\rangle &\leq 2|\langle \nabla^\perp \hat A, \nu_1 \rangle|^2 +   \frac{5n-9}{3(n-1)}\frac{|\hat A|^2}{f}|\langle \nabla^\perp \mathring A, \nu_1\rangle|^2\\
& \qquad +  2|\hat A|^2|\nabla^\perp \nu_1|^2 + \frac{3(n-1)}{n-3}f|\nabla^\perp \nu_1|^2 + \frac{2(n+2)}{n+3}  |\mathring h|^2|\nabla^\perp \nu_1|^2, 
\end{align*}
as claimed. 

Now if $\frac{1}{n} < c_0 \leq \frac{3(n+1)}{2n(n+2)} -\varepsilon_0$, then $c_0 - \frac{1}{n} \leq \frac{n-1}{2n(n+2)} -\varepsilon_0$. Therefore if we take $\varepsilon = \frac{2n(n+2)}{3(n-1)} \varepsilon_0$, then
\[
c_0 -\frac{1}{n} \leq (1-3\varepsilon) \frac{n-1}{2n(n+2)}.
\]
In this case, 
\[
f \leq \Big(c_0 - \frac{1}{n} \Big)|H|^2 \leq (1-3\varepsilon)\frac{n-1}{2n(n+2)}|H|^2.
\]
Again using \eqref{final-improved-1}, it follows that 
\begin{align*}
\frac{|\hat A|^2}{|H|^2} |\nabla |H||^2 &\leq (1-3\varepsilon) \frac{n(n+2)}{2(n-1)} \frac{n-1}{2n(n+2)}\frac{|\hat A|^2}{f}|\langle \nabla^\perp \mathring A, \nu_1\rangle|^2  \\
& =  \frac{1}{4}(1-3\varepsilon)\frac{|\hat A|^2}{f}|\langle \nabla^\perp \mathring A, \nu_1\rangle|^2. 
\end{align*}
Proceeding as we did before, we obtain the inequality 
\begin{align}
\label{inequality-for-Q-comp-2} 4Q_{ijk} \langle \hat A_{ij}, \nabla^\perp_k\nu_1\rangle &\leq 2a_1|\langle \nabla^\perp \hat A, \nu_1 \rangle|^2 +  \big(2a_2 + \frac{1}{2}a_3(1-3\varepsilon)\big)\frac{|\hat A|^2}{f}|\langle \nabla^\perp \mathring A, \nu_1\rangle|^2\\
\nonumber & \qquad +  \frac{2}{a_1} |\hat A|^2|\nabla^\perp \nu_1|^2 + \frac{2}{a_2} f|\nabla^\perp \nu_1|^2 + \frac{2}{a_3} |\mathring h|^2|\nabla^\perp \nu_1|^2. 
\end{align}
Set
\[
a_1 = 1, \quad a_2 = \frac{1}{2}, \quad  a_3 = 1. 
\]
In this case, 
\begin{align*}
2a_2 + \frac{1}{2}a_3 (1-3\varepsilon) &= \frac{3}{2}(1 - \varepsilon),\\
\frac{2}{a_2} &=  4\\
\frac{2}{a_3} &= 2. 
\end{align*}
Plugging these into \eqref{inequality-for-Q-comp-2}, we get
\begin{align*}
4Q_{ijk} \langle \hat A_{ij}, \nabla^\perp_k\nu_1\rangle &\leq 2|\langle \nabla^\perp \hat A, \nu_1 \rangle|^2 +(1-\varepsilon)\frac{3}{2}\frac{|\hat A|^2}{f}|\langle \nabla^\perp \mathring A, \nu_1\rangle|^2\\
& \qquad +  2|\hat A|^2|\nabla^\perp \nu_1|^2 +  4f|\nabla^\perp \nu_1|^2 + 2 |\mathring h|^2|\nabla^\perp \nu_1|^2, 
\end{align*}
as claimed. 
\end{proof}

Finally, we combine the conclusions of Lemmas \ref{bochA}, \ref{bochf}, and \ref{gradA} to get our desired estimate. 
\begin{lemma}[Gradient term estimate]\label{gradest}
Suppose either $n \geq 8$, $\frac{1}{n} < c_0 \leq \frac{4}{3n}$ and $0 < \delta \leq \frac{1}{5n-8}$; or  $\frac{1}{n} < c_0 \leq \frac{3(n+1)}{2n(n+2)} - \varepsilon_0$, and $0 < \delta \leq \min\{\frac{1}{2}, \frac{2n(n+2)}{3(n-1)} \varepsilon_0\}$. Then, in either case, 
\[
4Q_{ijk} \langle \hat A_{ij}, \nabla^\perp_k \nu_1 \rangle \leq  2|\nabla^\perp \hat A|^2 + 2(1-\delta)\frac{|\hat A|^2}{ f}(|\nabla^\perp A|^2-c_0|\nabla^\perp H|^2). 
\]
\end{lemma}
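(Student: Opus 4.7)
The plan is to combine the previous three lemmas and match terms carefully. The right-hand side of the desired inequality decomposes as follows: by \eqref{norm-of-derivative-of-hat-A}, we have $2|\nabla^\perp \hat A|^2 = 2|\hat\nabla^\perp \hat A|^2 + 2|\langle \nabla^\perp \hat A, \nu_1\rangle|^2$, so I would first peel off the term $2|\langle \nabla^\perp \hat A, \nu_1\rangle|^2$ — this exactly cancels the identical term appearing in both \eqref{estimate-for-gradient-term-1} and \eqref{estimate-for-gradient-term-2} of Lemma \ref{gradA}. It remains to show
\[
 \text{(remaining terms of } 4Q\langle \hat A, \nabla^\perp \nu_1\rangle) \;\leq\; 2|\hat\nabla^\perp \hat A|^2 + 2(1-\delta)\tfrac{|\hat A|^2}{f}(|\nabla^\perp A|^2 - c_0|\nabla^\perp H|^2).
\]

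I would then bound the right-hand side below using Lemma \ref{bochA} for $2|\hat\nabla^\perp \hat A|^2$ and Lemma \ref{bochf} for the $f$-gradient expression, producing a sum of four distinct nonnegative terms of the form $|\hat A|^2|\nabla^\perp\nu_1|^2$, $f|\nabla^\perp\nu_1|^2$, $|\mathring h|^2|\nabla^\perp\nu_1|^2$, and $\tfrac{|\hat A|^2}{f}|\langle \nabla^\perp \mathring A, \nu_1\rangle|^2$. Since Lemma \ref{gradA} presents its upper bound in exactly the same four terms, the claim reduces to four scalar inequalities — one per coefficient — which must hold under the hypotheses on $n$, $c_0$, and $\delta$.

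Checking the first case $c_0 \leq \tfrac{4}{3n}$ with $n \geq 8$: the coefficient inequality on $\tfrac{|\hat A|^2}{f}|\langle \nabla^\perp \mathring A, \nu_1\rangle|^2$ becomes $(1-\delta)\tfrac{5n-8}{3(n-1)} \geq \tfrac{5n-9}{3(n-1)}$, which forces $\delta \leq \tfrac{1}{5n-8}$ — precisely the hypothesis. The $f|\nabla^\perp\nu_1|^2$ coefficient inequality $\tfrac{6(n-1)}{n+2}(1-\delta) \geq \tfrac{3(n-1)}{n-3}$ is where $n \geq 8$ enters, since $\tfrac{2}{n+2} \geq \tfrac{1}{n-3}$ iff $n \geq 8$ and there is just enough slack to absorb the small factor $(1-\delta)$. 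The $|\mathring h|^2|\nabla^\perp\nu_1|^2$ and $|\hat A|^2|\nabla^\perp\nu_1|^2$ coefficient inequalities follow for $n \geq 8$ by direct algebra with similar room. The second case $c_0 \leq \tfrac{3(n+1)}{2n(n+2)} - \varepsilon_0$ with $\varepsilon = \tfrac{2n(n+2)}{3(n-1)}\varepsilon_0$ works the same way: the coefficient inequality on the $|\langle \nabla^\perp \mathring A, \nu_1\rangle|^2$ term reduces to $(1-\delta)\cdot \tfrac{3}{2} \geq (1-\varepsilon)\tfrac{3}{2}$, forcing $\delta \leq \varepsilon$, while the remaining three coefficient inequalities have ample slack for any $\delta \leq \tfrac{1}{2}$.

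The main obstacle is purely bookkeeping: one has to confirm, dimension by dimension, that the margins built into Lemmas \ref{bochA} and \ref{bochf} exceed the losses produced by the Cauchy–Schwarz and Young's inequality splittings chosen in the proof of Lemma \ref{gradA}. In particular, the critical dimension $n = 8$ in the first case and the critical pinching constant $c_0 = \tfrac{3(n+1)}{2n(n+2)}$ in the second case are the reason the hypotheses are stated with those specific thresholds; the restriction $\delta \leq \tfrac{1}{5n-8}$ is tight and is what will later determine the admissible exponent $\sigma$ in Theorem \ref{main}.
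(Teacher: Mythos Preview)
Your overall strategy matches the paper's exactly: decompose $2|\nabla^\perp \hat A|^2$ via \eqref{norm-of-derivative-of-hat-A}, cancel the $2|\langle \nabla^\perp \hat A, \nu_1\rangle|^2$ term against Lemma~\ref{gradA}, bound $2|\hat\nabla^\perp\hat A|^2$ below by Lemma~\ref{bochA}, bound the $(1-\delta)$-weighted Bochner term below by Lemma~\ref{bochf}, and then compare the four coefficients. However, your coefficient bookkeeping contains a real error that would make the argument fail as written.

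The $(1-\delta)$ factor attaches only to terms produced by Lemma~\ref{bochf}. Inspecting \eqref{lower-bochner-f-1} and \eqref{lower-bochner-f-2}, that lemma yields only two kinds of terms: $\tfrac{|\hat A|^2}{f}|\langle\nabla^\perp\mathring A,\nu_1\rangle|^2$ and $|\hat A|^2|\nabla^\perp\nu_1|^2$. The terms $f|\nabla^\perp\nu_1|^2$ and $|\mathring h|^2|\nabla^\perp\nu_1|^2$ come \emph{solely} from Lemma~\ref{bochA} and therefore carry no $(1-\delta)$. Consequently your stated inequality $\tfrac{6(n-1)}{n+2}(1-\delta) \geq \tfrac{3(n-1)}{n-3}$ is not the correct comparison; the actual one is $\tfrac{6(n-1)}{n+2} \geq \tfrac{3(n-1)}{n-3}$, which is an \emph{equality} at $n=8$. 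There is no slack at all, so your claim that ``there is just enough slack to absorb the small factor $(1-\delta)$'' is false --- with your placement of $(1-\delta)$ the inequality fails for every $\delta>0$ at $n=8$. The same issue recurs in the second case: the comparisons for $f|\nabla^\perp\nu_1|^2$ and $|\mathring h|^2|\nabla^\perp\nu_1|^2$ are $4\geq 4$ and $2\geq 2$, both exact, not ``ample slack.'' Once you track correctly which lemma each term originates from (so that $(1-\delta)$ sits only on the $\tfrac{|\hat A|^2}{f}|\langle\nabla^\perp\mathring A,\nu_1\rangle|^2$ and $|\hat A|^2|\nabla^\perp\nu_1|^2$ coefficients), the four comparisons go through exactly as in the paper.
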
 
\begin{proof}
First suppose $n \geq 8$,  $\frac{1}{n} < c_0 \leq \frac{4}{3n}$ and $0 < \delta \leq \frac{1}{5n-8}$. Expanding $|\nabla^\perp \hat A|^2$ using \eqref{norm-of-derivative-of-hat-A} and using the inequality \eqref{lower-bochner-A-1} in Lemma \ref{bochA} gives us 
\begin{align*}
2|\nabla^\perp \hat A|^2 & = 2|\hat \nabla^\perp \hat A|^2  + 2|\langle \nabla^\perp \hat A, \nu_1\rangle|^2 \\
& \geq 2|\langle \nabla^\perp \hat A, \nu_1\rangle|^2 + \frac{4n-10}{n+2}|\mathring h|^2|\nabla^\perp \nu_1|^2  +  \frac{6(n-1)}{n+2}|\hat A|^2|\nabla^\perp \nu_1|^2  +  \frac{6(n-1)}{n+2}f |\nabla^\perp \nu_1|^2.
\end{align*}
Multiplying inequality \eqref{lower-bochner-f-1} in Lemma \ref{bochf} by $(1-\delta)$ and using that $1-\delta \geq \frac{1}{2}$ on the coefficient of $|\hat A|^2 |\nabla^\perp \nu_1|^2$ gives 
\begin{align*}
2(1-\delta)\frac{|\hat A|^2}{f} \big(|\nabla^\perp A|^2 - c_0|\nabla^\perp H|^2) &\geq (1-\delta)\frac{5n-8}{3(n-1)}\frac{|\hat A|^2}{f}|\langle \nabla^\perp \mathring A, \nu_1 \rangle|^2 + \frac{5n - 8}{n+2} |\hat A|^2 |\nabla^\perp \nu_1|^2.
\end{align*}
Putting these together, we get 
\begin{align*}
2|\nabla^\perp \hat A|&^2 + 2(1-\delta)\frac{|\hat A|^2}{ f}(|\nabla^\perp A|^2-c_0|\nabla^\perp H|^2)   \\
& \geq 2|\langle \nabla^\perp \hat A, \nu_1\rangle|^2 +  (1-\delta)\frac{5n-8}{3(n-1)}\frac{|\hat A|^2}{f}|\langle \nabla^\perp \mathring A, \nu_1 \rangle|^2 \\
& \quad + \frac{11n - 14}{n+2}|\hat A|^2|\nabla^\perp \nu_1|^2  +  \frac{6(n-1)}{(n+2)} f|\nabla^\perp \nu_1|^2+  \frac{4n-10}{n+2}|\mathring h|^2|\nabla^\perp \nu_1|^2.
\end{align*}
On the other hand, the first estimate \eqref{estimate-for-gradient-term-1} of Lemma \ref{gradA} gives us that 
\begin{align*}
4Q_{ijk} \langle \hat A_{ij}, \nabla^\perp_k\nu_1\rangle &\leq 2|\langle \nabla^\perp \hat A, \nu_1 \rangle|^2 +   \frac{5n-9}{3(n-1)}\frac{|\hat A|^2}{f}|\langle \nabla^\perp \mathring A, \nu_1\rangle|^2\\
& \qquad +  2|\hat A|^2|\nabla^\perp \nu_1|^2 + \frac{3(n-1)}{n-3}f|\nabla^\perp \nu_1|^2 + \frac{2(n+2)}{n+3}  |\mathring h|^2|\nabla^\perp \nu_1|^2.
\end{align*}
Therefore, it only remains to compare the coefficients of like terms in the two inequalities above. For the coefficients of $\frac{|\hat A|^2}{f}|\langle\nabla^\perp \mathring A, \nu_1 \rangle|^2$, we need 
\[
\frac{5n-9}{3(n-1)} \leq (1-\delta)\frac{5n-8}{3(n-1)} \iff \delta \leq \frac{1}{5n - 8}.
\]
Comparing the coefficients of the remaining terms implies we need
\begin{align*}
2n + 4 &\leq 11n - 14 \iff 2 \leq n, \\
n+2 &\leq 2(n-3)\iff 8 \leq n,\\
2(n+2)^2 &\leq (4n-10)(n+3) \iff 19 \leq n(n-3).
\end{align*}
Each of these inequalities is true if $n \geq 8$ completing the proof for the first case. 

Now suppose $\frac{1}{n} < c_0 < \frac{3(n+1)}{2n(n+2)}$ and $0 < \delta < \min\{\frac{1}{2}, \frac{2n(n+2)}{3(n-1)} \varepsilon_0\}$.  Arguing as before, this time using   \eqref{lower-bochner-A-2} in Lemma \ref{bochA} and \eqref{lower-bochner-f-2} in Lemma \ref{bochf} yields
\begin{align*}
2|\nabla^\perp \hat A|&^2 + 2(1-\delta)\frac{|\hat A|^2}{ f}(|\nabla^\perp A|^2-c_0|\nabla^\perp H|^2)   \\
& \geq 2|\langle \nabla^\perp \hat A, \nu_1\rangle|^2 +  (1-\delta)\frac{3}{2}\frac{|\hat A|^2}{f}|\langle \nabla^\perp \mathring A, \nu_1 \rangle|^2 \\
& \quad + 7|\hat A|^2|\nabla^\perp \nu_1|^2  + 4 f |\nabla^\perp \nu_1|^2 + 2|\mathring h|^2|\nabla^\perp \nu_1|^2.
\end{align*}
Note we again used $\delta \leq \frac{1}{2}$ to simplify the coefficient of $|\hat A|^2 |\nabla^\perp \nu_1|^2$. On the other hand, by \eqref{estimate-for-gradient-term-2}, we have 
\begin{align*}
4Q_{ijk} \langle \hat A_{ij}, \nabla^\perp_k\nu_1\rangle &\leq 2|\langle \nabla^\perp \hat A, \nu_1 \rangle|^2 + (1-\varepsilon)\frac{3}{2}\frac{|\hat A|^2}{f}|\langle \nabla^\perp \mathring A, \nu_1\rangle|^2\\
& \qquad +  2|\hat A|^2|\nabla^\perp \nu_1|^2 +  4f|\nabla^\perp \nu_1|^2 + 2 |\mathring h|^2|\nabla^\perp \nu_1|^2,
\end{align*}
where recall $\varepsilon = \frac{2n(n+2)}{3(n-1)} \varepsilon_0$. By assumption, $\delta \leq \varepsilon$, and this completes the proof of the lemma. 
\end{proof}

\subsection{Concluding Argument} 
We now complete the proof of Theorem 1.1. Let $\delta$ be sufficiently small so that each of our above lemmas holds. Taking $\delta = \min\{\frac{1}{5n-8},\frac{2n(n+2)}{3(n-1)}\varepsilon_0\}$ suffices.  We begin by splitting off the desired nonpositive term in the evolution equation
 \begin{align}
 \nonumber \Big(\frac{\partial}{\partial t} - \Delta\Big)\frac{|\hat A|^2}{f} & = \frac{1}{f}\Big(\frac{\partial}{\partial t} - \Delta\Big)|\hat A|^2 - |\hat A|^2\frac{1}{ f^2}\Big(\frac{\partial}{\partial t} -\Delta\Big)f + 2 \Big\langle \nabla \frac{|\hat A|^2}{f}, \nabla \log  f \Big\rangle\\
\label{comp-final-1}  & = 2 \Big\langle \nabla \frac{|\hat A|^2}{f}, \nabla \log  f \Big\rangle - \delta \frac{|\hat A|^2}{ f^2}\Big(\frac{\partial}{\partial t} -\Delta\Big)f  \\
\nonumber & \qquad +  \frac{1}{f}\Big(\frac{\partial}{\partial t} - \Delta\Big)|\hat A|^2 - (1-\delta) \frac{|\hat A|^2}{ f^2}\Big(\frac{\partial}{\partial t} -\Delta\Big)f.
\end{align}
By the inequalities of Lemmas \ref{reactionest} and \ref{gradest}, the sum of terms on the second line of \eqref{comp-final-1} are nonpositive:
\begin{align*}
 \frac{1}{f}\Big(\frac{\partial}{\partial t} - \Delta\Big)|\hat A|^2& - (1-\delta) \frac{|\hat A|^2}{ f^2}\Big(\frac{\partial}{\partial t} -\Delta\Big)f \\
&  = \frac{1}{ f} \Big(2|\langle \hat A_{ij}, \hat A_{kl} \rangle|^2 + 2|\hat R^\perp|^2 + 2|R^\perp_{ij}(\nu_1)|^2 \Big)\\
& \quad - \frac{1}{f}\Big(2(1-\delta)\frac{|\hat A|^2}{ f} (c_0|\langle A_{ij}, H \rangle|^2 - |\langle A_{ij}, A_{kl} \rangle|^2 - |R^\perp_{ij}|^2)\Big)\\
& \quad + \frac{1}{ f} \Big(4Q_{ijk} \langle \hat A_{ij}, \nabla^\perp_k \nu_1 \rangle -  2|\nabla^\perp \hat A|^2 - 2(1-\delta)\frac{|\hat A|^2}{ f}(|\nabla^\perp A|^2-c_0|\nabla^\perp H|^2)\Big) \\
& \leq 0.
\end{align*}
Thus we have finally obtained our initial claim \eqref{goal} at the beginning of this section:
\begin{equation}
\label{goal-again}\Big(\frac{\partial}{\partial t} - \Delta\Big)\frac{|\hat A|^2}{f} \leq  2 \Big\langle \nabla \frac{|\hat A|^2}{f}, \nabla \log f\Big\rangle - \delta \frac{|\hat A|^2}{f^2} \Big(\frac{\partial}{\partial t} - \Delta \Big)f.
\end{equation}
Recall that $(\frac{\partial}{\partial t}-\Delta) f$ is nonnegative at each point in space-time.
Let $\sigma = \delta$. We compute that
\begin{align*}
\Big(\frac{\partial}{\partial t} - \Delta\Big)f^{1-\sigma} & = (1-\sigma) f^{-\sigma} \Big(\frac{\partial}{\partial t} - \Delta\Big)f + \sigma (1-\sigma)f^{-\sigma - 1} |\nabla f|^2\\
& \geq  (1-\sigma) f^{-\sigma} \Big(\frac{\partial}{\partial t} - \Delta\Big)f.
\end{align*} 
Then, making use of \eqref{useful-identity}, we have
\begin{align*}
\Big(\frac{\partial}{\partial t} - \Delta\Big)\frac{|\hat A|^2}{f^{1-\sigma}} & = \frac{1}{f^{1-\sigma}}\Big(\frac{\partial}{\partial t} - \Delta\Big)|\hat A|^2 - |\hat A|^2\frac{1}{ f^{2-2\sigma}}\Big(\frac{\partial}{\partial t} -\Delta\Big)f^{1-\sigma} + 2 \Big\langle \nabla \frac{|\hat A|^2}{f^{1-\sigma}}, \nabla \log  f^{1-\sigma} \Big\rangle\\
&\leq   \frac{1}{f^{1-\sigma}}\Big(\frac{\partial}{\partial t} - \Delta\Big)|\hat A|^2 - |\hat A|^2\frac{1}{ f^{2-2\sigma}}(1-\sigma) f^{-\sigma} \Big(\frac{\partial}{\partial t} -\Delta\Big)f \\
&  \qquad + 2 \Big\langle \nabla \frac{|\hat A|^2}{f^{1-\sigma}}, \nabla \log  f^{1-\sigma} \Big\rangle\\ 
& = f^{\sigma}\Big(\frac{1}{f}\Big(\frac{\partial}{\partial t} - \Delta\Big)|\hat A|^2 - \frac{|\hat A|^2}{f^2}\Big(\frac{\partial}{\partial t} -\Delta\Big)f\Big) +\sigma \frac{|\hat A|^2}{f^2}  f^{\sigma}\Big(\frac{\partial}{\partial t} -\Delta\Big)f\\
& \qquad + 2 \Big\langle \nabla \frac{|\hat A|^2}{f^{1-\sigma}}, \nabla \log  f^{1-\sigma} \Big\rangle.
\end{align*}
Now again by \eqref{useful-identity} and \eqref{goal-again}
\begin{align*}
\frac{1}{f}\Big(\frac{\partial}{\partial t} - \Delta\Big)|\hat A|^2 - \frac{|\hat A|^2}{f^2}\Big(\frac{\partial}{\partial t} -\Delta\Big)f  & = \Big(\frac{\partial}{\partial t} - \Delta \Big)\frac{|\hat A|^2}{f} - 2\Big\langle \nabla \frac{|\hat A|^2}{f}, \nabla \log f \Big\rangle \\
& \leq -\delta \frac{|\hat A|^2}{f^2} \Big(\frac{\partial}{\partial t} - \Delta\Big)f.
\end{align*}
Therefore, since $\sigma = \delta$,
\begin{align*}
\Big(\frac{\partial}{\partial t} - \Delta\Big)\frac{|\hat A|^2}{f^{1-\sigma}} & \leq- \delta \frac{|\hat A|^2}{f^2} f^{\sigma}  \Big(\frac{\partial}{\partial t} - \Delta \Big)f +\sigma \frac{|\hat A|^2}{f^2}  f^{\sigma}\Big(\frac{\partial}{\partial t} -\Delta\Big)f   \\
& \qquad + 2 \Big\langle \nabla \frac{|\hat A|^2}{f^{1-\sigma}}, \nabla \log  f^{1-\sigma} \Big\rangle\\
& = 2 \Big\langle \nabla \frac{|\hat A|^2}{f^{1-\sigma}}, \nabla \log  f^{1-\sigma} \Big\rangle.
 \end{align*}
Hence by the maximum principle, there exists a constant $C$ depending only upon the initial manifold $M_0$ such that $|\hat A|^2 \leq Cf^{1-\sigma}$ on $M \times [0, T)$. Since $f \leq c_0 |H|^2$, this implies $|\hat A|^2 \leq  C |H|^{2-2\sigma}$. This completes the proof of Theorem \ref{main}.

\sc{Department of Mathematics, Columbia University, New York, NY 10027}

\end{document}